\def\mini{\mathrm{min}}
\def\special{\mathrm{sp}}
\newcommand{\mg}{\mathfrak{g}}
\newcommand{\0}{\mathcal O}
\newcommand{\OC}{{\mathcal{O}}}
\newcommand{\C}{\mathbb{C}}
\DeclareMathOperator{\codim}{codim}
\DeclareMathOperator{\Or}{O}
    \def\OC{{\mathcal{O}}}
\def\PB{{\mathbf P}}
  \def\cg{{\mathfrak c}}
  \def\gg{{\mathfrak g}}  
  \def\hg{{\mathfrak h}}
  \def\mg{{\mathfrak m}}
\def\SG{{\mathfrak S}}  \def\sg{{\mathfrak s}}  
  \def\tg{{\mathfrak t}}
  \def\zg{{\mathfrak z}}  
 \DeclareMathOperator{\ad}{ad}
\newcommand{\nilrad}{{\mathfrak n}}
\newcommand{\nilcone}{{\mathcal N}}
\newcommand{\parti}{\mathcal P}
\newcommand{\g}{{\mathfrak g}}
\DeclareMathOperator{\SL}{SL}
\DeclareMathOperator{\Sp}{Sp}
\DeclareMathOperator{\SO}{SO}
\DeclareMathOperator{\so}{\mathfrak{so}}
\newcommand{\diag}{\rm diag}     
\newcommand{\tr}{\rm tr}     
\newcommand{\Se}{\mathcal{S}_e}
\newcommand{\SOe}{\mathcal{S}_{\0,e}}
 \newcommand{\renc}{\renewcommand}
\renc{\sl}{{\mathfrak{sl}}}
\renc{\sp}{{\mathfrak{sp}}}   
\newtheorem{theorem}{Theorem}[section]
\newtheorem{lemma}[theorem]{Lemma}
\newtheorem{proposition}[theorem]{Proposition}
\newtheorem{corollary}[theorem]{Corollary}
\theoremstyle{definition}
\theoremstyle{remark}
\newtheorem{remark}[theorem]{Remark}
\definecolor{myred}{rgb}{0.75,0,0}
\definecolor{mygreen}{rgb}{0,0.5,0}
\definecolor{myblue}{rgb}{0,0,0.65}
\begin{document}
	
	\title{Minimal special degenerations and duality}
	
	\author{Daniel Juteau, Paul Levy and Eric Sommers}
	
	
	\date{\today}
	
\begin{abstract}
		This paper includes the classification, in a simple Lie algebra $\gg$, of the singularities of Slodowy slices 
		between special nilpotent orbits that are adjacent in the partial order on nilpotent orbits.  
		The irreducible components of most singularities are (up to normalization) either 
		a simple surface singularity or the closure of a minimal special nilpotent orbit 
		in a smaller rank Lie algebra.   
		Besides those cases, there are some 
		exceptional cases that arise as quotients of the closure of a minimal  orbit in types $D_n$ by $V_4$, in type $A_2$ 
		by $\SG_2$ or in type $D_4$ by $\SG_4$.
		We also consider the action on the slice of the fundamental group of the smaller orbit.  With this action,
		we observe that under Lusztig-Spaltenstein duality, in most cases, a singularity of 
		simple surface singularity is interchanged with the closure of a minimal special orbit of Langlands dual type (or a cover of it with action).  Lusztig's canonical quotient helps explain when this duality fails.
		This empirical observation generalizes an observation of Kraft and Procesi in type $A_n$, where all nilpotent orbits are special.  We also resolve a conjecture of Lusztig that concerns the intersection cohomology of slices between special nilpotent orbits.
\end{abstract}
	
	\maketitle

\section{Introduction}

\subsection{Minimal degenerations} Let $G$ be a simple algebraic group over $\mathbb C$ and $\g$ its Lie algebra.  Let $\nilcone_{o}:={\mathcal N}(\g)/G$ be the set of nilpotent orbits in $\g$.  The partial order on $\nilcone_{o}$ is defined so that $\0'{<}\0$ whenever $\0' \subsetneq \overline{\0}$ for $\0,\0 \in \nilcone_{o}$, 
where $\overline{\0}$ is the closure of $\0$.  A pair $\0'{<}\0$  is called a {\it degeneration}.  If $\0$ and $\0'$ are adjacent in the partial order
(that is, there is no orbit strictly between them), then the pair is called a {\it minimal degeneration}.
There are two minimal degenerations at either extreme of the poset $\nilcone_{o}$:  the regular and subregular
nilpotent orbits give a minimal degeneration, as does the minimal nilpotent orbit and the zero orbit. 

Given $e \in \nilcone_{o}$, let $\sg \subset \g$ be an $\sl_2$-triple $\{e,h,f\}$ through $e$.   Then 
$\Se:= e + \g^f$, where $\g^f$ is the centralizer of $f$ in $\g$, is called a Slodowy slice.
Associated to any degeneration $\0'{<}\0$ is a smooth equivalence class of singularities ${\rm Sing}(\0,\0')$ \cite{Kraft-Procesi:classical}, which can be represented by the intersection $\SOe:={\mathcal S}_e\cap\overline\0$, where $e \in \0'$.
We call $\SOe$ a {\it Slodowy slice singularity}.

The singularities $\SOe$ of minimal degenerations are known in the classical types by \cite{Kraft-Procesi:GLn} and \cite{Kraft-Procesi:classical} and in the exceptional types by \cite{Fu-Juteau-Levy-Sommers:GeomSP} and \cite{FJLS}, up to normalization for a few cases in $E_7$ and $E_8$.  
These results can be summarized as:
\begin{itemize}
	\item the irreducible components of $\SOe$ are pairwise isomorphic;
	\item if $\dim(\SOe)=2$, then the normalization of an irreducible component of $\SOe$ is isomorphic to 
	$\C^2/\Gamma$ where $\Gamma \subset \SL_2(\C)$ is a finite subgroup, possibly trivial.  
	Such a variety is called a {\it simple surface singularity} when $\Gamma$ is non-trivial. 
	\item if $\dim(\SOe)\geq 4$, then an irreducible component of $\SOe$ is isomorphic to the closure of a minimal nilpotent orbit in some simple Lie algebra, or else is one of four exceptional cases, denoted $m'$, $\tau$, $\chi$, or $a_2/\SG_2$ in 
	\cite{FJLS} and each appearing exactly one time. 
\end{itemize}

\subsection{Action on slices}\label{intro:action on slices} A simple surface singularity $X=\C^2/\Gamma$ corresponds to the Dynkin diagram of a simply-laced Lie algebra (e.g., $A_n$, $D_n$, $E_n$) either by using the irreducible representations of $\Gamma$ as done by McKay, or by looking at the exceptional fiber of the minimal resolution of $X$, which is union of projective lines, whose arrangement yields the Dynkin diagram.   Slodowy defined an action on $X$ by using a normalizing subgroup $\Gamma'$ of $\Gamma$ in $\SL_2(\C)$ \cite[III.6]{Slodowy:book}.
Looking at the image of the action of $\Gamma'$ on the Dynkin diagram, he introduced the notation 
$B_n$ (resp. $C_n$, $F_4$, $G_2$) to denote a simple surface singularity $A_{2n-1}$ (resp. $D_{n+1}$, $E_6$, $D_4$) singularity with an ``outer'' action of $\mathfrak{S}_2$ (resp. $\mathfrak{S}_2$, $\mathfrak{S}_2$, $\mathfrak{S}_3$).  Here, ``outer" refers to the fact that on the corresponding Lie algebra these come from outer automorphisms.
It is also possible to do the same thing for the simple surface singularity $A_{2n}$, where we 
used the notation $A_{2n}^+$ in \cite{FJLS}, when the outer action is included.
Note, however, that this arises from a cyclic group of order four acting on $X$.  

The centralizer $G^e$ of $e$ in $G$ has a reductive part $C(\sg)$, given by the centralizer of $\sg$ in $G$.  
Then $C(\sg)$ acts on $\SOe$ and we are interested in the image of $C(\sg)$ in $\rm Aut(\SOe)$.
Slodowy \cite[IV.8]{Slodowy:book} showed for the regular/subregular minimal degeneration, that $\SOe$ with the action induced from $C(\sg)$ is
exactly the simple surface singularity denoted by the type of $\g$.  This explains his
choice of notation.  

Let $a_n, b_n, \dots, g_2$ denote the closure of the minimal nilpotent orbit according to the type of $\g$.
In \cite{FJLS}, we introduced the notation 
$a_n^+$, $d_n^+$, $e_6^+$, $d_4^{++}$ to denote these varieties with the outer action of 
$\mathfrak{S}_2$, $\mathfrak{S}_2$, $\mathfrak{S}_2$, $\mathfrak{S}_3$, respectively, coming from the outer automorphisms of $\g$.
In {\it op.\!\! cit.}, using these two notions of action, we studied the action of $C(\sg)$ on $\SOe$ for all minimal degenerations, where we found that $C(\sg)$ acts transitively on the irreducible components of $\SOe$ and in some
sense acts as non-trivially as possible on $\SOe$ given the size of
the component group $A(e):=C(\sg)/C^\circ(\sg)$.
 In this paper one of our results is to repeat this calculation for the classical groups (see \S \ref{section:A_action}).


\subsection{Minimal Special Degenerations}\label{subsect:min_sp_intro} Lusztig defined the notion of special representations of the Weyl group $W$ of $G$ \cite{Lusztig:special}, which led him to define the special nilpotent orbits, denoted $\nilcone^{sp}_{o}$, via the Springer correspondence.
The regular, subregular, and zero nilpotent orbits are always special, but the minimal nilpotent orbit is only special 
when $\g$ is simply-laced (types $A_n$, $D_n$, or $E_n$).   In the other types, there is always a unique minimal (nonzero) special nilpotent orbit.   
We denote the closure of the minimal special nilpotent orbits (which are not minimal nilpotent) by 
$b_n^{sp}, c_n^{sp}, f_4^{sp},$ and $g_2^{sp}$, according to the type of $\g$.

In this paper, we classify the Slodowy slice singularities $\SOe$ when $\0$ and $\0'$ are adjacent special orbits, i.e., there is no special orbit strictly between them.    We call these {\it minimal special degenerations}.
Since $\dim(\SOe)=2$ implies the degeneration is already a minimal degeneration, we are left only 
to classify the cases where  $\dim(\SOe)\geq4$.  Our main result on the classification of 
{\bf minimal special degenerations} is summarized as:
\begin{itemize}
	\item the irreducible components of $\SOe$ are pairwise isomorphic;
	\item if $\dim(\SOe)=2$, then the normalization of an irreducible component of $\SOe$ is isomorphic to 
	$\C^2/\Gamma$ where $\Gamma \subset \SL_2(\C)$ is a finite, non-trivial subgroup.
	\item if $\dim(\SOe)\geq 4$, then an irreducible component of $\SOe$ is isomorphic to the closure of a minimal special nilpotent orbit in some simple Lie algebra, or else is isomorphic to one of the following quotients of the closure of a minimal (special) nilpotent orbit:  $a_2/\SG_2$, $d_{n+1}/V_4$ or $d_4/\SG_4$.
\end{itemize}

The singularities $a_2/\SG_2$ and $d_4/\SG_4$ arose in \cite{Fu-Juteau-Levy-Sommers:GeomSP} and 
along with  $d_{n+1}/V_4$, they also appear in the physics literature \cite{Hanany:Coulomb_branches}.

In the case where $\dim(\SOe) \geq 4$, 
the singularities of $\SOe$ are mostly controlled by the simple factors of $\cg(\sg)$ (see Corollary \ref{Corollary:simple_factors_control_singularity} and the remarks after it), just as occurs for most of the minimal degenerations of dimension four or more.

For dimension two, there is a single slice where one of its irreducible components 
is known not to be normal, namely the $\mu$ singularity from \cite{Fu-Juteau-Levy-Sommers:GeomSP}, which occurs once in $E_8$ (it is irreducible).  
We expect the other components of slices of dimension two all to be normal in the case of minimal special degenerations, unlike the case of minimal degenerations.  The components of slices of dimension at least four are all known to be normal.  

The irreducible minimal special degenerations in the classical types $B$, $C$, $D$, 
are listed in Tables \ref{codim2} and \ref{codim4ormore},
in analogy with the classification of Kraft and Procesi for minimal degenerations \cite[Table 1]{Kraft-Procesi:classical}.
The minimal special degenerations of codimension two are already  minimal degenerations and so are contained in \cite{Kraft-Procesi:classical}, except for the action of $A(e)$.  
The notation of $[2B_n]^+,$ means that the image of $C(\sg)$ acts by a Klein $4$-group $V_4$ on $\SOe$,
where one generator switches the two components of the exceptional fiber and a second generator preserves both components of type $A_{2n-1}$, but acts by outer automorphism on each one.  The table assumes that $G$ is 
the orthogonal group $\rm O(2n)$ for type $D_n$, hence making use of the outer $\SG_2$-action of $D_n$.  

In type $D_{n}$ without this outer action, we would 
get these same singularities but without some or all of the action on $\SOe$.
Specifically,  $D_k$ and $d_k$ arise, without the $\SG_2$-action.  
The singularity $[2B_k]^+$ will become $B_k$ for the minimal degenerations
where $\0$ is a very even orbit. 
We discuss this further in \S \ref{Section:outer_autos}.

\begin{table}[htp]  \label{min_special_degens}
	\caption{Minimal special degenerations of codimension two}
	\begin{center}
		\begin{tabular}{|c|c|c|c|c|c|}
			\hline
		    Name of singularity &  $a$ & $b$ & $c$ & $d$ & $e$ \\
			Lie algebra & $\mathfrak{sp}_{2}$ & $\mathfrak{sp}_{2n}$  &  $\mathfrak{so}_{2n+1}$  & $\mathfrak{sp}_{4n+2}$     & $\mathfrak{so}_{4n}$  \\
			&   &$n\geq 2$   & $n \geq 1$ &  $n \geq 1$ &   $n \geq 1$ \\
			$l$ rows removed   & $l  \equiv \epsilon'$ & any & $l  \not \equiv \epsilon'$  &$l  \equiv \epsilon'$ & $l  \equiv \epsilon'$   \\
			$s$ columns removed& $s \not\equiv \epsilon$ & $s \not \equiv \epsilon$ & $s \equiv \epsilon$  & $s \not\equiv \epsilon$ & $s \equiv \epsilon$   \\
			\hline
			$\lambda$ & $[2]$  & $[2n]$  & $[2n{+}1]$   &  $[2n{+}1, 2n{+}1]$   &  $[2n,2n]$    \\
			$\mu$ & $[1,1]$ & $[2n{-}2,2]$ & $[2n{-}1,1,1]$   &  $[2n,2n,2]$  & $[2n{-}1,2n{-}1,1,1]$    \\
			
			\hline
			Singularity &  $C_1$& $C_n$  &   $B_n$  &  $B_n$ & $[2B_n]^+$ \\
			\hline
		\end{tabular} \label{codim2}
	\end{center}
\end{table}

\begin{table}[htp]  \label{min_special_degens_codim_2}
	\caption{Minimal Special Degenerations of codimension 4 or more}
	\begin{center}
		\begin{tabular}{|c|c|c|c|c|c|}
			\hline
			Name of singularity  & $g_{sp}$  & $h$ & $f^1_{sp}$ & $f^2_{sp}$  & $h_{sp}$   \\
			Lie algebra & $\mathfrak{sp}_{2n}$ & $\mathfrak{so}_{2n}$   & $\mathfrak{so}_{2n+1}$     & $\mathfrak{sp}_{4n+2}$ &  $\mathfrak{sp}_{4n}$  \\
			& $n\geq 2$   &$n\geq 3$   &$n\geq2$ & $n\geq2$ &$n\geq2$  \\
			$l$ rows removed  & $l  \equiv \epsilon'$  & $l   \equiv \epsilon'$  &$l  \not \equiv \epsilon'$ & $l  \equiv \epsilon'$  & $l  \not \equiv \epsilon'$  \\
			$s$ columns removed & $s \not\equiv \epsilon$ & $s \equiv \epsilon$   & $s \equiv \epsilon$ & $s \not \equiv \epsilon$ & $s \not \equiv \epsilon$  \\
			\hline
			$\lambda$  & $[2^2,1^{2n-4}]$  & $[2^2,1^{2n-4}]$  &  $[3, 1^{2n-2}]$   &  $[3^2,2^{2n-2}]$    & $[4,2^{2n-2}]$   \\
			$\mu$ & $[1^{2n}]$ & $[1^{2n}]$   &  $[1^{2n+1}]$  & $[2^{2n+1}]$  & $[2^{2n}]$   \\
			codimension & $4n\!-\!2$ & $4n\!-\!6$  & $4n\!-\!2$ & $4n\!-\!2$ & $4n\!-\!2$  \\
			Singularity &  $c^{sp}_n$ & $d_n^{+} $  &  $b^{sp}_n$ & $b^{sp}_n$  &   $d_{n+1}/V_4 $ \\
			\hline		
		\end{tabular} \label{codim4ormore}
	\end{center}
	\label{default}
\end{table}%

\begin{remark}
	The $h$ singularity for $n=2$ is $d_2^+$, which coincides with the $e$ singularity for $n=1$.  We use $d_2^+$ in the graphs for the classical groups since the action of $A(e)$ for the $e$-singularity with $n=1$ is actually only by $\SG_2$.
\end{remark}

The proof that these tables give the classification of minimal special degenerations is given in \S \ref{combinatorial_classification}.  
In \S \ref{codim4_degs}, we establish that the singularities in classical types are as given in Table \ref{codim4ormore},
and in \S \ref{Section:min_special_deg} we complete the story in the exceptional groups.   
In \S \ref{subsect:defn_A_action} and \S\ref{Section:action_codim4_more}, 
we establish the $A(e)$-action both for minimal special degenerations and minimal degenerations.
The graphs at the end of the paper give the results for the exceptional groups and several examples in the classical groups \S \ref{Section:graphs}.

\subsection{Duality}
Using the Springer correspondence, Lusztig defined two maps, which are order-reversing involutions:
$d: \nilcone_{o}^{sp} \to \nilcone_{o}^{sp}$ and $d_{LS}: \nilcone_{o}^{sp} \to {^L} \nilcone_{o}^{sp}$
(see \cite{Carter}).

For $G = GL_n$ all nilpotent orbits are special 
and Kraft and Procesi \cite{Kraft-Procesi:GLn} computed the singularity type of 
$\SOe$ for minimal degenerations (hence, minimal special degenerations).  The singularity 
is either of type $A_k$ or $a_k$ for some $k$.  
Kraft and Procesi observed that if 
the singularity of $(\0, \0')$ is of type $A_k$ then the singularity
of $(d(\0'), d(\0))$ is of type $a_{k}$.   In the case of $GL_n$,
each orbit is given by a partition and the dualities $d=d_{LS}$ are given by taking the transpose partition.

Our duality is a generalization of the Kraft-Procesi observation, but with some wrinkles.
It says that typically an irreducible component of a simple surface singularity (with $A(e)$-action) is interchanged with 
the minimal special orbit of Langlands dual type (after taking the quotient of the $A(e)$-action).
More explicitly, $d_{LS}$ exchanges the following singularities.
\begin{eqnarray*}\label{eqn:main_interchange}
A_n    & \leftrightarrow& a_n \\
B_n      & \leftrightarrow&  a_{2n-1}^+ \text{ or } c_n^{sp} \\  
C_n      & \leftrightarrow&  d_{n+1}^+ \text{ or } b_n^{sp} \\
D_n    & \leftrightarrow & d_n \\
G_2   & \leftrightarrow & d_4^{++} \text{ or } g_2^{sp} \\
F_4   & \leftrightarrow & e_6^{+}  \text{ or } f_4^{sp} \\
E_n     & \leftrightarrow & e_n  
\end{eqnarray*}
The only interchange of dimension two with dimension two is when 
both slices have irreducible components of type $A_1$.  The fact that for each dual pair of orbits, one of the pairs yields a slice of dimension two was observed by Lusztig \cite{Lusztig:adjacency}. 
For the cases with two options on the right, notice that the first option arises as cover of the second (see e.g. \cite{Fu-Juteau-Levy-Sommers:GeomSP}).  Indeed we expect this cover to occur intrinsically since in all these cases $\0$ itself admits such a cover. 
We could also alternatively say that the second option is a quotient of the first by the $A(e)$-action.

There are three families of situations that do not obey this relationship. 
\begin{enumerate}
	\item Sometimes $$C_{n+1}  \leftrightarrow  c_n^{sp} \text{ or } a_{2n-1}^+$$ 
	\item  When  $d_{n+1}/V_4$ or $d_4/\SG_4$ occurs in a dual pair of orbits, we always have 
	\begin{eqnarray*}
		C_n &\leftrightarrow & d_{n+1}/V_4 \\
		G_2 &\leftrightarrow & d_4/S_4 
	\end{eqnarray*}
	\item For the three exceptional special orbits in $E_7$ and $E_8$, 
	\begin{eqnarray*}
		A_2^+    & \leftrightarrow& a_2^+  \text{ or } a_2/S_2\\
		A_4^+      & \leftrightarrow&  a_{4}^+  
	\end{eqnarray*}
\end{enumerate}


In the first case, Lusztig's canonical quotient of $A(e)$ is playing a role.  Namely, the kernel of the map from $A(e)$ to 
the canonical quotient $\bar{A}(e)$ is acting by outer action on $\SOe$.  We denote this property by adding a $*$ to the singularity, $C_{n+1}^*$.  This phenomenon is described in \S \ref{canonical_quotient}.
In the second case, 
there is an impact of the canonical quotient, see again  \S \ref{canonical_quotient}.
In the third case, these cases arise because the only representative of an order two element in $A(e)$ is 
an order $4$ element in $C(\sg)$ 
(see \cite{FJLS}).
We gather the duality results into one theorem in \S \ref{Section:duality_theorem}.


\subsection{Full automorphism group of $\g$}

We also consider, building on work of Slodowy, the case where $G = \rm Aut(\g)$.  For $A_n, E_6$, and $D_4$, we leave this for \S \ref{Section:outer_autos}.   We find that in type $A_n$, all singularities acquire the expected outer action and thus, for example, $A_k^+ \leftrightarrow a_k^+$ for the full automorphism group of $\g$.

To get more uniform statements for type $D_n$, we use $G=\rm O(2n)$ at the beginning and then explain what changes when $G = \SO(2n)$ in \S \ref{subsect:A_action_spec_orth} and \S \ref{subsection:A_action_h_sing}.

\subsection{Three quartets of singularities in classical types $B$, $C$, $D$}

The duality of the last section has a richer structure in types $B$, $C$, $D$.
The internal duality $d$ for $B_n$ and $C_n$, 
together with $d_{LS}$ and the composition $f:=d \circ d_{LS}$, yield $4$ related special orbits (see Figure \ref{dual_squares}).
Applying these $3$ maps to a minimal special degeneration, we find there are only three possible outputs for the four singularities that arise (see Figure \ref{quartet}).

There is also a story that involves $D_n$.  As mentioned above, we work with $G=\rm O(2n)$.  Then there is a subset of the nilpotent orbits in type $C_n$ that is a slight modification of the special orbits, by changing the parity condition in the definition.  
We call these the alternative special nilpotent orbits in type $C$ and denote them by 
$\mathcal N_o^{C,asp}$ in \S \ref{Subsect:special_partitions}.
Its minimal element is the minimal orbit in type $C_n$ of dimension $2n$.  There is a
bijection between $\mathcal N^{D,sp}_o$ and $\mathcal N_o^{C,asp}$, also denoted $f$, 
that preserves the partial order and codimensions (more precisely, it sends an orbit of dimension $N$ to one of dimension $N+2n$).  
This bijection, together with $d_{LS}$ and $d=f \circ d_{LS}$, also gives rise to the same three quartets of singularities
as in Figure \ref{quartet}.
An example is given in Figure \ref{Nilcone_Alt_special_for_C}.
This is also the first case where all three quartets arise.   


\begin{figure}[H]\caption{Dualities}\label{dual_squares}
	\hspace*{-0.2\linewidth}
	\begin{tikzpicture}
		\begin{scope}[shift={(-4,0)}]
			\matrix (BC_interchange)
			[matrix of math nodes,
			nodes in empty cells,
			nodes={outer sep=0pt,minimum size=0pt},
			column sep={1.7cm,between origins},
			row sep={1.5cm,between origins}]
			{
				&& | (a) |  \mathcal N^{B,sp}_o  && | (b) | \mathcal N^{C,sp}_o      \\
				&& | (c) |   \mathcal N^{B,sp}_o  && | (d) | \mathcal N^{C,sp}_o   \\
			};
			
			\draw[to-to] (a) -- (b) node[midway, above] {$f$};
			\draw[to-to] (c) -- (d) node[midway, below] {$f$};
			\draw[to-to] (a) -- (c) node[midway, left]{$d$};  
			\draw[to-to] (b) -- (d) node[midway, left]{$d$};  
			\draw[to-to,dash=on 5pt off 5pt phase  0pt]  (a) -- (d) node[above=.35cm, left =.9cm,scale=.8]{$d_{LS}$};  
			\draw[to-to,dash=on 5pt off 5pt phase  0pt]  (b) -- (c) node[above=.35cm, right =.9cm,scale=.8]{$d_{LS}$};  
		\end{scope}

	\begin{scope}[shift={(4,0)}]
	\matrix (BC_interchange)
	[matrix of math nodes,
	nodes in empty cells,
	nodes={outer sep=0pt,minimum size=0pt},
	column sep={1.7cm,between origins},
	row sep={1.5cm,between origins}]
	{
		&& | (a) |  \mathcal N^{D,sp}_o  && | (b) | \mathcal N^{C,asp}_o      \\
		&& | (c) |  \mathcal N^{C,asp}_o &&   | (d) | \mathcal N^{D,sp}_o   \\
	};
	
	\draw[to-to] (a) -- (b) node[midway, above] {$f$};
	\draw[to-to]  (c) -- (d) node[midway, below] {$f$};
	\draw[to-to] (a) -- (c) node[midway, left]{$d$};  
	\draw[to-to] (b) -- (d) node[midway, left]{$d$};  
	\draw[to-to,dash=on 5pt off 5pt phase  0pt]  (a) -- (d) node[above=.35cm, left =.9cm,scale=.8]{$d_{LS}$};  
	\draw[to-to,dash=on 5pt off 5pt phase  0pt]  (b) -- (c) node[above=.35cm, right =.9cm,scale=.8]{$d_{LS}$};  
\end{scope}
	\end{tikzpicture}
\end{figure}
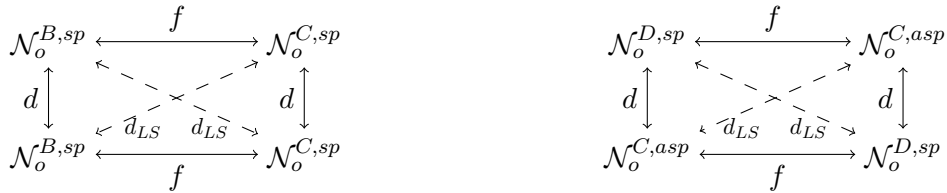

\begin{figure}[H]\caption{The three quartets of possible singularities in classical groups}\label{quartet}
	\hspace*{-0.2\linewidth}
	\begin{tikzpicture}
		\begin{scope}[shift={(-6,0)}]
			\matrix (BC_interchange)
			[matrix of math nodes,
			nodes in empty cells,
			nodes={outer sep=0pt,minimum size=0pt},
			column sep={1.7cm,between origins},
			row sep={1.5cm,between origins}]
			{
				&& | (a) | C_n  && | (b) |B_n    \  \\
				&& | (c) |  c^{\special}_n && | (d) | b^{\special}_n  \\
			};
			
				\draw[to-to] (a) -- (b) node[midway, above] {$f$};
		\draw[to-to] (c) -- (d) node[midway, below] {$f$};
		\draw[to-to] (a) -- (c) node[midway, left]{$d$};  
		\draw[to-to] (b) -- (d) node[midway, left]{$d$};  
		\draw[to-to,dash=on 5pt off 5pt phase  0pt]  (a) -- (d) node[above=.35cm, left =.9cm,scale=.8]{$d_{LS}$};  
		\draw[to-to,dash=on 5pt off 5pt phase  0pt]  (b) -- (c) node[above=.35cm, right =.9cm,scale=.8]{$d_{LS}$};  
		\end{scope}
		
		\begin{scope}[shift={(0,0)}]
			\matrix (BC2_interchange)
			[matrix of math nodes,
			nodes in empty cells,
			nodes={outer sep=0pt,minimum size=0pt},
			column sep={1.7cm,between origins},
			row sep={1.5cm,between origins}]
			{
				&& | (a) | C_{n}  && | (b) |   C_{n+1}^*  \\
				&& | (c) |   c^{\special}_{n}  && | (d) |d^+_{n+1} \\
			};
			
		\draw[to-to] (a) -- (b) node[midway, above] {$f$};
\draw[to-to] (c) -- (d) node[midway, below] {$f$};
\draw[to-to] (a) -- (c) node[midway, left]{$d$};  
\draw[to-to] (b) -- (d) node[midway, left]{$d$};  
\draw[to-to,dash=on 5pt off 5pt phase  0pt]  (a) -- (d) node[above=.35cm, left =.9cm,scale=.8]{$d_{LS}$};  
\draw[to-to,dash=on 5pt off 5pt phase  0pt]  (b) -- (c) node[above=.35cm, right =.9cm,scale=.8]{$d_{LS}$};  
		\end{scope}
		
		\begin{scope}[shift={(6,0)}]
			\matrix (BC3_interchange)
			[matrix of math nodes,
			nodes in empty cells,
			nodes={outer sep=0pt,minimum size=0pt},
			column sep={1.7cm,between origins},
			row sep={1.5cm,between origins}]
			{
				&& | (a) |   C_{n}   && | (b) |   [2B_{n}]^+ \\
				&& | (c) |  c^{\special}_{n}     && | (d) |  d_{n+1}/V_4  \\
			};
			
			\draw[to-to] (a) -- (b) node[midway, above] {$f$};
	\draw[to-to] (c) -- (d) node[midway, below] {$f$};
	\draw[to-to] (a) -- (c) node[midway, left]{$d$};  
	\draw[to-to] (b) -- (d) node[midway, left]{$d$};  
	\draw[to-to,dash=on 5pt off 5pt phase  0pt]  (a) -- (d) node[above=.35cm, left =.9cm,scale=.8]{$d_{LS}$};  
	\draw[to-to,dash=on 5pt off 5pt phase  0pt]  (b) -- (c) node[above=.35cm, right =.9cm,scale=.8]{$d_{LS}$};  
		\end{scope}
		
	\end{tikzpicture}
\end{figure}
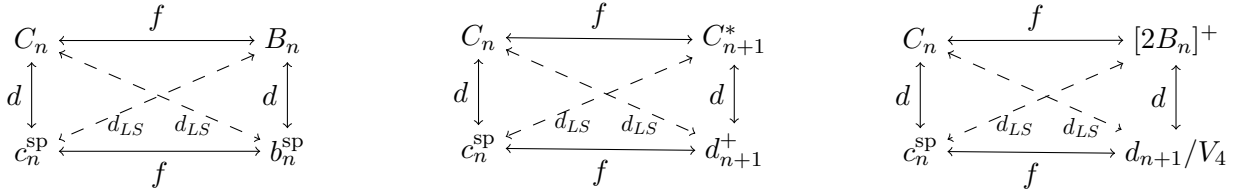

\begin{figure}[H]\caption{Duality between $\parti_D^{sp}(10)$ and $\parti_C^{asp}(10)$ } \label{Nilcone_Alt_special_for_C}
	\begin{tikzpicture}
		\hspace*{-0.15\linewidth}
		\begin{scope}[shift={(-4,0)}]
			\matrix (D4special)
			[matrix of math nodes,
			nodes in empty cells,
			nodes={outer sep=0pt,minimum size=0pt},
			column sep={1.8cm,between origins},
			row sep={1cm,between origins}]
			{
				&&& | (1) | [7,1]  \\
				&&& | (2) |[5,3] \\
				&& | (3) |[4^2]&& | (4) |[5,1^3] \\
				&&& | (5) |[3^2,1^2]  \\
				&& | (6) |[2^4] && | (7) |[3,1^5] \\
				&&& | (8) |[2^2,1^4] \\
				&&& | (9) | [1^{8}] \\	
			};
			

			\draw (1) -- (2) node[midway, right] {${\color{blue}C_3}$};
		\draw (2) -- (3) node[midway, above] {${\color{blue}C_1}$};
		\draw (2) -- (4) node[midway, right]{${\color{blue}B_1}$};  
		\draw (3) -- (5) node[midway, right,scale=.85]{${\color{blue}[2B_2]^+}$};
		\draw (4) -- (5) node[midway, below,scale=.9]{${\color{blue}C_2}$};	 
		\draw (5) -- (6) node[midway, above,scale=.96]{${\color{blue}c^{\special}_2}$};	  
		\draw (5) -- (7) node[midway, above,scale=.9]{${\color{blue}b^{\special}_2}$};	  
		\draw (6) -- (8) node[midway, below,scale=.9]{${\color{blue}d_2^+}$};	  
		\draw (7) -- (8) node[midway, below]{${\color{blue}C_1}$};	  
		\draw (8) -- (9) node[midway, right]{${\color{blue}d_4^+}$};

			\node [below=1cm, align=flush center,text width=8cm] at (9)
			{
				$D_4$ Minimal Special Degenerations  
			};
			\end{scope}
		
		\begin{scope}[shift={(4,0)}]
			\matrix (C4antispecial)
			[matrix of math nodes,
			nodes in empty cells,
			nodes={outer sep=0pt,minimum size=0pt},
			column sep={1.8cm,between origins},
			row sep={1cm,between origins}]
			{
				&&& | (1) | [8]  \\
				&&& | (2) |[6,2] \\
				&& | (3) |  [4^2]&& | (4) |  [6,1^2]\\
				&&& | (5) |[4,2^2]  \\
				&& | (6) |[2^4] && | (7) |[4,1^4] \\
				&&& | (8) |[2^3,1^2] \\	
				&&& | (9) | [2,1^{6}] \\	
			};
			
			\draw (1) -- (2) node[midway, right] {${\color{blue}C^*_4}$};
			\draw (2) -- (3) node[midway, above] {${\color{blue}C^*_2}$};
			\draw (2) -- (4) node[midway, right]{${\color{blue}C_1}$};  
			\draw (3) -- (5) node[midway, above]{${\color{blue}C_2}$};
			\draw (4) -- (5) node[midway, above,scale=.9]{${\color{blue}B_2}$};	 
			\draw (5) -- (6) node[inner sep=7pt, midway, above,scale=.75]{${\color{blue}d_3/V_4}$};	  
			\draw (5) -- (7) node[midway, above]{${\color{blue}c^{\special}_2}$};	  
			\draw (6) -- (8) node[midway, below]{${\color{blue}C_1}$};	  
			\draw (7) -- (8) node[midway, below]{${\color{blue}B_1}$};	  
			\draw (8) -- (9) node[midway, right]{${\color{blue}c^{\special}_3}$};
		
			\node [below=1cm, align=flush center,text width=8cm] at (9)
			{
				$C_4$ {\bf Alternative} Minimal Special Degenerations  
			};
		\end{scope}
		
	\end{tikzpicture}
\end{figure}

\subsection{Lusztig's Weyl group conjecture}  \label{intro_lusztig_conj} 

In \cite[\S 0.4]{Lusztig:adjacency}, Lusztig attached a Weyl group $W'$ to each minimal special degeneration.  
He then made a conjecture relating the exponents of $W'$ to what amounts to the $C(\sg)$-invariant 
part of the intersection homology $\rm IH^*(\SOe)$ when $\dim (\SOe) \geq 4$.   In \S \ref{lusztig} we prove his 
conjecture, which is open in types $B$, $C$, $D$, although we have to modify slightly the $W'$ that he attaches to those minimal degenerations $(\lambda, \mu)$ in type $D_n$ where there is a single odd part in $\mu$.

\subsection{Acknowledgments}
This work, major parts of which were sketched in 2012, is a continuation of the papers [FJLSa], [FJLSb] that were jointly authored with Baohua Fu.  We thank him for his vital contribution to the project from its inception.

\section{Background material in the classical group case}\label{partitions}

\subsection{Notation on partitions}\label{partition_notation}

In the classical groups it will be helpful  
to have a description of 
the elements of $\nilcone_o$ 
and the map $d$
in terms of partitions.  We introduce that notation
following the references \cite{C-M},  \cite{Carter}, \cite{Spaltenstein}.

Let $\parti (N)$ denote the set of partitions of $N$.
For $\lambda \in \parti(N)$,  we write $\lambda = [\lambda_1, \dots, \lambda_k]$, 
where $\lambda_1 \geq \dots \geq \lambda_k > 0$ and $|\lambda|:=  \sum \lambda_j$ is equal to $N$. 
Define 
$$m_\lambda(s)= \# \{j \ | \ \lambda_j = s \},$$
the multiplicity of the part $s$ in $\lambda$.   We use $m(s)$ if the partition is clear.
Sometimes we write $[\dots, s^{m(s)}, \dots]$ instead of 
$$[\dots, \overbrace{s, s, \dots, s}^{m(s)}, \dots]$$ 
for a part $s$ in $\lambda$.  
The set of nilpotent orbits $\nilcone_{o}$ in $\g=\mathfrak{sl}_{n}$ under the 
adjoint action of $G=SL_n$  is in bijection with 
$\parti (n)$. 

For $\epsilon \in \{ 0,1 \}$,
let $V=V_\epsilon$ be a vector space, of dimension $N$,
with a nondegenerate bilinear form satisfying 
$\langle v, v' \rangle = (-1)^\epsilon \langle v', v \rangle$ for $v,v' \in V$. 
Let $\g(V)$ be the Lie algebra associated to the form on $V$, 
so that $\g(V) = \mathfrak{so}_{N}$ when $\epsilon =0$
and $\g(V) = \mathfrak{sp}_N$ when $\epsilon=1$ and $N$ is even.

Let $$\parti_{\epsilon} (N):= \{\lambda \in \parti(N)  \ | \   m(s) \equiv 0 \text{ whenever }   s \equiv \epsilon  \},$$
where all congruences are modulo $2$.
Then the set of nilpotent orbits $\nilcone_{o}$ in $\g(V)$ under the  group $G=G(V)$
preserving the form is given by 
$\parti_{1} (2n)$ when $\g$ is of type $C_n$;
by $\parti_{0} (2n+1)$ when $\g$ is of type $B_n$; and by  
$\parti_{0} (2n)$ when $\g$ is of type $D_n$, except that those partitions
with all even parts correspond to two orbits in $\nilcone_{o}$ (called
the very even orbits, where there are two orbits interchanged by the orthogonal group).
We will also refer to $\parti_{1} (2n)$ as $\parti_{C} (2n)$;
to $\parti_{0} (2n+1)$ as  $\parti_{B} (2n+1)$;
and to $\parti_{0} (2n)$ as  $\parti_{D} (2n)$.
We sometimes call a partition $\lambda \in \parti_\epsilon(N)$ an $\epsilon$-partition.
For $\lambda \in \parti(N)$ or $\lambda \in \parti_\epsilon(N)$, we denote by $\0_\lambda$ the corresponding
nilpotent orbit in $\g$.

Define the height of a part $s$ in $\lambda$ to be the number 
$$h_\lambda(s):=\#\{ \lambda_j \, | \, \lambda_j \geq s \}.$$
We write $h(s)$ if the partition is clear.
In terms of Young diagrams, the  position $(s,h(s))$ is a corner of the diagram, writing each part $\lambda_i$ as the boxes with upper right corner $(1,i), \dots (\lambda_i,i)$.
In other words, we have $\lambda_{h(s)} = s$ and $\lambda_{h(s)+1} < \lambda_{h(s)}$.

The dual or transpose partition of $\lambda$, denoted 
$\lambda^*$, is defined by 
$$(\lambda^*)_i = \# \{ j \ | \ \lambda_j \geq i \}.$$
If we set $j = h(s)$, then $\lambda^*$ is the partition with part $h(s)$ occurring $\lambda_j - \lambda_{j+1}=s- \lambda_{j+1}$ times.

The set $\parti(N)$ 
is partially ordered by the dominance order on partitions,
where $\mu \preceq \lambda$ whenever $\sum_{i=1}^k \mu_i \leq  \sum_{i=1}^k \lambda_i$ for all $k$.
This induces a partial ordering on the sets  
$\parti_{C} (2n)$, $\parti_{B} (2n+1)$, and $\parti_{D} (2n)$
and these partial orderings coincide
with the partial ordering on nilpotent orbits given
by the closure ordering.  We will refer to nilpotent orbits and partitions
interchangeably in the classical groups (with the
caveat mentioned earlier for the very even orbits in type $D$).

Let $X=B$, $C$, or $D$.
Let $N$ be even (resp. odd) if $X$ is of type $C$ or $D$ (resp. $B$).
The $X$-collapse of $\lambda \in \parti(N)$ is the
partition $\lambda_X \in \parti_{X}(N)$ satisfying
$\lambda_X \preceq \lambda$ and such that 
if $\mu \in \parti_{X}(N)$ and $\mu \preceq \lambda$, then $\mu \preceq \lambda_X$.
The $X$-collapse always exists and is unique. 

\subsection{Special partitions and the duality maps} \label{Subsect:special_partitions}

The special nilpotent orbits were defined by Lusztig \cite{Lusztig:special}.
Denote by $\nilcone^{sp}_{o}$ the special nilpotent orbits in $\g=\g(V)$.
All nilpotent orbits are special in type $A$.  Here we describe the special nilpotent orbits in 
types $B$, $C$, and $D$, as well as introduce a second subset of $\nilcone_{o}$, which behaves like special orbits.
We define four sets of partitions, with $\epsilon' \in \{0,1\}$, as follows
\begin{equation}\label{special_condition}
	\parti_{\epsilon,\epsilon'} (N) := \{\lambda \in \parti_{\epsilon}(N)  \ | \   h(s) \equiv \epsilon' \text{ whenever }  s \equiv \epsilon  \}.
\end{equation}
Because of the $s=0$ case, for $N$ odd, the set is nonempty only when $(\epsilon,\epsilon') = (0,1)$.
For $N$ even, the set is nonempty for $(\epsilon,\epsilon') \in \{(0,0),(1,0),(1,1)\}$.
Then the partitions for the special orbits in type $B_n,C_n,D_n$ are given by
$\parti^{sp}_B (2n+1):= \parti_{0,1} (2n+1)$, $\parti^{sp}_C (2n):=  \parti_{1,0} (2n)$,
and $\parti^{sp}_D (2n):= \parti_{0,0} (2n)$.
The fourth case leads to a second subset of $\parti_C (2n)$, which is
$\parti^{asp}_C (2n):= \parti_{1,1} (2n)$.   We refer to these nilpotent orbits in type $C$
as the alternative special nilpotent orbits.

Each of $\parti^{sp}_B (2n+1)$, $\parti^{sp}_C (2n)$, $\parti^{sp}_D (2n)$,
and $\parti^{asp}_C (2n)$ inherits the partial order from the set of all partitions and 
this agrees with the one coming from inclusion of closures of the corresponding nilpotent orbits. 

The sets $\parti^{sp}_B (2n\!+\!1)$ and $\parti^{sp}_C (2n)$ are in  bijection
(see \cite{Spaltenstein}, \cite{Kraft-Procesi:special}), and also
the sets $\parti^{sp}_D (2n)$ and $\parti^{asp}_C (2n)$ are in bijection, as we now describe.

Given $\lambda =[\lambda_1 \geq \dots 
\geq \lambda_{k-1} \geq \lambda_k > 0]$,
let 
$$\lambda^{-} = [\lambda_1 \geq \dots \geq \lambda_{k-1}\geq \lambda_k -1 ]$$
and
$$\lambda^{+} = [\lambda_1 +1 \geq \dots \geq \lambda_{k-1} \geq \lambda_k].$$
Then the bijections are given as follows, using $f$ for each map: 
\begin{equation}
	\begin{aligned}
	& f_{BC}: \parti^{sp}_B (2n+1) \to \parti^{sp}_C (2n) \text{ given by } f(\lambda) = (\lambda^{-}) _C \\
	& f_{CB}: \parti^{sp}_C (2n) \to \parti^{sp}_B(2n+1) \text{ given by }  f(\lambda) = (\lambda^{+}) _B \\
	& f_{DC}: \parti^{sp}_D(2n) \to \parti^{asp}_C (2n) \text{ given by }  f(\lambda) = ((\lambda^{+})^-) _C \\
	& f_{CD}: \parti^{asp}_C (2n) \to \parti^{sp}_D(2n) \text{ given by }  f(\lambda) = \lambda _D  \\
	\end{aligned} 
\end{equation}
Note that in general $f$ maps $\parti_{\epsilon,\epsilon'}$ to $\parti_{1-\epsilon,1-\epsilon'}$.

Each of these maps respects the partial order.  The first two maps are dimension preserving (and codimension preserving).
The second two maps are codimension preserving (as we shall see).  More precisely, the $f_{DC}$ map sends
an orbit of dimension $N$ to one of dimension $N+2n$.  The shift is because the minimal orbit in $C_n$ is the minimal element 
in $\parti^{asp}_C (2n)$.

Write $d(\lambda)$ for $\lambda^*$.
It is known (or easy to show) that $d$ determines bijections between the following sets:
\begin{equation} 
	\begin{aligned}
	\parti^{sp}_B (2n\!+\!1) &\xrightarrow{\text{  \,  d  \,  }}  \parti^{sp}_B (2n\!+\!1) \\
	\parti^{sp}_C (2n) &\xrightarrow{\text{  \,  d  \,  }}  \parti^{sp}_C (2n) \\
	\parti^{sp}_D (2n) &\xleftrightarrow{\text{  \,  d  \,  }}  \parti^{asp}_C (2n) \\
	\end{aligned}
\end{equation}
It is order-reversing since this holds for all partitions.   We refer to $d$ as the {\it internal duality}
or just {\it transpose}.

It is known that $d \circ f = f \circ d$ and the duality $d_{LS}$ of Lusztig-Spaltenstein is 
given by $d_{LS}= d \circ f = f \circ d$. 
We have squares relating the three kinds of maps between the orbits (or their 
corresponding partitions) as shown in Figure \ref{dual_squares}.   

\subsection{Explicit description of $f$ maps}

We now describe more specifically how the $f$ maps work.  


Let $X$ be one of the types $B$, $C$, $D$.  
Let $\lambda \in \parti(N)$ with $N$ even for type $C$ and $D$ and odd for type $B$.
We want to find $\lambda_X$.   Set $\epsilon = \epsilon_X$.
List the parts $s$ in $\lambda$ with $s \equiv\epsilon$ and $m(s)$ odd as 
$a_1 > a_2 > \dots >  a_{2n} \geq 0$.    
For $X=C$, these are the odd parts, so there is an even number of them.  But for  $X=B$ or $X=D$, since $\epsilon=0$,
we will add a single part equal to $0$, if necessary, so that there is an even number of even parts with odd multiplicity.
Next, between $a:= a_{2i-1}$ and $c:=a_{2i}$, list the parts $s \equiv \epsilon$ as $b_1 > b_2 > \dots > b_j$,
which necessarily have $m(b_i)$ even.  Ignoring 
the parts not congruent to $\epsilon$ , then $\lambda$ will look locally like
$$a^{m(a)}, b_1^{m(b_1)}, \dots , b_j^{m(b_j)},  c^{m(c)}.$$
Then under the collapse of $\lambda$ to $\lambda_X$, these values will change to
$$a^{m(a)-1},a{-}1,b_1{+}1, b_1^{m(b_1)-2},b_1{-}1, \dots , b_j{+}1, b_j^{m(b_j)-2}, b_j{-}1, c{+}1, c^{m(c)-1}$$
so that the multiplicities of the parts congruent to $\epsilon$ are now even, as required to be in $\parti_X(N)$.
The other parts of $\lambda$ are unaffected under the collapse.
As a result of this rule, there is a formula for the collapse for a part $s$ based on its height $h(s)$ and its multiplicity $m(s)$.
\begin{lemma}
	Let $X$ be $B, C,$ or $D$ and $\epsilon = \epsilon_X$.
	Let $\lambda \in \parti(N)$ with $N$ as above.  Assume $m(s)$ is even if $s \not \equiv \epsilon$.
	Let $s \equiv \epsilon$ be a part in $\lambda$.   Then $[s^{m(s)}]$ in $\lambda$ changes to
	the following in $\lambda_X$:
	\begin{eqnarray*}
		& [s^{m(s)-1},s{-}1] &  \text{ if } h(s)\equiv 1, m(s) \equiv 1 \\
		& [s{+}1, s^{m(s)-1}] &  \text{ if } h(s)\equiv 0, m(s) \equiv 1 \\ 
		& [s{+}1, s^{m(s)-2},s{-}1] &  \text{ if } h(s)\equiv 1, m(s) \equiv 0 \\ 		 
		&[s^{m(s)} ] &  \text{ if } h(s)\equiv 0, m(s) \equiv 0
	\end{eqnarray*} 
\end{lemma}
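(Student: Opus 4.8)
The plan is to derive the lemma directly from the explicit collapse recipe stated immediately above it, the only genuine work being a parity computation for $h(s)$. Recall that the recipe sorts the parts $s \equiv \epsilon$ into roles: the odd-multiplicity ones are listed as $a_1 > a_2 > \dots > a_{2n} \geq 0$ and grouped into consecutive pairs $(a_{2i-1}, a_{2i})$; within each pair the top part $a = a_{2i-1}$ loses one copy and spawns $a-1$, the bottom part $c = a_{2i}$ loses one copy and spawns $c+1$, and every even-multiplicity part $b$ lying strictly between them loses two copies and spawns both $b+1$ and $b-1$, while even-multiplicity parts outside every pair are untouched. Thus everything reduces to deciding, for a given genuine part $s \equiv \epsilon$, which of these four roles it plays, and the pair of parities $(h(s), m(s)) \bmod 2$ is exactly what encodes this.

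First I would establish the parity formula
$$ h(s) \equiv \#\{\, t \geq s : t \equiv \epsilon,\ m(t) \text{ odd} \,\} \pmod 2. $$
This is immediate from the standing hypothesis that $m(t)$ is even whenever $t \not\equiv \epsilon$: writing $h(s) = \sum_{t \geq s} m(t)$ as a sum over distinct part values $t$, every $t \not\equiv \epsilon$ contributes an even number, so only the odd-multiplicity $\epsilon$-parts that are $\geq s$ survive modulo $2$. In the notation above these are precisely the $a_i$, so $h(s)$ records modulo $2$ the number of $a_i$ with $a_i \geq s$.

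Next I would read off the four cases. If $m(s)$ is odd then $s = a_k$ for some $k$, and the formula gives $h(s) \equiv k \pmod 2$: for $k$ odd the part $s$ is the top $a_{2i-1}$ of its pair, so $h(s) \equiv 1$ and $[s^{m(s)}]$ becomes $[s^{m(s)-1}, s-1]$ (case 1); for $k$ even it is the bottom $a_{2i}$, so $h(s) \equiv 0$ and $[s^{m(s)}]$ becomes $[s+1, s^{m(s)-1}]$ (case 2). If $m(s)$ is even then $s$ equals no $a_i$, and $h(s) \equiv \#\{a_i > s\} \pmod 2$: an odd count places $s$ strictly inside an open pair, i.e. $s$ is a $b$-part, yielding $[s+1, s^{m(s)-2}, s-1]$ (case 3), whereas an even count places $s$ outside every pair, leaving it untouched (case 4). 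As a consistency check, cases 1 and 2 change the box count by $-1$ and $+1$ respectively, and since the $a_i$ occur in pairs these cancel globally, so $|\lambda_X| = |\lambda|$.

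The one point needing care — and the main obstacle — is the phantom part $0$ inserted for types $B$ and $D$ when the number of odd-multiplicity even parts is odd. Here I would observe that this phantom is always the smallest entry $a_{2n} = 0$, hence strictly below every genuine part $s > 0$; consequently it never enters $\#\{a_i \geq s\}$ and leaves the parity formula for genuine parts intact. Its only effect is as the bottom of the last pair $(a_{2n-1}, 0)$, where it supplies the compensating part $0+1 = 1$ that balances the box lost by the genuine top $a_{2n-1}$ (which still obeys case 1); since $0$ is not a part of $\lambda$, it lies outside the scope of the statement, and the four displayed cases account correctly for every genuine part, completing the verification.
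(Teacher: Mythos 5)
Your proof is correct and follows essentially the same route as the paper's: you establish the parity congruence $h(s)\equiv\#\{t\geq s : t\equiv\epsilon,\ m(t)\ \text{odd}\}\pmod 2$ exactly as the paper does, and then match the four parity cases to the roles $a_{2i-1}$, $a_{2i}$, $b_k$, or untouched part in the collapse recipe. The only difference is one of explicitness — you spell out the index-counting case analysis and the harmlessness of the phantom part $0$ in types $B$ and $D$, steps the paper compresses into its final sentence — which is a faithful elaboration rather than a different argument.
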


\begin{proof}
	Since $h(s) = \sum_{j \geq s} m(j)$, it is clear that $h(s) \equiv  \#\{m(j) \ | \ m(j) \text{ is odd and } j \geq s\}$.
	Since any part $s$ with  $s \not \equiv \epsilon$ has $m(s)$ even, 
	it follows that 
	$$h(s) \equiv  \#\{m(j) \ | \ m(j) \equiv 1, j \geq s, \text{ and } j \equiv \epsilon\}.$$
	So the four conditions in the lemma specify whether the part $s$ plays the role of some $a_{2i-1}$, $a_{2i}$, $b_k$, or
	a part between some $a_{2i}$ and $a_{2i+1}$ and hence unaffected by the collapse.
\end{proof}

Let $X$ be one of the types $B,C,D$ or $C'$.  Here $C'$, refers to the alternative special setting.
Now we can say what happens under the $f$ maps passing from $X$ to type $f(X)$.
\begin{lemma}\label{f_map}
	Let $\lambda \in \parti^{sp}_{X}(N)$. 
	Let $s \not \equiv \epsilon_X$ be a part in $\lambda$. 
	Then $f(\lambda)$ is computed by replacing each occurrence of $[s^{m(s)}]$ by 
		\begin{eqnarray}
		& [s^{m(s)-1},s{-}1] &  \text{ if } h(s)\equiv  \epsilon'_X, m(s) \equiv 1 \\
		& [s{+}1, s^{m(s)-1}] &  \text{ if } h(s)\not \equiv  \epsilon'_X, m(s) \equiv 1 \\ 
		& [s{+}1, s^{m(s)-2},s{-}1] &  \text{ if } h(s)\equiv \epsilon'_X, m(s) \equiv 0 \\ 		 
		&[s^{m(s)} ] &  \text{ if } h(s)\not \equiv  \epsilon'_X, m(s) \equiv 0 
	\end{eqnarray} 
\end{lemma}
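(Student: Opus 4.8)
The plan is to realize each $f$ map as the $f(X)$-collapse of an explicit modification $\tilde\lambda$ of $\lambda$, and then to read off the effect on the parts $s\not\equiv\epsilon_X$ from the explicit collapse recipe recalled above. Since $f$ sends $\parti_{\epsilon,\epsilon'}$ to $\parti_{1-\epsilon,1-\epsilon'}$, the target type $f(X)$ has $\epsilon_{f(X)}=1-\epsilon_X$, so the parts on which the $f(X)$-collapse acts are exactly those $s\equiv\epsilon_{f(X)}$, i.e. the $s\not\equiv\epsilon_X$ of the statement. From the definitions of the maps, $\tilde\lambda=\lambda$ for $f_{CD}$, $\tilde\lambda=\lambda^-$ for $f_{BC}$, $\tilde\lambda=\lambda^+$ for $f_{CB}$, and $\tilde\lambda=(\lambda^+)^-$ for $f_{DC}$; the point to record is that the top bump $\lambda^+$ occurs precisely when $\epsilon'_X=0$ and the bottom adjustment $\lambda^-$ precisely when $\epsilon_X=0$. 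When $\epsilon_{f(X)}=0$ I will collapse $\tilde\lambda$ using the phantom part $0$ exactly as in that recipe.

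The key step is a parity count. For $s\equiv\epsilon_{f(X)}$ let $H(s)$ be the number of parts $j\ge s$ with $j\equiv\epsilon_{f(X)}$ and $m_{\tilde\lambda}(j)$ odd; as in the proof of the preceding Lemma, the parities of $H(s)$ and of $m_{\tilde\lambda}(s)$ are what select which of the four collapse outcomes applies to $s$. Because $\lambda\in\parti_{\epsilon_X}$ forces every part $\equiv\epsilon_X$ to have even multiplicity, one has $h_\lambda(s)\equiv\#\{j\ge s:\ m_\lambda(j)\text{ odd}\}\equiv H_\lambda(s)\pmod 2$, where $H_\lambda$ is the same count taken in $\lambda$. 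For an interior part $s$, away from the one or two entries altered by the bumps (so that $m_{\tilde\lambda}(s)=m_\lambda(s)$), I will check that $\lambda^-$ leaves this count unchanged while $\lambda^+$ flips it, giving
\[
H(s)\equiv h_\lambda(s)+(1-\epsilon'_X)\pmod 2,
\]
the summand $1-\epsilon'_X$ appearing exactly because $\lambda^+$ is present iff $\epsilon'_X=0$. Feeding this congruence into the recipe turns its alternative $H(s)\equiv1$ versus $H(s)\equiv0$ into $h_\lambda(s)\equiv\epsilon'_X$ versus $h_\lambda(s)\not\equiv\epsilon'_X$, and matches the four outputs $[s^{m-1},s{-}1]$, $[s{+}1,s^{m-1}]$, $[s{+}1,s^{m-2},s{-}1]$, $[s^{m}]$ with the four cases of the statement.

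The remaining, and genuinely delicate, work is at the boundary, and this is where I expect the main obstacle to be. For the largest and smallest parts the equality $m_{\tilde\lambda}(s)=m_\lambda(s)$ fails, and the operations $\lambda^{\pm}$ themselves perform part of the transformation: for example in $f_{BC}$, when the smallest part $\lambda_k$ is odd the step $\lambda^-$ already replaces it by $\lambda_k-1$, producing the ``$s-1$'' of the first case before any collapse, and one must verify against the special condition $h(s)\equiv\epsilon'_X$ for $s\equiv\epsilon_X$ (available since $\lambda\in\parti^{sp}_X$) that the resulting multiplicities and heights fall into the asserted case. Likewise, when $\epsilon_{f(X)}=0$ the phantom $0$ is an active part to which the rule applies, and a direct count of $H(0)$ including the phantom shows it accounts for the extra small part that $\lambda^+$ forces into $f(\lambda)$. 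Thus the whole proof reduces to the clean interior congruence above together with a finite, case-by-case check at the top part, the bottom part, and the phantom $0$ — including that the various $s\pm1$ contributed by adjacent blocks and by $\lambda^{\pm}$ do not collide so as to corrupt the recorded multiplicities — and it is precisely the special condition on $\lambda$ that makes these boundary parities come out as claimed.
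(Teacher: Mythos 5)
Your proposal is correct and follows essentially the same route as the paper's proof: realize each $f$ map as the $f(X)$-collapse of a bumped partition ($\lambda$, $\lambda^-$, $\lambda^+$, or $(\lambda^+)^-$), feed this into the part-by-part collapse recipe of the preceding lemma, and observe that the top bump $\lambda^+$ (present exactly when $\epsilon'_X=0$) flips the parity that links $h_\lambda(s)$ to the case selection, deferring the same finite boundary checks at the smallest part and the phantom $0$. The only cosmetic difference is that you compute the odd-multiplicity count $H(s)$ directly in $\tilde\lambda$, whereas the paper cascades the added unit down to the first part $x\not\equiv\epsilon_X$ to form an intermediate partition $\lambda'$ and then applies the preceding lemma with the reversed parity relation; these are the same argument in slightly different bookkeeping.
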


\begin{proof}
	First, if $s \not \equiv \epsilon_X$, then $s \equiv \epsilon_{f(X)}$.
	For $f_{BC}$ and $f_{CD}$ the map is just the ordinary collapse (except for the smallest two parts in type $B$).  
	In these cases, $\epsilon'_X=1$ and we are in the situation of the previous lemma when performing the collapse in type $f(X)$.
	In type $B$,  there are a couple of cases to check that the effect of $\lambda_k$ being replaced by $\lambda_k{-}1$ is consistent with the above cases.
		
	On the other hand, for $f_{DC}$ and $f_{CB}$ we have $\epsilon'_X=0$.  
	For the $f$ map, we first increase $\lambda_1$ by 1 and then perform the collapse for type $f(X)$.
	This $1$, under the collapse, moves down to the first part $x$ with $x \not \equiv \epsilon_X$.  By the assumption that
	the parts congruent to $\epsilon_X$ have even multiplicity,
	we have that $h(s)$ odd.  So the rule is correct for the part $x$.  Call this new partition, where $x$ changes to $x+1$, $\lambda'$.  Then 
	$$h_{\lambda'}(s) \not \equiv  \#\{m_{\lambda'}(j) \ | \ m_{\lambda'}(j) \equiv 1, j \geq s, \text{ and } j \equiv \epsilon_{f(X)} \}.$$
	Since $\epsilon'_X=0$, the previous lemma gives the result again for the collapse of $\lambda'$, which is $f(\lambda)$.
	For $f_{DC}$,  there are a couple of cases to check that the effect of $\lambda_k$ being replaced by $\lambda_k-1$ is consistent with the above cases.
\end{proof}

\subsection{Special pieces result}\label{special_pieces}

Spaltenstein \cite{Spaltenstein} showed that each non-special orbit $\0'$ belongs to the closure of a unique special orbit $\0$,
which is minimal among all special orbits whose closure contains $\0'$.  That is,
if a special orbit contains $\0'$ in its closure, then it contains $\0$ in its closure. 

We now describe the process for finding the partition $\lambda$ for $\0$ given the partition $\nu$ for $\0'$.
Let $X$ be one of the four types $B,C,D,C'$.  Let $\nu \in \parti_X(N)$ be non-special.
Let $S$ be the collection of parts $s$ in $\nu$ such that $s \equiv \epsilon_X$ and 
$h(s) \not\equiv \epsilon'_X$.  These are the parts that fail the condition 
for $\nu$ to be in $\parti_{\epsilon,\epsilon'}$ as required by \eqref{special_condition}.  
Note that $s \equiv \epsilon_X$ means that $m(s)$ is even, so $m(s) \geq 2$.
Let $\lambda$ be obtained from $\nu$
by replacing the subpartition
$[s^{m(s)}]$ in $\nu$ by $[s{+}1,   s^{m(s)-2}, s{-}1]$, for each
$s \in S$.  It is clear that $\lambda\in \parti_X(N)$
and that it satisfies the special condition in (\ref{special_condition}), so lies in $\parti^{sp}_X(N)$.
In fact, $\lambda$ is the partition for $\0$ by \cite{Kraft-Procesi:special} for the cases of $B,C,D$ (the case of $C'$ is similar).

\subsection{Removing rows and columns from a partition}\label{subsect:canceling_rows_cols}

In \cite{Kraft-Procesi:GLn} and \cite{Kraft-Procesi:classical}, Kraft and Procesi defined two operations that take a pair of partitions 
$(\lambda,\mu) \in \parti(N)$ to another pair of partitions.
Viewing the partitions as Young diagrams, the first operation is removing common initial rows of $\lambda$ and $\mu$ 
and the second operation is removing common initial columns.  

\subsubsection{Type $A$}
More precisely, we say $(\lambda, \mu)$ is {\it leading-row-equivalent (after removing $r$ rows)} to $(\lambda', \mu')$  if
$\lambda_i = \mu_i$ for $i \leq r$, while $\lambda_{i}=\lambda'_{i-r}$ and $\mu_{i}=\mu'_{i-r}$ for $i > r$. 
We say $(\lambda, \mu)$ is {\it column-equivalent (after removing $s$ columns)} to $(\lambda', \mu')$ if
$\lambda_i = \mu_i$ for $i > \ell$ and $\lambda_{i}=\lambda'_{i}+s$ and $\mu_{i}=\mu'_{i}+s$ for $i\leq\ell$,
where $\ell = \max\{ i \ | \ \lambda_i > s \}$.  
In both cases, $|\lambda'| = |\mu'|$,  so $\lambda'$ and $\mu'$ are partitions of the same integer.
We say  $(\lambda, \mu)$ is {\it equivalent} to $(\lambda', \mu')$ if they are related by a sequence of these two equivalences, and it follows in that case when $\lambda \preceq \mu$ that 
\begin{enumerate}
	\item $\lambda' \preceq \mu'$
	\item $\codim_{\bar{\0}_\lambda} \0_\mu = \codim_{\bar{\0}_{\lambda'}} \0_{\mu'}$ 
	\item  The singularity of $\bar{\0}_\lambda$ at $\0_\mu$ is smoothly equivalent to the singularity of $\bar{\0}_{\lambda'}$ at $\0_{\mu'}$. 
\end{enumerate}
for the corresponding nilpotent orbits  in $\mathfrak{sl}_n$ \cite{Kraft-Procesi:GLn}.

\subsubsection{Other classical types}

For $\epsilon \in \{ 0, 1\}$ and $\g=\g(V_\epsilon)$  as in \S \ref{partition_notation}, 
similar results hold as above hold when we cancel $r$ leading rows and $s$ columns, with an additional condition.
Let  $\lambda, \mu \in \parti_\epsilon(N)$
and assume when we cancel $r$ leading rows that 
\begin{equation}\label{row_cancellation_condition}
[\lambda_1, \dots, \lambda_r] \text{ is an } \epsilon\text{-partition.}
\end{equation}
This condition always holds if we choose the maximal possible number of rows to cancel between $\lambda$ and $\mu$.
If \eqref{row_cancellation_condition} holds,  
then $\lambda'$ and $\mu' $ are ${\tilde \epsilon}$-partitions,  with $\tilde \epsilon \equiv \epsilon + s$, where $s$ is the number of columns canceled.
Then the above three results hold when the nilpotent orbits are considered in $\g$ \cite[\S 13]{Kraft-Procesi:classical}.
A pair of partitions $(\lambda, \mu)$ is {\it irreducible} if no common rows or columns can be canceled.

Next, we say $(\lambda, \mu) \in \parti_\epsilon(N)$ is  {\it $\epsilon$-row-equivalent} 
to $(\lambda', \mu') \in  \parti_\epsilon(m)$, if the latter is obtained from the former by canceling some leading and some trailing rows
of the Young diagram.  Namely,
there exist $r, r' \in \mathbb N$ so that $\lambda_i = \mu_i$ for $i \leq r$ and $i \geq r'$, 
while $\lambda_{i}=\lambda'_{i-r}$ and $\mu_{i}=\mu'_{i-r}$ for $r< i < r'$.    We pad the partitions by adding zeros so that both partitions have the same number of parts. 
If we set $\nu = [\lambda_1, \dots, \lambda_r, \lambda_{r'}, \lambda_{r'+1}, \dots]$,
then $\nu$ is also an $\epsilon$-partition.
We also say $(\lambda, \mu)$ is {\it locally of the form} $(\lambda', \mu')$.

Now suppose that $(\lambda, \mu)$ is {\it $\epsilon$-row-equivalent}  to $(\lambda', \mu')$.
Let $V=V_\epsilon$.
Then, as in \cite[\S 13.4]{Kraft-Procesi:classical}, there is an orthogonal decomposition $V = V_1 \oplus V_2$, 
with $\dim V_1 = |\lambda'| = |\mu'|$ and $\dim V_2 = |\nu|$ and the $V_i$ carry a nondegenerate $\epsilon$-form by restriction from $V$.
Moreover, $\lambda', \mu' \in \parti_\epsilon(\dim V_1)$ and  $\nu \in \parti_\epsilon(\dim V_2)$, so we can pick 
nilpotent elements $x_1, e_1 \in \g(V_1)$ with partitions $\lambda', \mu'$, respectively,
and $e_2 \in \g(V_2)$ with partition $\nu$.  Then $x = x_1 +e_2$ has partition $\lambda$ and $e = e_1+e_2$ has partition $\mu$.  
The arguments in \S13 in \cite{Kraft-Procesi:classical} give
\begin{proposition} \label{slice_row_equiv}
	Choose an $\mathfrak{sl}_2$-triple for $x_1$ in $\g(V_1)$. 
	Then the natural map of $\g(V_1)$ to $\g(V_1)+e_2 \subset \g$ gives an isomorphism of 
the slice ${\mathcal S}_{\0_{\lambda'},e_1}$ in $\g(V_1)$ to the slice ${\mathcal S}_{\0_\lambda,e}$ in $\g$.
\end{proposition}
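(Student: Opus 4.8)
The plan is to exhibit the claimed isomorphism as the restriction of the affine-linear closed immersion $\phi\colon \g(V_1)\to\g$, $\phi(z)=z+e_2$, and to reduce everything to the set-theoretic equality
\[
\mathcal S_e\cap\overline{\0_\lambda}=e_2+\bigl(\mathcal S_{e_1}\cap\overline{\0_{\lambda'}}\bigr).
\]
To make the two slices compatible I first fix the $\sl_2$-triple $\{e_1,h_1,f_1\}$ through $e_1$ defining $\mathcal S_{\0_{\lambda'},e_1}=e_1+\g(V_1)^{f_1}$, together with a triple $\{e_2,h_2,f_2\}$ for $e_2$ in $\g(V_2)$. Since $V=V_1\oplus V_2$ is an orthogonal decomposition, $\g(V_1)$ and $\g(V_2)$ commute inside $\g$, so $\{e,h,f\}:=\{e_1{+}e_2,\,h_1{+}h_2,\,f_1{+}f_2\}$ is an $\sl_2$-triple through $e$, and I use it to form $\mathcal S_e=e+\g^f$. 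For $z\in\g(V_1)$ one has $[f,z]=[f_1,z]$ (the cross term $[f_2,z]$ vanishes), whence $\g(V_1)^{f_1}=\g(V_1)\cap\g^f\subseteq\g^f$ and $\phi(\mathcal S_{e_1})\subseteq\mathcal S_e$. Thus $\phi$ restricts to a closed immersion of slices, and it remains only to identify the images inside $\overline{\0_\lambda}$.

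The easy inclusion is $\phi\bigl(\mathcal S_{e_1}\cap\overline{\0_{\lambda'}}\bigr)\subseteq\mathcal S_e\cap\overline{\0_\lambda}$. Because $z\in\g(V_1)$ and $e_2\in\g(V_2)$ act on complementary orthogonal summands, they commute as operators on $V$, so for a nilpotent $z$ of Jordan type $\pi$ the element $z+e_2$ is block-diagonal of Jordan type $\pi\cup\nu$ and lies in $\0_{\pi\cup\nu}$. As $\lambda=\lambda'\cup\nu$ and the dominance order is preserved under forming the union with the fixed partition $\nu$, any $\pi\preceq\lambda'$ gives $\pi\cup\nu\preceq\lambda$; running over the strata of $\overline{\0_{\lambda'}}$ yields $e_2+\overline{\0_{\lambda'}}\subseteq\overline{\0_\lambda}$. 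Combined with $\phi(\mathcal S_{e_1})\subseteq\mathcal S_e$ this gives the inclusion.

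The reverse inclusion $\mathcal S_e\cap\overline{\0_\lambda}\subseteq e_2+\g(V_1)$, together with the fact that the resulting $\g(V_1)$-component lands in $\mathcal S_{e_1}\cap\overline{\0_{\lambda'}}$, is the main obstacle; this is exactly where the linear-algebra arguments of \cite[\S 13]{Kraft-Procesi:classical} are needed. Writing a point $g=e+w\in\mathcal S_e\cap\overline{\0_\lambda}$ in block form with respect to $V=V_1\oplus V_2$, the task is to show that the off-diagonal blocks of $w$ vanish and that the $V_2$-block of $g$ is forced to be $e_2$. The essential mechanism is to play the slice constraint $w\in\g^f$, which bounds the $\ad h$-weights and hence the size of the off-diagonal and $V_2$-entries, against the orbit-closure rank inequalities $\operatorname{rank}(g^i)\le\operatorname{rank}(x^i)$ cutting out $\overline{\0_\lambda}=\overline{\0_\lambda}^{\,GL(V)}\cap\g(V)$: with $\lambda=\lambda'\cup\nu$ the summand $V_2$ must absorb exactly the partition $\nu$, pinning its block to $e_2$ and killing the coupling to $V_1$. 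Once $w\in\g(V_1)$, then $w\in\g(V_1)\cap\g^f=\g(V_1)^{f_1}$, so $A:=g-e_2=e_1+w\in\mathcal S_{e_1}$, and the induced rank bounds on the $V_1$-block give $A\in\overline{\0_{\lambda'}}$. As a consistency check, Slodowy transversality gives $\dim(\mathcal S_e\cap\overline{\0_\lambda})=\codim_{\overline{\0_\lambda}}\0_\mu$ and $\dim(\mathcal S_{e_1}\cap\overline{\0_{\lambda'}})=\codim_{\overline{\0_{\lambda'}}}\0_{\mu'}$, and these codimensions agree by a direct computation from the orbit-dimension formulas, since $\lambda=\lambda'\cup\nu$ and $\mu=\mu'\cup\nu$; so the easy inclusion already lands in a subvariety of the correct dimension.

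Finally I would assemble the pieces. Being affine-linear and injective, $\phi$ is a closed immersion $\g(V_1)\hookrightarrow\g$, so it carries the reduced closed subvariety $\mathcal S_{e_1}\cap\overline{\0_{\lambda'}}$ isomorphically onto its closed, reduced image; by the two inclusions that image equals $\mathcal S_e\cap\overline{\0_\lambda}$ as sets, hence as reduced varieties, which is the assertion. The very even orbits in type $D$, where a single partition may name two $\SO(V)$-orbits interchanged by $\Or(V)$, are accommodated as in the ambient set-up, since $\phi$ is equivariant for the block-diagonal inclusion $G(V_1)\hookrightarrow G(V)$ and respects the labelling.
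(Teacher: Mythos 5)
Your setup and the easy inclusion are sound: the commuting triples $\{e_1{+}e_2,\,h_1{+}h_2,\,f_1{+}f_2\}$, the identity $\g(V_1)^{f_1}=\g(V_1)\cap\g^f$, and the observation that a block sum has Jordan type $\pi\cup\nu$ with $\pi\cup\nu\preceq\lambda'\cup\nu=\lambda$ all check out. The gap is the reverse inclusion, which is the entire content of the proposition, and you do not prove it. The paragraph built around "play the slice constraint $w\in\g^f$ against the orbit-closure rank inequalities", ending with the assertion that $V_2$ "must absorb exactly the partition $\nu$, pinning its block to $e_2$ and killing the coupling to $V_1$", is a restatement in block-matrix language of what has to be shown, not a deduction; no actual argument is carried out there, and this direct linear-algebra mechanism is also not what \cite[\S 13]{Kraft-Procesi:classical} does, so deferring to loc.\ cit.\ does not fill the hole.

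What actually closes the argument — and what the paper's proof rests on — are precisely the two ingredients your write-up sidelines or omits. First, the equality $\dim{\mathcal S}_{\0_{\lambda'},e_1}=\dim{\mathcal S}_{\0_{\lambda},e}$ coming from codimension preservation under row cancellation is load-bearing, not a "consistency check". Second, normality of nilpotent orbit closures in $\mathfrak{gl}_N$, which you never invoke: it makes the analogous slice ${\mathcal S}^{\mathfrak{gl}}_e\cap\overline{\0^{GL}_\lambda}$ in $\mathfrak{gl}(V)$ normal, and this slice is connected by the contracting $\C^*$-action, hence irreducible; since $\phi$ carries the $\mathfrak{gl}(V_1)$-slice onto a closed subvariety of the same dimension, equality holds in $\mathfrak{gl}(V)$, and intersecting with $\g(V)$ (using $\mathfrak{gl}(V_1)\cap\g(V)=\g(V_1)$, ${\mathcal S}^{\mathfrak{gl}}_e\cap\g(V)={\mathcal S}_e$, and $\overline{\0^{GL}_\lambda}\cap\g(V)=\overline{\0_\lambda}$) then yields exactly the reverse inclusion. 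Some such irreducibility input is unavoidable: a closed subvariety of full dimension need not exhaust a reducible ambient variety, and the classical slice ${\mathcal S}_{\0_\lambda,e}$ is genuinely reducible in cases covered by the proposition (e.g.\ degenerations of type $e$ in Table \ref{codim2}, where it is a union of two surface singularities). Since your proposal never addresses reducibility and never uses normality, it cannot be completed as written.
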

The key ideas in the proof are that both slices have the same dimension (by the codimension result) and the fact that 
the closure of any nilpotent orbit in $\mathfrak{gl}_N$ is normal.

We note that if $(\lambda', \mu')$ are obtained from $(\lambda, \mu)$ by removing $r$ leading rows and $s$ columns
and if condition \eqref{row_cancellation_condition} holds, then 
$(\lambda, \mu)$ is {\it $\epsilon$-row-equivalent} to :  
\begin{equation}\label{row_equivalent}
	(\lambda'', \mu''):=([\lambda'_1+s, \lambda'_2+s, \dots ], [\mu'_1+s, \mu'_2+s, \dots])
\end{equation}
Finally, we call $(\lambda, \mu)$ and $(\lambda'', \mu'')$ locally equivalent, or say locally 
$(\lambda, \mu)$ is equal to  $(\lambda'', \mu'')$.

In the next section \S \ref{combinatorial_classification},
 we show that each pair of partitions corresponding to a minimal special degeneration 
 in orthogonal and symplectic type is equivalent to 
 a unique pair of partitions  $(\lambda, \mu)$ of $N$ for a unique smallest $N$.  
 These pairs are irreducible in the sense of Kraft and Procesi:  
 the maximal possible number of common rows and columns has been removed from the original pair of partitions to obtain $(\lambda, \mu)$.

\section{Combinatorial classification of minimal special degenerations in $B$, $C$, $D$}\label{combinatorial_classification}

%

\begin{theorem}
	Let $(\lambda, \mu) \in \parti_{\epsilon,\epsilon'}$ be partitions corresponding to a minimal special degeneration in the corresponding classical
	Lie algebra.  Then $(\lambda, \mu)$ is equivalent to a unique entry in Table \ref{codim2} or Table \ref{codim4ormore}.  
\end{theorem}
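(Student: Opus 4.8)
The plan is to reduce an arbitrary minimal special degeneration to irreducible form and then enumerate the finitely many irreducible possibilities, matching them against the two tables. First I would apply the Kraft--Procesi row and column cancellations of \S\ref{subsect:canceling_rows_cols} to $(\lambda,\mu)$, removing the maximal number of common leading rows and common columns to obtain an irreducible pair $(\lambda^\sharp,\mu^\sharp)\in\parti_{\tilde\epsilon,\tilde\epsilon'}(\tilde N)$. By the three properties recorded after \eqref{row_cancellation_condition} together with Proposition \ref{slice_row_equiv}, these operations preserve the dominance relation, the codimension, and the smooth-equivalence class of the singularity, with $\tilde\epsilon\equiv\epsilon+s$ and $\tilde\epsilon'$ adjusted according to the parities of the numbers of cancelled rows and columns. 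The first thing to verify is that they also preserve the property of being a \emph{minimal special} degeneration: I would check that the cancellation operations restrict to order-isomorphisms between the relevant intervals of special orbits, so that no special orbit is created or destroyed strictly between the two endpoints. Granting this, it suffices to classify irreducible minimal special degenerations and to show that each equivalence class meets the tables in exactly one entry.

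Next I would split into two cases according to codimension. Since every nilpotent orbit has even dimension, a degeneration of codimension $2$ admits no orbit strictly between its endpoints and is therefore automatically a minimal degeneration; conversely a minimal special degeneration that is not a minimal degeneration has codimension at least $4$. For the codimension-$2$ case I would take the Kraft--Procesi classification of minimal degenerations in types $B$, $C$, $D$ \cite{Kraft-Procesi:classical} and intersect it with the special condition \eqref{special_condition}: among the irreducible minimal degenerations of codimension $2$, retain exactly those for which both $\lambda^\sharp$ and $\mu^\sharp$ lie in $\parti_{\epsilon,\epsilon'}$. A direct check using the $X$-collapse formula then shows that these are precisely the entries $a$ through $e$ of Table \ref{codim2}.

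The substantive case is codimension at least $4$, where the open interval $(\mu^\sharp,\lambda^\sharp)$ in the dominance order is nonempty but contains only non-special orbits. Here I would use the special-pieces description of \S\ref{special_pieces}: descending from the special orbit $\lambda^\sharp$, the orbits one first meets are non-special, and the parity data $(h(s),m(s))$ governing the collapse controls exactly which box-moves keep one inside a single special piece. Irreducibility forces $\lambda^\sharp$ to have only a bounded number of parts exceeding the value of its tail, and the requirement that no special orbit intervene pins down the allowed differences $\lambda^\sharp-\mu^\sharp$ to a short list. I would organize the enumeration by type, using the bijections $f_{BC}$, $f_{CD}$, $f_{DC}$ and the explicit rule of Lemma \ref{f_map} to transport cases between types $B$, $C$, $D$ (so that, for instance, the $b^{\special}_n$ entries $f^1_{sp}$ and $f^2_{sp}$ are identified), thereby cutting the casework roughly in half. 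Matching the surviving shapes against Table \ref{codim4ormore} completes this case.

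Finally, for uniqueness I would argue that maximal cancellation produces a well-defined irreducible pair of minimal total size $\tilde N$, so each equivalence class contains a single irreducible representative; distinct table entries are then separated by their type data $(\tilde\epsilon,\tilde\epsilon')$, their codimension, and the shape of $\lambda^\sharp$. I expect the main obstacle to be the codimension-$\geq 4$ enumeration: the difficulty is to show that forbidding intermediate \emph{special} orbits, while permitting arbitrarily many intermediate non-special ones, still forces the irreducible shape into exactly the five listed families. This is where the interplay among the parity conditions, the dominance order, and irreducibility is most delicate, and where the collapse and special-piece combinatorics of \S\ref{Subsect:special_partitions}--\S\ref{special_pieces} do the real work.
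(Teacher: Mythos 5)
Your overall skeleton (reduce by Kraft--Procesi cancellation, split by codimension, invoke special pieces) points at the right tools, but the heart of the theorem --- the codimension $\geq 4$ enumeration --- is asserted rather than proved, and the missing step is precisely the idea the paper's proof is built on. You claim that irreducibility plus the absence of intermediate special orbits ``pins down the allowed differences $\lambda^\sharp-\mu^\sharp$ to a short list,'' but you give no mechanism that makes this list finite or even enumerable: a priori the interval $(\mu,\lambda)$ can contain arbitrarily many non-special orbits, and nothing in your top-down descent from $\lambda^\sharp$ bounds the shapes that can occur. The paper's argument works bottom-up: since $(\lambda,\mu)$ is minimal special but not minimal, there is a non-special orbit $\nu$ with $\mu<\nu\leq\lambda$ such that $(\nu,\mu)$ is an ordinary \emph{minimal} degeneration, hence one of the finitely many entries of the Kraft--Procesi list; and then, by Spaltenstein's special pieces result (\S\ref{special_pieces}), $\lambda$ is \emph{uniquely determined} by $\nu$ --- it must be the minimal special orbit whose closure contains $\nu$, computable by the explicit rule replacing $[s^{m(s)}]$ by $[s{+}1,s^{m(s)-2},s{-}1]$ for the failing parts. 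This converts the problem into a finite case check over the types $a$--$h$ of \cite{Kraft-Procesi:classical}: types $a$ and $b$ are impossible (height parities force $\nu$ special whenever $\mu$ is); types $c$, $d$, $e$ (for $n\geq 2$) with $\nu$ non-special produce a second \emph{special} orbit $\nu'$ strictly between $\lambda$ and $\mu$, contradicting minimality; and types $f$, $g$, $h$ (plus type $e$ with $n=1$) yield exactly the five entries of Table \ref{codim4ormore}. Without this anchoring on the classified minimal degeneration sitting just above $\mu$, your enumeration has no engine, and you concede as much in your final paragraph.

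A secondary caution on your first step: being a minimal \emph{special} degeneration is not a property of the irreducible pair $(\lambda^\sharp,\mu^\sharp)$ alone. Specialness of the endpoints and of the intermediate orbits depends on the parities of the number $l$ of cancelled rows and $s$ of cancelled columns (this is exactly why Tables \ref{codim2} and \ref{codim4ormore} carry the conditions $l\equiv\epsilon'$, $s\equiv\epsilon$, etc., and why the alternative special set $\parti^{asp}_C$ enters the picture). You gesture at this by adjusting $(\tilde\epsilon,\tilde\epsilon')$, which can be made to work, but the order-isomorphism you propose to verify is between intervals of orbits special with respect to the \emph{shifted} type data, not special orbits in the naive sense; stated as you have it, the claim is false. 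The paper sidesteps this bookkeeping by never reducing to an abstract irreducible classification problem: it keeps the ambient partitions and works ``locally,'' which is why its case analysis stays clean.
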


\begin{proof}
	If a minimal special degeneration $(\lambda, \mu)$ is not already minimal, then there exists a non-special orbit $\0_{\nu}$ 
	such that $\0_{\mu}  \leq \0_{\nu}  \leq \0_{\lambda}$, and such that $(\nu, \mu)$ is a minimal degeneration.  Hence, the latter would 
	be one of the entries in the Kraft-Procesi list \cite[\S 3.4]{Kraft-Procesi:classical}.   
	We need to show that $(\lambda, \mu)$ must 
	be equivalent to one of the five cases in Table \ref{codim4ormore}.    
	
	First, since  $\0_{\nu}$ is not special, there is a unique special orbit whose closure contains $\0_{\nu}$ and which is contained in the closure of all other special orbits whose closure contains $\0_{\nu}$  (see \S\ref{special_pieces}). 
	Consequently, $\0_{\lambda}$ must be this orbit, as we are assuming the degeneration $(\lambda, \mu)$ is minimal among special degenerations.   

	Next, we will show that $(\nu, \mu)$  cannot be one of the cases in Table \ref{codim2}.
	Let $X$ be the type of the Lie algebra and $\epsilon = \epsilon_X$, $\epsilon' = \epsilon'_X$.
	
	If $(\nu, \mu)$ is type $a$, then locally it is $([s{+}2,s], [(s{+}1)^2] )$ where $s \not \equiv \epsilon$.  
	Since $s{+}1 \equiv \epsilon$ and $s{+}1$ appears exactly twice, 
	the heights satisfy 
	$h_\nu(x) \equiv h_\mu(x)$ for all $x \equiv \epsilon$.  
	This means that $\nu$ must be special since $\mu$ is special.
	Therefore the type $a$ minimal degeneration cannot occur 
	between a larger orbit that is not special and a smaller orbit that is special.
	
	If $(\nu, \mu)$ is type $b$, then locally it is $([s{+}2n,s], [s{+}2n{-}2,s{+}2] )$ where $s \not \equiv \epsilon$.  
	Hence, all four of $s{+}2n,s, s{+}2n{-}2,$ and $s{+}2$ are not congruent to $\epsilon$.
	As in the previous case, $h_\nu(x) \equiv h_\mu(x)$ for all $x \equiv \epsilon$, and again this forces $\nu$ to be special too, a contradiction.
	
	If $(\nu, \mu)$ is of type $c,d$ or $e$, 
	it will be possible for $\nu$ to be non-special, but we will show that then the degeneration $(\lambda, \mu)$
	is not minimal among degenerations between special orbits.
	
	For type $c$, the pair $(\nu, \mu)$ is locally $([s\!+\!2n\!+\!1,s^2], [s\!+\!2n\!-\!1,(s\!+\!1)^2] )$ where $s \equiv \epsilon$.
	In this case $\nu$ will be non-special, as noted in the Table \ref{codim2}, 
	exactly when the number of rows $l$ removed is congruent to $\epsilon'$.
	This  means $h_\nu(s)= l+3 \not \equiv \epsilon'$ (and  necessarily $s \geq 1$).
	If that is the case, 
	then by \S \ref{special_pieces}, $\lambda$ must locally be $[s\!+\!2n\!+\!1,s\!+\!1, s\!-\!1]$ .
	But then $\lambda$ degenerates to the partition $\nu'$ that is locally $[s\!+\!2n\!-\!1,s\!+\!3, s\!-\!1]$, which is also special
	and degenerates to $\mu$.  
	Hence the degeneration $(\lambda,\mu)$ is not a minimal special degeneration, which is what we wanted to show.
	
	For type $d$, the pair $(\nu, \mu)$ is locally 
	$$([s\!+\!2n\!+\!1,s\!+\!2n\!+\!1,s], [s\!+\!2n,s\!+\!2n,s\!+\!2] )$$ 
	where $s \not\equiv \epsilon$ . 
	In this case $\nu$ will be non-special exactly when 
	$h_{\nu}( s\!+\!2n\!+\!1) \not \equiv \epsilon'$. 
	If that is the case, 
	then $\lambda$ must locally be $[s\!+\!2n\!+\!2,s\!+\!2n, s]$ from \S \ref{special_pieces}.
	But $\lambda$ also degenerates to the partition $\nu'$ that is locally $[s\!+\!2n\!+\!2,s\!+\!2n{-}2, s\!+\!2]$, which is also special
	and degenerates to $\mu$.  
	Hence the degeneration $(\lambda,\mu)$ is not a minimal special degeneration.
	
	For type $e$, the pair $(\nu, \mu)$ is locally 
	$$([s\!+\!2n,s\!+\!2n,s,s], [s\!+\!2n\!-\!1,s\!+\!2n\!-\!1,(s\!+\!1)^2] )$$ 
	where $s \equiv \epsilon$.
	In this case $\nu$ will be non-special exactly 
	when $h_{\nu}( s\!+\!2n) \not \equiv \epsilon'$ (and $ s \geq 1$ is forced).
	Then $\lambda$ must locally be $[s{+}2n\!+\!1,s\!+\!2n\!-\!1, s\!+\!1, s\!-\!1]$ by \S \ref{special_pieces}.  
	But $\lambda$ degenerates to the partition $\nu'$ that is locally 
	$[s\!+\!2n\!-\!1,s\!+\!2n{-}1, s\!+\!3,s\!-\!1]$,  whenever $n \geq 2$.
	This orbit is special since $\mu$ is.  Moreover, $\nu'$ degenerates to $\mu$,
	so $(\lambda,\mu)$ is not  a minimal special degeneration.  
	
	 This shows, for any minimal special degeneration $(\lambda, \mu)$, which is not already a minimal degeneration, that there exists an intermediate orbit $\nu$ such that $(\nu,\mu)$ is a minimal degeneration of codimension at least $4$,
	 {\bf unless} $\nu$ is of type $e$ with $n=1$. 
	 In Kraft-Procesi's classification \cite[Table I]{Kraft-Procesi:classical}, 
	 the minimal degenerations of dimension at least $4$ are labeled $f,g$ and $h,$ 
	 and are given by the minimal nilpotent orbit closures in types $B$, $C$ and $D$, respectively.
	
	Starting with type $g$, where $n \geq 2$, 
	the pair $(\nu, \mu)$ is locally 
	$$([s\!+\!2,(s\!+\!1)^{2n-2},s], [(s\!+\!1)^{2n}] )$$ 
	with $s \not\equiv \epsilon$. 
	Then $\nu$  is never special since $\mu$, being special, forces 
	$h_\nu(s{+}1) = h_\mu(s{+}1){+}1$  to fail the special condition.
	Then $\lambda$ is forced, locally, to  equal
	$[(s\!+\!2)^{2},(s\!+\!1)^{2n-4},s^2]$ by \S \ref{special_pieces}.
	Because $\mu$ is special, the number of rows $l$ removed is congruent to $\epsilon'$.
	After removing $s$ columns, we see that $(\lambda,\mu)$ has the type of $g_{sp}$, and the latter is indeed a minimal special degeneration, containing only the (non-special) orbit between $\lambda$ and $\mu$.

	For type $f$, 
	the pair $(\nu, \mu)$ is locally 
	$$([(s\!+\!2)^2,(s\!+\!1)^{2n-3},s^2], [(s\!+\!1)^{2n+1}] )$$ 
	with 
	$s \equiv \epsilon$ 
	and $n \geq 2$.
	This is never special since $h_\nu(s+2)$ and $h_\nu(s)$ have different parities, so 
	exactly one of them fails the special condition.
	In the former case, $\lambda$ is locally equal to 
	$[s\!+\!3,(s\!+\!1)^{2n-2},s^2]$ and the degeneration is 
	given by $f^1_{sp}$ in the Table \ref{codim4ormore}.  That this is a minimal
	such degeneration follows since $\nu$ is the only (non-special) orbit between $\lambda$ and $\mu$.	
	In the latter case, which forces $s \geq 1$, then $\lambda$ is locally equal to 
	$[(s\!+\!2)^2,(s\!+\!1)^{2n-2},s\!-\!1]$ and this 
	is the minimal special degeneration $f^2_{sp}$.
	Again, $\nu$ is the only (non-special) orbit between $\lambda$ and $\mu$.
	
	Finally, assume the pair 
	$(\nu, \mu)$ is locally $$([(s\!+\!2)^2,(s\!+\!1)^{2n-4},s^2], [(s\!+\!1)^{2n}] )$$ 
	with $s \equiv \epsilon$ for $n \geq 2$.  
	This is type $e$ if $n=2$ (for $n=1$ in the table for $e$) and type $h$ for $n \geq 3$.
	Observe that the special condition $h_\nu(s+2) \equiv \epsilon'$ 
	is satisfied if and only if $h_\nu(s) \equiv \epsilon'$, 
	since these heights differ by the even number $2n-4$.  
	If both conditions are met, then $\nu$ will be special since $\mu$ is special and 
	this is handled by the minimal degeneration cases.  
	
	Otherwise, both the pairs $(s{+}2,s{+}2)$ and $(s,s)$ in $\nu$ cause $\nu$ to fail to be special,
	which implies $s \geq 1$.  Then $\lambda$ takes the form locally
	$[s\!+\!3,(s\!+\!1)^{2n-2},s\!-\!1]$ by \S \ref{special_pieces}.
	This is the form of $h_{sp}$ in Table \ref{codim4ormore} after removing $s{-}1$ columns.
	This is  a minimal special degeneration containing $3$ (non-special) orbits
	between $\lambda$ and $\mu$:
		$$
	\small{\xymatrix@=.2cm{
			{} & {[s\!+\!3,(s\!+\!1)^{2n-2},s\!-\!1]} \ar@{-}[dl]  \ar@{-}[dr]  &  \\
			{[(s\!+\!2)^2,(s\!+\!1)^{2n-3},s\!-\!1] }\ar@{-}[dr] & {} & {[s\!+\!3,(s\!+\!1)^{2n-3},s^2]}\ar@{-}[dl]  \\
			{} & { [(s\!+\!2)^2,(s\!+\!1)^{2n-4},s^2]}\ar@{-}[d]^h & \\ 
			{} & {[(s\!+\!1)^{2n}]} & }}
	.$$
	The four unlabeled edges all have an $A_1 = C_1$ singularity (type $a$).

	We have therefore shown that every minimal special degeneration is either minimal or takes the form in Table \ref{codim4ormore}.
\end{proof}

Next we will show that each degeneration in Table \ref{codim4ormore} has the given singularity type.

\section{Determining the singularities in Table \ref{codim4ormore}} \label{codim4_degs}

For each type in Table \ref{codim4ormore}, we need to show that the degeneration is as promised.  
The case of type $h$ was done in \cite{Kraft-Procesi:classical}.  We begin with the $g_{sp}$ case.

\subsection{Type $g_{sp}$ case}\label{gsp_case}

As discussed in \S \ref{partitions}, 
for the classical Lie algebras  
$\mathfrak{so}_{2n+1}$, $\mathfrak{sp}_{2n}$, and $\mathfrak{so}_{2n}$, 
the nilpotent orbits under the groups $\rm O(2n+1)$, $\Sp(2n)$, and $\rm O(2n)$
are parametrized by partitions in $\parti_B(2n+1)$, $\parti_C(2n)$, and $\parti_D(2n)$.
This occurs via the Jordan-canonical form of the matrix in the ambient general linear Lie algebra.

Let $e \in \g$ be nilpotent.  Fix an $\mathfrak{sl}_2$-subalgebra $\sg$ through
$e$ and let $\cg(\sg)$ be the centralizer of $\sg$ in $\g$, which is a 
maximal reductive subalgebra of the centralizer of $e$ in $\gg$.   Let $C(\sg)$ be the centralizer in $G$.
Then  $C(\sg)$ is a product of orthogonal and symplectic groups, with each 
part $s$ of $\lambda$ contributing a factor $G^s$, which is isomorphic to  $\mathrm{O}(m(s))$
when $s \not \equiv \epsilon$ and isomorphic to 
$\Sp(m(s))$ when $s  \equiv \epsilon$.  
Denote by $\g^s$ the Lie algebra of $G^s$.
See \cite{C-M} for this background material.


Let $V$ denote the defining representation of $\g$ via the ambient general linear Lie algebra.
If $\lambda$ is the partition corresponding to $e$, then 
under $\sg$, the representation $V$ decomposes as a direct sum 
$$\bigoplus_{s} V(s{-}1)^{\oplus m(s)}$$ over the distinct parts $s$ of $\lambda$.
Here $V(m)$ is the irreducible  $\mathfrak{sl}_2$-representation of highest weight $m$.

Now let $e_0 \in \cg(\sg)$ be  nilpotent.  Then
$e_0 = \sum_s e^{(s)}_0$ for some nilpotent $e^{(s)}_0 \in \gg^s$.
Choose  an $\mathfrak{sl}_2$-subalgebra through $e^{(s)}_0$ in
$\gg^s$ and let $\sg_0$ be the diagonal
$\mathfrak{sl}_2$-subalgebra for
$$e_0 = \sum_s e^{(s)}_0.$$
Each $e^{(s)}_0$ corresponds to a partition $\mu^{(s)}$ of
$m(s)$, using the defining representation of $\gg^s$.

Under the sum $\sg \oplus \sg_0$,
$V$ decomposes as
$$\bigoplus_{s,j} V(s{-}1) \otimes V({\mu^{(s)}_j{-}1})$$
where $s$ runs over the distinct parts of $\lambda$ and $j$ indexes the parts of $\mu_{s}$.

Now consider the diagonal $\mathfrak{sl}_2$-subalgebra for $e+e_0$
in $\sg + \sg_0$. An application of the Clebsch-Gordan
formula immediately gives
\begin{lemma}  \label{lemma:partition_adding}
	The nilpotent element $e+e_0$ in $\gg$ has partition equal to the
	union of the partitions
		\begin{eqnarray*}
		& [s{+}\mu^{(s)}_j{-}1, & s{+}\mu^{(s)}_j {-}3 , \dots, | s -\mu^{(s)}_j|{+}1]
	\end{eqnarray*}
	for each distinct part $s$ in $\lambda$ and each part $\mu^{(s)}_j$ of $\mu^{(s)}$.
\end{lemma}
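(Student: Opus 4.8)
The plan is to turn the statement into a pure $\mathfrak{sl}_2$-representation computation by way of the standard dictionary between the Jordan type of a nilpotent endomorphism of $V$ and the $\mathfrak{sl}_2$-module structure of $V$ under a triple through it. Concretely, if $\{x,h_x,y\}$ is an $\mathfrak{sl}_2$-triple in the ambient $\mathfrak{gl}(V)=\mathfrak{gl}_N$ and $V \cong \bigoplus_m V(m)^{\oplus n_m}$ as an $\mathfrak{sl}_2$-module, then $x$ acts as a single regular nilpotent Jordan block of size $\dim V(m) = m+1$ on each copy of $V(m)$, so the partition (Jordan type) of $x$ is $[\dots,(m{+}1)^{n_m},\dots]$. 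Since the partition attached to any element of $\g(V)\subset \mathfrak{gl}(V)$ is by definition its Jordan type on $V$, it is enough to decompose $V$ under a triple through $e+e_0$ and read off the block sizes; the bilinear form on $V$ does not enter this step.

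First I would identify the correct triple. The triple $\sg_0$ is chosen inside $\cg(\sg)=\bigoplus_s \gg^s$, so $\sg$ and $\sg_0$ commute and $\sg+\sg_0 \cong \mathfrak{sl}_2 \oplus \mathfrak{sl}_2$. Writing $\sg=\{e,h,f\}$ and $\sg_0=\{e_0,h_0,f_0\}$, a one-line verification of the bracket relations (the cross terms vanish by commutativity) shows that $\{e+e_0,\,h+h_0,\,f+f_0\}$ is an $\mathfrak{sl}_2$-triple, namely the diagonal copy of $\mathfrak{sl}_2$ in $\sg+\sg_0$, whose nilpositive element is $e+e_0$. Hence I can compute the Jordan type of $e+e_0$ by restricting the already-established decomposition $V \cong \bigoplus_{s,j} V(s{-}1)\otimes V(\mu^{(s)}_j{-}1)$ to this diagonal $\mathfrak{sl}_2$.

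The heart of the argument is then the Clebsch--Gordan formula: restricted to the diagonal $\mathfrak{sl}_2$, one has $V(a)\otimes V(b) \cong V(a{+}b)\oplus V(a{+}b{-}2)\oplus \cdots \oplus V(|a{-}b|)$. Taking $a=s{-}1$ and $b=\mu^{(s)}_j{-}1$ produces summands $V(c)$ with $c$ running from $|s-\mu^{(s)}_j|$ to $s+\mu^{(s)}_j-2$ in steps of $2$, and converting each $V(c)$ into a Jordan block of size $c+1$ gives precisely
$$[\,s+\mu^{(s)}_j-1,\ s+\mu^{(s)}_j-3,\ \dots,\ |s-\mu^{(s)}_j|+1\,].$$
Taking the union over all distinct parts $s$ of $\lambda$ and all parts $\mu^{(s)}_j$ of $\mu^{(s)}$, i.e.\ over all tensor summands of $V$, assembles the full partition of $e+e_0$, as claimed. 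I do not expect a genuine obstacle: the proof is immediate once the dictionary and the diagonal triple are in place, and the only point requiring care is the consistent off-by-one bookkeeping between $\mathfrak{sl}_2$-highest weights and Jordan block sizes (weight $m$ corresponding to block size $m+1$), together with keeping the endpoints and the step of $2$ straight in Clebsch--Gordan.
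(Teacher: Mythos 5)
Your proposal is correct and is exactly the paper's argument: the paper decomposes $V$ under $\sg \oplus \sg_0$ as $\bigoplus_{s,j} V(s{-}1)\otimes V(\mu^{(s)}_j{-}1)$ and then cites the Clebsch--Gordan formula for the diagonal $\mathfrak{sl}_2$-subalgebra through $e+e_0$, which is the computation you carry out. Your write-up merely makes explicit the two bookkeeping points (the diagonal triple $\{e+e_0, h+h_0, f+f_0\}$ and the weight-$m$ versus block-size-$(m{+}1)$ dictionary) that the paper leaves implicit.
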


Suppose that $e_0 \in  \cg(\sg)$ is a  nilpotent element such that each 
$e^{(s)}_0 \in \gg^s$  has partition of the form 
\begin{equation} \label{height_two}
\mu^{(s)} = [2^{a_s}, 1^{b_s}]
\end{equation}
for some
positive integers $a_s$ and $b_s$ with $2a_s + b_s = m(s)$. 
Then the partition $\nu$ of
$e+e_0$ equals the union of the partitions
\begin{eqnarray*}
	[(s+1)^{a_{\tiny i}}, s^{b_i}, (s-1)^{a_i}]
\end{eqnarray*}
for each part $s$ in $\lambda$.  
This follows immediately from the previous lemma since the part $s$
contributes $[s{+}1,s{-}1]$ to $\nu$  when $\mu^{(s)} _j=2$ and it contributes $[s]$ when $\mu^{(s)} _j=1$.

\begin{proposition} \label{codimension_classical}
	Let $e_0 \in \cg(\sg)$ be a nilpotent element satisfying \eqref{height_two}.
	Let $\0$ be the orbit through $e+e_0$.  
	Then the slice ${\mathcal S}_{\0,e}$ 
	is isomorphic to
\begin{equation}
	\prod_s \overline\0_{\mu^{(s)}}
\end{equation}
where the product is over the distinct parts $s$ of $\lambda$.
Here, $\0_{\mu^{(s)}}$ is the orbit with partition $\mu^{(s)}$ in $\mathfrak{sp}(m(s))$
if $s  \equiv \epsilon$ and in $\mathfrak{so}(m(s))$ if $s  \not \equiv \epsilon$.
\end{proposition}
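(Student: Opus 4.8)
The plan is to realise $\prod_s\overline\0_{\mu^{(s)}}$ as a closed subvariety of $\SOe$ by an explicit affine embedding, and then to prove that this embedding is surjective by a dimension count that reduces to the general linear case. First I would build the embedding. Since $\cg(\sg)$ centralises the whole triple $\sg$, it is contained in $\g^f$, so $e+\cg(\sg)\subseteq\Se$; moreover $C(\sg)$ fixes $e$, whence $g\cdot(e+e_0)=e+g\cdot e_0$ for $g\in C(\sg)$. Therefore $\overline{C(\sg)\cdot(e+e_0)}=e+\overline{C(\sg)\cdot e_0}=e+\prod_s\overline\0_{\mu^{(s)}}$, and this set lies in $\overline\0$ because $C(\sg)\cdot(e+e_0)\subseteq\0$. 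Consequently the affine-linear map $\phi\colon\prod_s\overline\0_{\mu^{(s)}}\to\SOe$ sending $(x_s)_s$ to $e+\sum_s x_s$ is a closed immersion (the restriction of the injection $x\mapsto e+x$ on $\cg(\sg)$), identifying the irreducible product of orbit closures with a closed subvariety of the slice. What remains is to show that $\phi$ is onto.

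The representation $V$ decomposes orthogonally as a sum of its isotypic pieces $V[s]=V(s{-}1)\otimes W_s$ with $\dim W_s=m(s)$, and both $e$ and $e_0$ are block-diagonal for this decomposition, acting on $V[s]$ as $e_{\sl_2}\otimes 1$ and $1\otimes e_0^{(s)}$ respectively; by Lemma \ref{lemma:partition_adding} the partition of $e+e_0$ on $V[s]$ is $[(s{+}1)^{a_s},s^{b_s},(s{-}1)^{a_s}]$. The key case is a single block, where I would prove directly that the slice is $\overline\0_{\mu^{(s)}}$. For this I would pass to the ambient $\mathfrak{gl}(V)$, in which the same formula for $\phi$ gives a closed immersion into the corresponding $\mathfrak{gl}$-slice: the transpose-partition computation of \cite{Kraft-Procesi:GLn} shows the two varieties have equal dimension, and since nilpotent orbit closures in $\mathfrak{gl}_N$ are normal while the contracting $\C^\times$-action on $\Se$ preserves the cone $\overline\0$ (so that the slice is connected, hence irreducible), the immersion must be an isomorphism. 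One then recovers the orthogonal/symplectic statement by taking $\theta$-fixed points for the involution $\theta$ with $\g=\mathfrak{gl}(V)^\theta$, exactly as in \cite[\S13]{Kraft-Procesi:classical}: $\SOe$ is the $\theta$-fixed locus of the $\mathfrak{gl}$-slice, $\overline\0_{\mu^{(s)}}$ is the $\theta$-fixed locus of its $\mathfrak{gl}(W_s)$-counterpart, and the two correspond under $\phi$.

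Finally I would assemble the blocks into the product: the decomposition $V=\bigoplus_s V[s]$ realises $\cg(\sg)=\prod_s\g(W_s)$, and the product structure of $\SOe$ follows once one knows that $\dim\SOe=\sum_s\dim\0_{\mu^{(s)}}$, for then the closed immersion $\phi$ of the first step is forced to be surjective. I expect this dimension equality to be the main obstacle. It is clean in the single-block case, but for several parts it is genuinely global: the comparison of $\g^{e+e_0}$ with $\g^e$ involves off-diagonal maps between distinct blocks, so that a naive blockwise count does not suffice — in particular when two parts $s,s{+}1$ are consecutive the blocks $[(s{+}1)^{a_s},\dots]$ and $[(s{+}2)^{a_{s+1}},\dots]$ share a common Jordan-block size and must be handled with care. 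The correct tool is the codimension result recorded in \S\ref{subsect:canceling_rows_cols} (from \cite[\S13]{Kraft-Procesi:classical}), applied to the irreducible pair obtained from $(\lambda,\nu)$ after removing all common rows and columns; verifying that this global codimension equals $\sum_s\dim\0_{\mu^{(s)}}$, and hence that $\phi$ is onto, is the crux of the argument, everything else being the formal package of normality in $\mathfrak{gl}_N$ together with $\theta$-descent.
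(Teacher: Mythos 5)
Your opening step is the same as the paper's: the product sits inside the slice as $e+\overline{C(\sg)\cdot e_0}$, and everything hinges on showing this inclusion is an equality. But the two things you defer are precisely the two things the paper has to work for, and neither is closed in your proposal. The first is the dimension equality for several blocks, which you yourself call "the crux" and then only name a tool for (the codimension results of \S\ref{subsect:canceling_rows_cols}) without carrying out any verification; that is not a proof. Worse, the reduction to $\mathfrak{gl}$ that powers your single-block argument cannot be the engine here, because the $\mathfrak{gl}$-analogue of the proposition is false as soon as two interacting blocks carry a nontrivial $e_0$: in $\mathfrak{gl}_9$ take $\lambda=[2^3,1^3]$ and $e_0$ with partition $[2,1]$ in each $\GL_3$-factor of the reductive centralizer; then $\nu=[3,2^2,1^2]$, the codimension of $\0_\lambda$ in $\overline{\0}_\nu$ is $10$, while $\sum_s\dim\0_{\mu^{(s)}}=8$. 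So there is no $\mathfrak{gl}$-isomorphism to take $\theta$-fixed points of, and the classical equality, where it holds, genuinely uses the orthogonal/symplectic structure. The paper gets it in one stroke from \cite[Cor. 4.9]{FJLS}: since each $\mu^{(s)}$ has the form \eqref{height_two}, the element $e_0$ has partition $[2^a,1^{N-2a}]$ in $\g$, i.e.\ height $2$, and that height-two result yields $\dim\SOe=\dim C(\sg)\cdot e_0$.

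The second gap is your concluding step: "a closed immersion between varieties of equal dimension is forced to be surjective" requires the target to be irreducible, and irreducibility (or unibranchness at $e$) of $\SOe$ is not free in the classical groups --- slices there can be reducible, as the type $e$ degenerations of \cite{Kraft-Procesi:classical} (two components of type $A_{2k-1}$) show. Your $\theta$-descent supplies irreducibility only in the single-block case, where it is inherited from the normal, hence irreducible, $\mathfrak{gl}$-slice; nothing plays this role in your general assembly. The paper supplies it by proving that $\overline\0$ is normal at $e$: every minimal degeneration of $\0$ whose closure contains $e$ is of Kraft--Procesi minimal type ($a$, $f$, $g$, or $h$), so the argument of \cite[Thm 16.2]{Kraft-Procesi:classical} applies, and then \cite[Cor 13.3]{Kraft-Procesi:classical} converts the dimension equality into the asserted isomorphism. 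Your single-block route via normality of $\mathfrak{gl}$-orbit closures plus $\theta$-fixed points is an attractive alternative to that normality step, but with the multi-block dimension equality unproved and the irreducibility of the slice unaddressed, the proposal does not prove the proposition.
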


\begin{proof}
	The partition of $e^{(s)}_0$ in
	$\gg$ (rather than $\gg^i$) is equal to
	$$[2^{s \cdot a_s}, 1^{N - 2s \cdot a_s}]$$ where $N$ is the dimension of $V$.
	Setting $a = \sum_ s a_s$, the partition of $e_0$ is
	equal to $$[2^{a}, 1^{N - 2a}], $$ which is of height $2$ in $\g$.  
	Then Corollary 4.9 in \cite{FJLS}
	implies that ${\mathcal S}_{\0,e}$ and $\prod_s \overline\0_{\mu^{s}}$
	have the same dimension.  The latter is isomorphic to 
	$$f + \overline{C(\sg)\cdot (e+e_0)} = f +e+\overline{C(\sg)\cdot e_0},$$
	which is a subvariety of ${\mathcal S}_{\0,e}$.   The result follows from \cite[Cor 13.3]{Kraft-Procesi:classical} 
	if we can show that  $\overline \0$ is normal at $e$.
	
	Since the only minimal degeneration of $[2^{a_s}, 1^{b_s}]$ in $\gg^s$ is $[2^{a_s-1}, 1^{b_s+2}]$ when $a_s>1$
	is of minimal type (that is, types $a$, $f$, $g$, or $h$ in \cite[Table 1]{Kraft-Procesi:classical}),
	the only minimal degenerations of $\0$ that contain $e$ are also of minimal type.  The argument 
	in \cite[Thm 16.2]{Kraft-Procesi:classical} then shows that $\overline \0$ is normal at $e$.
	\end{proof}

We can now prove the $g_{sp}$ case. 
\begin{corollary}
	Let $\0 = \0_\lambda$ and $e \in \0_\mu$, where $(\lambda,\mu)$ are of type $g_{sp}$ in Table \ref{codim4ormore}.
	Then ${\mathcal S}_{\0,e}$ 
	is isomorphic to 
	$\overline{\0}_{[2^2,1^{2n-4} ]}$, the closure of the minimal special orbit in $\mathfrak{sp}_{2n}$.
\end{corollary}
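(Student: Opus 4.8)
The plan is to realize the type $g_{sp}$ degeneration directly through the construction of Proposition \ref{codimension_classical}, choosing a representative for which $e$ is genuinely nonzero so that ${\mathcal S}_{\0,e}$ is a true slice rather than all of $\g$. Since the smooth equivalence class of $\SOe$ is preserved by the Kraft--Procesi row and column operations of \S\ref{subsect:canceling_rows_cols}, and is unchanged under $\epsilon$-row-equivalence by Proposition \ref{slice_row_equiv}, I would replace the irreducible pair by the local representative produced in \S\ref{combinatorial_classification},
\begin{equation*}
	\lambda = [(s{+}2)^2,\, (s{+}1)^{2n-4},\, s^2], \qquad \mu = [(s{+}1)^{2n}],
\end{equation*}
with $s$ even (so $s \not\equiv \epsilon = 1$); using column-equivalence I may further take $s \geq 2$, so that $e \neq 0$.

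First I would fix $e \in \0_\mu$ and identify its reductive centralizer. The partition $\mu$ has the single distinct part $s{+}1$, which is odd (hence $\equiv \epsilon$) with multiplicity $2n$, so $\cg(\sg) \cong \mathfrak{sp}(2n)$. I would then take $e_0 \in \cg(\sg) \cong \mathfrak{sp}(2n)$ nilpotent with partition $\mu^{(s+1)} = [2^2, 1^{2n-4}]$, which satisfies the height-two hypothesis \eqref{height_two} with $a_{s+1} = 2$ and $b_{s+1} = 2n-4$. By Lemma \ref{lemma:partition_adding}, specialized to height-two partitions as in the formula following \eqref{height_two}, the part $s{+}1$ together with $[2^2, 1^{2n-4}]$ contributes exactly $[(s{+}2)^2, (s{+}1)^{2n-4}, s^2]$, so $e + e_0$ has partition $\lambda$ and $\0 = \0_\lambda$ is the orbit through $e + e_0$. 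Proposition \ref{codimension_classical} then gives ${\mathcal S}_{\0,e} \cong \prod_t \overline{\0}_{\mu^{(t)}}$, and since $s{+}1$ is the only distinct part of $\mu$ the product collapses to the single factor $\overline{\0}_{[2^2, 1^{2n-4}]}$ in $\mathfrak{sp}(2n)$, the closure of the minimal special orbit.

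The argument is essentially bookkeeping, and the one point requiring care is parity: after restoring columns the distinguished part $s{+}1$ of $\mu$ must remain odd, so that the corresponding factor of $\cg(\sg)$ is the symplectic group $\mathfrak{sp}(2n)$ rather than an orthogonal group and $[2^2, 1^{2n-4}]$ is an admissible nilpotent partition there; this is exactly the condition $s \not\equiv \epsilon$ recorded in Table \ref{codim4ormore}. Combined with the reduction to a single-part $\mu$ via Proposition \ref{slice_row_equiv} — which guarantees $\cg(\sg)$ has no further factors and hence the product has only the single factor above — this is all that is needed to match the hypotheses of Proposition \ref{codimension_classical}. (For $n = 2$ the $1^{2n-4}$ block is empty and $\mu^{(s+1)} = [2^2]$, with no change to the conclusion.)
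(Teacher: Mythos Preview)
Your proof is correct and follows essentially the same route as the paper's: reduce to the local form where $\mu$ has a single distinct part of multiplicity $2n$, take $e_0$ with partition $[2^2,1^{2n-4}]$ in that factor of $\cg(\sg)$, and invoke Proposition~\ref{codimension_classical}. One small slip: you write ``$s$ even (so $s\not\equiv\epsilon=1$)'', but the $g_{sp}$ degeneration can occur in any of types $B$, $C$, $D$, so $\epsilon$ need not be $1$; what matters is only that $s\not\equiv\epsilon$ forces $s{+}1\equiv\epsilon$, giving the symplectic factor $\mathfrak{sp}_{2n}$ regardless of $\epsilon$---and your concern about $e=0$ is unnecessary, since in the irreducible form the slice at $0$ is simply $\overline{\0}_\lambda$ itself.
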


\begin{proof}
If $k$ is the number of columns removed, then $s=k+1$ is the relevant part of $\mu$ and $m(s) = 2n$. 
We take $e_0$ to be exactly equal to $e^{(s)}_0 \in \gg^s$  with partition $\mu^s = [2^2,1^{2n-4}]$.  
Then the result follows from the proposition.  Since $k \not \equiv \epsilon$ for the $g_{sp}$ case,
we have $s \equiv \epsilon$ and  $\gg^s = \mathfrak{sp}_{2n}$.
\end{proof}

\begin{remark}
The $h$ case in the table, already done in \cite{Kraft-Procesi:classical}, corresponds to the situation where $s \not \equiv \epsilon$  and $m(s)$ is even, as well as having $\0_\lambda$ is special (so the condition on $l$ holds).   Of course, the slice is still of type $h$ even if the degeneration is not a minimal special one.
\end{remark}

\subsection{Type $f_{sp}^{1}$ and $f_{sp}^{2}$}

These cases occur when $(\lambda, \mu)$ is $\epsilon$-row equivalent to 
$(\lambda^{(i)}, [a^{N}])$ where $N$ is odd and $a \not\equiv \epsilon$ and
\begin{eqnarray*}\label{eqn:lambda1_2}
\lambda^{(1)}= [a{+}2, a^{N-3},(a{-}1)^2] & \text{         (type } f_{sp}^{1}) \\
\lambda^{(2)}=[(a{+}1)^2, a^{N-3},a{-}2]  & \text{    (type } f_{sp}^{2}) 
\end{eqnarray*}
with $a\geq 1$ when $i=1$ and $a \geq 2$ when $i=2$.
By Proposition \ref{slice_row_equiv}, it is enough to assume that $\mu = [a^N ]$ and $\lambda$ equals $\lambda^{(1)}$  or $\lambda^{(2)}$.
Since we will need it for the next section,we consider the more general case $N$ is any integer with $N \geq 3$.
\begin{proposition}\label{f_cases}
	Let $\g = \mathfrak{so}_{aN}$ if $a$ is odd and $\mathfrak{sp}_{aN}$ if $a$ is even.
	 Let $e \in \0_\mu$.  
	For $i \in \{1,2\}$, let $\0 = \0_{\lambda^{(i)}}$.
	Then there is an isomorphism
	$${\mathcal S}_{\0, e} \simeq 	\overline{\0}_{[3,1^{N-3}]}$$
	where the orbit closure is in $\mathfrak{so}_{N}$.  
\end{proposition}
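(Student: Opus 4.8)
The plan is to reduce the statement to the height-two machinery already developed in Proposition \ref{codimension_classical}, exactly as was done for the $g_{sp}$ case. The key observation is that both $\lambda^{(1)}$ and $\lambda^{(2)}$ differ from $\mu = [a^N]$ only in the parts congruent to $a \pmod 2$, i.e.\ in the block built from the part $s = a$, and that the orbit $\0_{\lambda^{(i)}}$ should be realized as $e + e_0$ for a suitable nilpotent $e_0 \in \cg(\sg)$ whose $\gg^s$-component has a height-two partition of the form \eqref{height_two}. Here $\sg$ is an $\sl_2$-triple through $e \in \0_\mu$, and since all parts of $\mu$ equal $a$, the group $C(\sg)$ has a single factor $G^a$, which is $\Or(N)$ if $a \not\equiv \epsilon$ (equivalently $a$ odd when $\g = \so$, $a$ even when $\g = \sp$, matching the hypothesis on $\g$) and this is precisely the orthogonal case we want, so $\gg^a = \so_N$.

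First I would verify the arithmetic: with $\mu = [a^N]$ and $e_0 = e_0^{(a)} \in \gg^a$ of partition $\mu^{(a)} = [2, 1^{N-2}]$, Lemma \ref{lemma:partition_adding} gives that $e + e_0$ has partition the union of $[a{+}1, a{-}1]$ (from the single part $\mu^{(a)}_j = 2$) together with $N-2$ copies of $[a]$ (from the parts equal to $1$), i.e.\ $[a{+}1, a^{N-2}, a{-}1]$. This is the minimal (special) orbit $[3, 1^{N-2}]$-degeneration inside $\gg^a = \so_N$ transported up, and I would check it matches one of $\lambda^{(1)}, \lambda^{(2)}$ after the $X$-collapse; the two cases $i=1,2$ correspond to whether the corner part $a$ has $h(a)$ even or odd, which determines via Lemma \ref{f_map} whether the collapse pushes the extra box up or down, producing $[a{+}2, a^{N-3}, (a{-}1)^2]$ versus $[(a{+}1)^2, a^{N-3}, a{-}2]$. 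Once the partition identification is confirmed, Proposition \ref{codimension_classical} applies verbatim (the single factor being $\gg^a = \so_N$), yielding
\begin{equation*}
	{\mathcal S}_{\0, e} \simeq \overline{\0}_{\mu^{(a)}} = \overline{\0}_{[2,1^{N-2}]} \subset \so_N.
\end{equation*}

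The remaining discrepancy is that the target in the statement is $\overline{\0}_{[3, 1^{N-3}]}$ rather than $\overline{\0}_{[2, 1^{N-2}]}$. Here I would note that $[2, 1^{N-2}]$ is not an orthogonal partition when $N$ is such that the height condition fails, so the $\so_N$-orbit that $e_0^{(a)}$ actually represents is the $D/B$-collapse of $[2, 1^{N-2}]$, which is exactly the minimal nilpotent orbit $[3, 1^{N-3}]$ in $\so_N$ (a part of size $2$ cannot appear with odd multiplicity in an orthogonal partition, so the genuine minimal orbit carries partition $[3, 1^{N-3}]$). Thus $\overline{\0}_{\mu^{(a)}}$ interpreted inside $\gg^a = \so_N$ is precisely $\overline{\0}_{[3,1^{N-3}]}$, matching the claim.

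I expect the main obstacle to be the bookkeeping of the $X$-collapse and parity conditions that distinguish the cases $i=1$ and $i=2$: one must confirm that the nilpotent $e_0^{(a)}$ produces the stated $\lambda^{(i)}$ \emph{after} collapse to the ambient type, and in particular verify that Proposition \ref{codimension_classical}'s normality hypothesis at $e$ holds here. Normality follows from the cited \cite[Thm 16.2]{Kraft-Procesi:classical} argument since the only minimal degenerations of $[2,1^{N-2}]$ in $\gg^a$ are of minimal type, so the minimal degenerations of $\0$ containing $e$ are likewise of minimal type; this is the same mechanism used at the end of the proof of Proposition \ref{codimension_classical}, and I would simply invoke it. The reduction via Proposition \ref{slice_row_equiv} to the case $\mu = [a^N]$ has already been granted, so no further slice-comparison argument is needed beyond checking condition \eqref{row_cancellation_condition} is compatible, which it is by the maximality of the cancellation.
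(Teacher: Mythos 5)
Your reduction to Proposition \ref{codimension_classical} cannot work, because the orbits $\0_{\lambda^{(i)}}$ are not of the form $G\cdot(e+e_0)$ for \emph{any} nilpotent $e_0\in\cg(\sg)\simeq\so_N$. By Lemma \ref{lemma:partition_adding}, the partition of $e+e_0$ is the union, over the parts $p$ of the partition of $e_0$, of the strings $[a{+}p{-}1,a{+}p{-}3,\dots,|a{-}p|{+}1]$. To produce the part $a{+}2$ of $\lambda^{(1)}$ you need a part $p=3$, but its string also produces a part $a{-}2$, which $\lambda^{(1)}$ does not have; to produce the two parts $a{+}1$ of $\lambda^{(2)}$ you need two parts $p=2$, but each such string also produces a part $a{-}1$, which $\lambda^{(2)}$ does not have. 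The only orbits reachable linearly from $\cg(\sg)$ near this degeneration are $[(a{+}1)^2,a^{N-4},(a{-}1)^2]$ (taking $e_0$ in the genuine minimal orbit $[2^2,1^{N-4}]$ of $\so_N$; this is the intermediate non-special orbit, giving the type $h$ slice) and $\nu=[a{+}2,a^{N-2},a{-}2]$ (taking $e_0\in\0_{[3,1^{N-3}]}$), and neither equals $\lambda^{(i)}$. Moreover your starting datum does not exist: $[2,1^{N-2}]$ is not an orthogonal partition, and your attempted repair runs the collapse in the wrong direction --- the collapse satisfies $\lambda_X\preceq\lambda$, so the $B/D$-collapse of $[2,1^{N-2}]$ is $[1^N]$, not $[3,1^{N-3}]$ (which is the minimal \emph{special} orbit of $\so_N$; the minimal orbit is $[2^2,1^{N-4}]$). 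Likewise the collapse of your formal union $[a{+}1,a^{N-2},a{-}1]$ is $\mu=[a^N]$ itself, not $\lambda^{(i)}$, so the ``partition identification'' step fails; note also that both $\lambda^{(1)}$ and $\lambda^{(2)}$ occur simultaneously above the same $\mu$ in the same $\g$, so they cannot be distinguished by any parity attached to $\mu$.

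The missing idea --- and the actual content of the paper's proof --- is that ${\mathcal S}_{\0_{\lambda^{(i)}},e}$ sits inside the Slodowy slice as the graph of a \emph{quadratic}, not linear, map on $\overline{\0}_{[3,1^{N-3}]}$. In the explicit matrix model of $\g^f$, whose coordinates are $Y_1\in\so_N$, $Y_2\in\Sigma_N$ (symmetric matrices), etc., one shows that $\0_{\lambda^{(i)}}\cap{\mathcal S}_e$ consists of the elements $e+\phi(A,z_iA^2,0,\dots,0)$ with $A\in\overline{\0}_{[3,1^{N-3}]}$, where the two constants $z_1\neq z_2$ are exactly what distinguishes the two orbits $\0_{\lambda^{(1)}},\0_{\lambda^{(2)}}$, both of which are codimension-two degenerations of $\0_\nu$. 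Establishing this requires the exceptional-pair analysis in the base case $N=3$, a bootstrap embedding $\so_3\hookrightarrow\so_N$, the codimension count coming from the column-cancellation equivalence of $(\lambda^{(i)},\mu)$ with $([3,1^{N-3}],[1^N])$, and unibranchness of $\overline{\0}_{\lambda^{(i)}}$ at $e$. None of this is obtainable from Proposition \ref{codimension_classical}: its hypothesis \eqref{height_two} restricts to height-two partitions precisely because a part of size $3$ in $e_0$ forces the quadratic correction term that your argument omits.
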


\begin{proof}
If $a=1$, the $\lambda^{(1)}$ case is clear since there is equality between the slice and the orbit closure, 
so assume $a \geq 2$.
The situation is very similar to \S 11.2 in \cite{FJLS}.
Let $I_N$ be the $N \times N$ identity matrix.
Let us define the form on $V$ explicitly to be the one defined by the 
$a \times a$ block anti-diagonal matrix $J$ with
$$J = \begin{pmatrix} 
	0 & 0 &  \dots &0 & 0 & I_N  \\
   0 & 0 &  \dots& 0  &-I_N & 0  \\
 0 &  0 & \dots & I_N &0 & 0 \\
\dots \\
(-1)^{a-1} I_N & 0 &  \dots & 0  & 0 & 0 
\end{pmatrix}.$$
The bilinear form defined by $J$ is nondegenerate and 
 is symmetric if $a$ is odd and symplectic if $a$ is even.
Since $a \not\equiv \epsilon$, this is the correct form for defining $\g= \g(V_\epsilon)$.  

The $a \times a$-block-matrices $e$ and $f$ given by 
\begin{eqnarray}
	e = \begin{pmatrix} 
		0 & 0 & \dots& 0 & 0  \\
		c_1 I_N &  0 &\dots & 0  & 0  \\
		0 &c_2  I_N&  \dots & 0 & 0 \\
		&&\dots &&\\
		0 & 0&   \dots & c_{N-1} I_N & 0
\end{pmatrix},  &
	f = \begin{pmatrix} 
		0 & 0 &   \dots& 0 & 0  \\
		I_N & 0 &\dots & 0  & 0  \\
		0 & I_N&  \dots & 0 & 0 \\
		&&\dots &&\\
		0 & 0&   \dots & I_N & 0
	\end{pmatrix}^T
\end{eqnarray}
with $c_j = j(N-j)$
lie in $\g$ and $e$ and $f$ are both nilpotent with partition $\mu$.  They complete to an $\sl_2$-triple as in \cite[\S 11.2]{FJLS}.
The centralizer $\gg^f$ is the set of block upper triangular matrices of the form 
$$X = \begin{pmatrix} 
	Y_1 & Y_2 & Y_3 &...\text{   } Y_{a-2}& Y_{a-1} & Y_{a}  \\
	0 &Y_1 & Y_2 &\dots & Y_{a-2} & Y_{a-1} \\
	0 & 0 &  Y_1 &\dots & Y_{a-3} & Y_{a-2}  \\
	&&&\dots &&\\
		0 & 0&   0 &\dots & Y_2  & Y_3 \\
	0 & 0&   0 &\dots & Y_1 & Y_2 \\
	0 & 0&   0 &\dots& 0& Y_1
\end{pmatrix}.$$
Then $X$ lies in $\g$ if and only if $Y_i = (-1)^i Y^T_i$.  Let $\Sigma_N$ denote the set of $N \times N$ symmetric matrices and set
$$\phi: \mathfrak{so}_N \times \Sigma_N \times \mathfrak{so}_N \times \dots  \to \g^f$$ denote the map
where $\phi(Y_1, Y_2, \dots)$ is given by the matrix $X$ above.
The reductive centralizer $\cg(\sg) \simeq \mathfrak{so}_N$ is given by $Y_i=0$ for $i  \geq 2$;
similarly $C(\sg)\simeq \rm O(N)$.
An element $g \in C(\sg)$ acts on $e+X \in \mathcal S_e$ by sending $Y_i$ to $gY_ig^T$. The $\C^*$-action on $\mathcal S_e$
is given by $t.Y_i = t^{2i}Y_i$ for $t \in \C^*$.

Let $e_0 \in \cg(\sg)$ be a nilpotent element with partition $[3,1^{N-3}]$.  
Pick an $\mathfrak{sl}_2$-triple $\sg_0$ through $e_0$ in $\cg(\sg)$ and assume that the semisimple element $h_0$ is a diagonal matrix.
By Lemma \ref{lemma:partition_adding} or computing $h+h_0$ directly, we see that
$e+e_0$ has partition $\nu:=[a{+}2, a^{N-2},a{-}2]$ since $a \geq 2$.  

Let $N=3$.  By \cite{Kraft-Procesi:classical} we have
${\mathcal S}_{\0_{\lambda^{(i)}},e} \simeq 	\overline{\0}_{[3]}$ since for $i{=}1$ the degeneration 
is type $c$ and for $i{=}2$ it is type $d$ in Table \ref{codim2}.   
Set $A = e_0$, then $A^2 \in \Sigma_N$.
Then $\phi(0,A^2, 0, \dots)$ is an eigenvector for both $\ad(h)$ and $\ad(h_0)$, with eigenvalue $-2$ and $4$, respectively.
Since the absolute values of the eigenvalues of $\ad(h_0)$ on $\zg(e)$ are at most $4$ and the eigenvalue $4$ only occurs once in $\Sigma_N$,
there are no other exceptional pairs in the sense of \cite[\S 4]{FJLS}.
It follows that $e+\phi(A, z_i A^2, 0, \dots,0) \in \0_{\lambda^{(i)}}$ for a unique $z_i \in \C^*$.
Since ${\mathcal S}_{\0_{\lambda^{(i)}},e}$ has dimension two and is irreducible, this
means it is exactly the set of elements $e+\phi(A, z_i A^2, 0, \dots,0)$ where $A \in  \mathfrak{so}_3$ is nilpotent, 
giving the isomorphism to $\overline{\0}_{[3]}$ explicitly.

Now consider the general $N$ case.  
We can embed $\mathfrak{so}_3$ into $\cg(\sg) \simeq \mathfrak{so}_N$  via the first $3$ coordinates 
and similarly for the rest of the centralizer of $e$ in  the $N=3$ case.  
Clearly, for $A \in \0_{[3]}$, then $\phi(A, 0 \dots, 0) \in \cg(\sg)$ lies in $\0_{[3,1^{N-3}]}$,
but also $e + \phi(A, z_iA^2, 0, \dots, 0) \in {\mathcal S}_e$  lies in $\0_{\lambda^{(i)}}$, by observing
the action of this element on the standard basis of $V$.  It follows, using the action of $C(\sg)$, that  
${\mathcal S}_{\0_{\lambda^{(i)}},e}$ contains $e + A+z_iA^2$ for $A \in \overline{\0}_{[3,1^{N-3}]}$.

Next, $\dim \0_{\lambda_1} = \dim \0_{\lambda_2}$ since both orbits are minimal degenerations from  
$\0_\nu$ of type $a$, hence they are codimension two in $\overline{{\mathcal \0_\nu}}$.
The pair $(\lambda_1, \mu)$ is equivalent to $([3,1^{N-3}], [1^N])$ after canceling $a{-}1$ columns, 
thus the codimension of $\0_\mu$ in $\overline{\0_{\lambda^{(i)}}}$  equals the dimension of $\overline{\0}_{[3,1^{N-3}]}$
for both $i=1$ and $i=2$.
The only minimal degeneration from $\0_{\lambda^{(i)}}$ that contains $\0_\mu$ is to
the partition $[(a{+}1)^2, a^{N-4},(a{-}1)^2]$, which an $A_1$ singularity for both $i=1$ and $i=2$.  
Hence, as in \S \ref{gsp_case},
we have $\overline{\0_{\lambda^{(i)}}}$  is unibranch at $e$.  Thus ${\mathcal S}_{\0_{\lambda^{(i)}},e} \simeq  \overline{\0}_{[3,1^{N-3}]}$.
\end{proof}


\subsection{Type $h_{sp}$}\label{Subsect:h_sp}  

As in the previous subsection, we are reduced by Proposition \ref{row_equivalent} 
to the case where $\lambda = [a{+}2, a^{N-2},a{-}2]$ and $\mu =[a^N]$.  We have 
the same description for $e \in \0_\mu$, $\sg$, etc. as above.
In the previous subsection, ${\mathcal S}_{\0_{\lambda^{(i)}},e}$ was the closure of a $C(\sg)$-orbit.
We will first show that 
${\mathcal S}_{\0_{\lambda},e}$ is the closure of a $C(\sg) \times \C^*$-orbit for $N \geq 2$, where
the $\C^*$-action is as above.

The arguments from \cite[\S 11.2.2]{FJLS} apply.  First, we use them to show 
that for $M \in {\mathcal S}_{\0_{\lambda},e}$, the matrices $Y_3, Y_4, \dots$ are equal to sums of products of $Y_1$ and $Y_2$.  
This follows since $\mathrm{rank}(M^i) \leq N(a-i)$ for $i = 1, \dots, a-2$ as in loc. cit.
However, $\mathrm{rank}(M^{a-1}) \leq N+1$ unlike in loc. cit. and this implies that 
\begin{equation}\label{eqn:rank}
\mathrm{rank}(Y_2- dY_1^2)\leq 1
\end{equation} 
for some $d \in \C^*$.
The condition $M^{a+2}=0$ yields the equation,  in the block lower left corner, 
\begin{equation} \label{firstY2}
	d_1Y_3+d_2(Y_1Y_2+Y_2Y_1)+d_3Y_1^3=0
\end{equation}
where $d_1 = \frac{(a+2)!(a-1)!}{6}$ and $d_3 = \frac{(a-1)(a-2)}{5} \cdot d_1$.
It follows that the $C(\sg) \times \C^*$-equivariant map from ${\mathcal S}_{\0_{\lambda},e}$ to 
the space $e+X$ where $Y_i=0$ for $i \geq 3$ is an isomorphism.    

Now for $N \geq 3$,  there are actually two equations involving a linear term for $Y_3$.  
The one from the lower left corner of  $M^{a+2}=0$ and the one from the $\mathrm{rank}(M^{a-2}) = N$ condition:
\begin{equation} \label{secondY2}
c_1Y_3+c_2(Y_1Y_2+Y_2Y_1)+c_3Y_1^3=0.
\end{equation}
where $c_1 = \frac{a!(a-1)!^2(a-2)!^2(a-3)!}{24}$ and  $c_3= \frac{(a+1)(a+2)}{5} \cdot c_1$.
The equations \eqref{firstY2} and \eqref{secondY2} are not multiples of each other since 
$\frac{d_3}{d_1}  \neq \frac{c_3}{c_1}$ for $a>0$.
It follows, by canceling the $Y_3$ term, that
\begin{equation}\label{eqn:y03}
tY_1^3 = Y_1Y_2+Y_2Y_1
\end{equation}
for some nonzero $t$.

Consider the $N=2$ case.   Conjugating in $GL_2$ so that $C(\sg)$ becomes the diagonal torus in $SL_2$, we can represent 
$Y_1 = \left( \begin{smallmatrix} 
	x & 0  \\
	0 & -x
\end{smallmatrix}\right)$ and 
$Y_2 =  \left(\begin{smallmatrix} 
	y& z  \\
	w & y\end{smallmatrix}\right)$
for $x,y,z,w \in \C$.
Then \eqref{eqn:y03} implies that either $x$ is identically $0$, i.e., $Y_1\equiv0$ or $y=\frac{t}{2} x^2$, i.e., $\tr(Y_2-tY_1^2)=0$.
By \eqref{eqn:rank}, $\det(Y_2-dY_1^2)=0$. 
If $Y_1 \equiv 0$, then $\det(Y_2)=y^2-zw=0$ and $x=0$ are the conditions defining the slice.  
If $t=d$, the condition is $zw=0$, with $x$ arbitrary.  
Since $([a{+}2, a{-}2], [a^2])$ is a minimal degeneration of type $b$, the slice is isomorphic to the $A_3$-simple surface singularity,
so neither of these cases hold.  Instead, $d \neq t$ and the defining equations are $y=  \frac{t}{2} x^2$
and $t^2x^4  = 4wz$, which is indeed an $A_3$-singularity.
Moreover, the points with $x \neq 0$ form a single $C(\sg)\times \C^*$-orbit, each of which has finite stabilizer, so this orbit
is dense in the slice.   Let $v=e+\phi(v_1, v_2, \dots)$ be such a point in the slice.

Next, as in \S\ref{f_cases},  we bootstrap up to the general case by embedding the slice for the $N=2$ case into the general slice by using the first two coordinates in each block.  Since the coefficients in the equations given above continue to hold for
${\mathcal S}_{\0_{\lambda},e}$  in ${\mathcal S}_e$, independent of $N$, we do indeed have a $\SO_2\times \C^*$-equivariant 
embedding of the $N=2$ case.  
Note that $v_1 \in \so_N$ is a multiple of a $C(\sg)$-conjugate of $h_0 $ from \S \ref{f_cases}. Is stabilizer in $C(\sg)$ is 
$\SO_2 \times \SO_{N-2} \subset C(\sg)$.  From the $N=2$ case it follows that the connected stabilizer in $C(\sg)\times\C^*$of $v$ is
$\SO_{N-2}$.
Hence, the $C(\sg)\times \C^*$-orbit through $v$ has dimension $(N(N-1)/2 + 1 )- (N-2)(N-3)/2=2N-2$.  
This is also the dimension ${\mathcal S}_{\0_{\lambda},e}$. 
Since $\overline{\0}_\lambda$ is unibranch at $e$, we conclude

\begin{proposition}\label{prop:dense_shared_orbit}
	The slice ${\mathcal S}_{\0_{\lambda},e}$ is isomorphic to the closure of a $C(\sg)\times \C^*$-orbit through 
	$(A,B) \in \so_N \times \Sigma_N$ 
	where $A = h_0 \in \so_N$, and $B\in \Sigma_N$ satisfies $\mathrm{rank}(B-dA^2) = 1$
	and $\tr(B-tA^2)=0$ for some nonzero $d,t$ with $d \neq t$.
\end{proposition}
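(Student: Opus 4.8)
The plan is to package the preceding analysis into the single-orbit statement, so that the proof becomes a matter of fixing the correct base point and verifying a dimension count. First I would choose the base point $v = e + \phi(A, B, 0, \dots)$, taking $A = h_0$ (viewed inside $\cg(\sg) \simeq \so_N$) and $B \in \Sigma_N$ to be any symmetric matrix with $\mathrm{rank}(B - dA^2) = 1$ and $\tr(B - tA^2) = 0$, where $d$ and $t$ are the nonzero scalars supplied by \eqref{eqn:rank} and \eqref{eqn:y03}. The reduction established above --- that on $\mathcal{S}_{\0_{\lambda},e}$ the matrices $Y_3, Y_4, \dots$ are forced to equal prescribed polynomials in $Y_1 = A$ and $Y_2 = B$ --- shows that $\phi$ restricts to a $C(\sg) \times \C^*$-equivariant isomorphism of $\mathcal{S}_{\0_{\lambda},e}$ onto its image in the $(Y_1, Y_2)$-plane, cut out there precisely by \eqref{eqn:rank} and \eqref{eqn:y03}; in particular $v$ lies in the slice.

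Next I would confirm that $v$ sits on the dense orbit, which is where the genuine content lies. The $N=2$ computation identifies the reduced slice with the $A_3$ surface singularity and, crucially, forces $d \neq t$, ruling out the two degenerate possibilities $Y_1 \equiv 0$ and $t = d$. The bootstrap embedding of this $N=2$ slice through the first two block coordinates --- valid because the coefficients in \eqref{firstY2} and \eqref{secondY2} are independent of $N$ --- then pins down the connected stabilizer of $v$ in $C(\sg) \times \C^*$ as $\SO_{N-2}$: the stabilizer of $A$ is $\SO_2 \times \SO_{N-2}$, and the residual $\SO_2 \times \C^*$ acts with finite stabilizer on the $A_3$-slice. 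Counting dimensions gives
$$\dim\bigl((C(\sg) \times \C^*)\cdot v\bigr) = \tfrac{N(N-1)}{2} + 1 - \tfrac{(N-2)(N-3)}{2} = 2N-2,$$
which is exactly $\dim \mathcal{S}_{\0_{\lambda},e}$.

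Finally, since $\overline{\0}_\lambda$ is unibranch at $e$, the slice $\mathcal{S}_{\0_{\lambda},e}$ is irreducible, and an irreducible variety containing an orbit of full dimension equals the closure of that orbit. Hence $\mathcal{S}_{\0_{\lambda},e} = \overline{(C(\sg) \times \C^*)\cdot v}$, which is the assertion. I expect the only real obstacle to be the base case $N = 2$: everything downstream is formal once one knows the reduced slice there is the $A_3$-singularity with $d \neq t$, since it is precisely the distinctness $d \neq t$ that makes \eqref{eqn:rank} and \eqref{eqn:y03} impose independent conditions and single out a single dense orbit rather than a positive-dimensional family of them.
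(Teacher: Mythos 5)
Your proposal follows the same skeleton as the paper's argument (projection to the $(Y_1,Y_2)$-plane, the $N=2$ analysis, the bootstrap embedding, the stabilizer and dimension count, and the unibranch conclusion), but it has a genuine gap at the pivotal step: the claim that the image of ${\mathcal S}_{\0_{\lambda},e}$ in the $(Y_1,Y_2)$-plane is \emph{cut out precisely} by \eqref{eqn:rank} and \eqref{eqn:y03}, and hence that your base point $v$ (with $A=h_0$ and \emph{any} $B$ satisfying the rank and trace conditions) lies in the slice. The paper only ever derives these equations as \emph{necessary} conditions on the slice, and sufficiency is false. Concretely, write $B=dh_0^2+vv^T$; then \eqref{eqn:y03} amounts to $(h_0v)v^T-v(h_0v)^T=(t-2d)h_0^3$, and expanding in coordinates (using $h_0^3=4h_0$) forces $v$ to be supported on the $2$-dimensional support of $h_0$, whereas the rank-plus-trace conditions constrain only the scalar $v^Tv$ and leave $v$ free on a quadric of dimension $N-1$. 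So a generic $B$ admissible in your sense violates \eqref{eqn:y03}; by your own claimed characterization such a point would not lie in the slice, and indeed it does not. The defining-equation locus is also genuinely larger than the slice: it contains every pair $(A,dA^2+vv^T)$ with $A^3=0$ and $Av=0$, a family whose dimension exceeds $2N-2=\dim{\mathcal S}_{\0_{\lambda},e}$ once $N\geq 6$. Since the density argument presupposes that the $C(\sg)\times\C^*$-orbit through $v$ is \emph{contained} in the slice, the proof collapses here. Note that the proposition asserts the existence of \emph{some} pair $(A,B)$ with those properties whose orbit closure is the slice, not that every such pair works.

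The paper's proof runs the logic in the opposite order, and this is the missing idea: the base point is produced \emph{inside} the slice geometrically, as the image of a dense-orbit point of the $N=2$ slice under the embedding given by the first two coordinates in each block, with membership in $\0_\lambda$ checked on Jordan types exactly as in \S\ref{f_cases}; in the $N=2$ case the slice is \emph{known} to be the $A_3$-singularity by the Kraft--Procesi type $b$ classification, which is what rules out $Y_1\equiv 0$ and $t=d$ and shows the equations do cut out the slice there (a two-dimensional irreducibility argument that is special to $N=2$). Only after this is the $Y_1$-component of the point recognized as a multiple of a $C(\sg)$-conjugate of $h_0$, and then your stabilizer identification $\SO_{N-2}$, the dimension count $2N-2$, and the unibranchness conclusion --- all of which are correct --- finish the proof. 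In short: the equations buy you injectivity of the projection and the constants $d\neq t$; membership of the base point must come from the embedding, not from the equations.
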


Our goal now is to identify the subvariety of $\so_N \times \Sigma_N$ in the proposition with 
the quotient of closure of the orbit $\0_{[3,1^{N-2}]}$ in $\so_{N+1}$.
First, by employing the 
$C(\sg)\times \C^*$-equivariant isomorphism of $\so_N \times \Sigma_N$  to itself sending $(A,B)  \to (A,B-dA^2)$,
it follows that the slice is isomorphic
to the closure of the $C(\sg)\times \C^*$-orbit through $(A,B) \in \so_N \times \Sigma_N$  
where $B$ is now a matrix of rank $1$ and $\tr(B-tA^2)=0$ for some nonzero $t$.

Next, we recall material from \cite[\S 2]{Fu-Juteau-Levy-Sommers:GeomSP}.
Let $X \in \so_{N+1}$ be written as $$\begin{pmatrix} M & u \\ -u^T & 0 \end{pmatrix}$$
where $M \in \mathfrak{so}_N$ and $u\in \C^N$ is a column vector.
Let $\theta$ be the involution of $\so_{N+1}$ given by conjugation of ${\rm diag}(1,1,\ldots ,1,-1) \in {\rm O}_{N+1}$.
Identifying $\so_{N+1}$ with the set of pairs $(M,u)$, we see that $\theta$ maps $(M,u)\mapsto (M,-u)$.
Then the map $\varphi:  \so_{N+1} \to \so_N \times \Sigma_N$ sending $X$ to $(M,uu^T)$ 
induces an ${\rm O}_n \times \C^*$-equivariant isomorphism of $\mathfrak{so}_{n+1}/\langle \theta \rangle$ 
with $\mathfrak{so}_n\times \Xi$ where $\Xi$ is the cone of elements of $\Sigma_N$ of rank at most 1.

We can now state 
\begin{proposition}
	The slice ${\mathcal S}_{\0_{\lambda},e}$ is isomorphic to
	$$\overline{\0}_{[3,1^{N-2}]}/\langle \theta \rangle.$$
\end{proposition}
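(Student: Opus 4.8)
The plan is to transport $\overline{\0}_{[3,1^{N-2}]}$ through the $\varphi$-isomorphism of the preceding discussion and match the result with the orbit closure furnished by (the modified form of) Proposition~\ref{prop:dense_shared_orbit}. First I would record that $\overline{\0}_{[3,1^{N-2}]}$ is stable under $\theta$: since $[3,1^{N-2}]$ has odd parts it is not very even, so its $\mathrm{O}(N+1)$-orbit coincides with its $\mathrm{SO}(N+1)$-orbit, and conjugation by $\mathrm{diag}(1,\dots,1,-1)\in\mathrm{O}(N+1)$ preserves it. Because $\varphi$ induces an $\mathrm{O}(N)\times\C^*$-equivariant isomorphism $\mathfrak{so}_{N+1}/\langle\theta\rangle\xrightarrow{\sim}\so_N\times\Xi$, it restricts to an isomorphism $\overline{\0}_{[3,1^{N-2}]}/\langle\theta\rangle\xrightarrow{\sim}\varphi(\overline{\0}_{[3,1^{N-2}]})$. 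Thus it suffices to identify the subvariety $\varphi(\overline{\0}_{[3,1^{N-2}]})\subseteq\so_N\times\Xi$ with the slice, both now viewed inside $\so_N\times\Sigma_N$.

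Next I would translate the nilpotency of $X=(M,u)$, i.e. $X^3=0$, into relations on $\varphi(X)=(M,B)$ with $B=uu^T$. A direct block computation gives $X^3=0$ if and only if (i) $M^3=MB+BM$ and (ii) $M^2u=(u^Tu)\,u$ (the lower-left block being the transpose of (ii) and the corner entry $u^TMu$ vanishing as $M\in\so_N$); multiplying (ii) on the right by $u^T$ yields $M^2B=(\tr B)B$. Together with the closure conditions $\mathrm{rank}(X)\le 2$, $\mathrm{rank}(X^2)\le1$ and $B\in\Xi$, these cut out $\varphi(\overline{\0}_{[3,1^{N-2}]})$. The salient point, already visible when $N=2$ (where $\so_3$ has regular nilpotents with nonzero skew $2\times2$ block), is that for generic $X$ the block $M$ is \emph{semisimple}: relation (ii) forces $u$ into the top eigenspace of $M^2$, so the nilpotency of $X$ is carried entirely by the interaction of $M$ with the rank-one term $B$.

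Finally I would match this with Proposition~\ref{prop:dense_shared_orbit}, which exhibits the slice as the closure of the $C(\sg)\times\C^*$-orbit through a point $(A,B)$ with $A=h_0$, $B$ of rank one, and $\tr(B-tA^2)=0$. Since $A=h_0$ satisfies $A^3=4A$ and $A^2$ has rank two, I would check that this distinguished $(A,B)$ lifts to $X=(A,u)$ with $uu^T=B$ and that $X$ is nilpotent of partition exactly $[3,1^{N-2}]$, by verifying relations (i)--(ii) and the rank bounds; equivariance of $\varphi$ then places the entire $C(\sg)\times\C^*$-orbit of $(A,B)$ inside $\varphi(\overline{\0}_{[3,1^{N-2}]})$, so the slice is contained in $\varphi(\overline{\0}_{[3,1^{N-2}]})$. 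As both are irreducible of dimension $2N-2$ (the slice by the codimension computation, the orbit closure because $\dim\0_{[3,1^{N-2}]}=2N-2$), they coincide, and combining with the first paragraph gives ${\mathcal S}_{\0_\lambda,e}\cong\overline{\0}_{[3,1^{N-2}]}/\langle\theta\rangle$. The main obstacle is precisely this last lifting/matching step: one must confirm that the semisimple block $A=h_0$ together with the slice's rank-one $B$ assembles to a nilpotent of the correct Jordan type, i.e. that the trace normalization $\tr(B-tA^2)=0$ and the rank-one condition are exactly the shadow of $X^3=0$ under $\varphi$.
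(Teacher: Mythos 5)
Your overall route is the same as the paper's: push $\overline{\0}_{[3,1^{N-2}]}$ through $\varphi$, describe its image, and match it against the orbit-closure description of the slice in Proposition \ref{prop:dense_shared_orbit}. Your first two paragraphs are sound (the paper obtains the quotient identification from \cite[Cor.~2.2]{Fu-Juteau-Levy-Sommers:GeomSP} and, instead of your block computation of $X^3=0$, computes the $N=2$ case explicitly and embeds $\so_3$ into the lower-right corner). The genuine gap is in your final matching step, exactly at the point you flag as the ``main obstacle,'' and the confirmation you defer there in fact fails. Your relation (ii), $M^2u=(u^Tu)\,u$, says that for a liftable point $u$ is an eigenvector of $M^2$ with eigenvalue $u^Tu$; when $A$ is a nonzero multiple of a conjugate of $h_0$ (eigenvalues $\pm 2\alpha$ and $0$) this forces $u$ to lie in the image of $A$ \emph{and} pins the ratio $\mathrm{tr}(B)/\mathrm{tr}(A^2)$ to one universal value, independent of $a$. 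The slice's distinguished point instead satisfies $\mathrm{tr}(B-t'A^2)=0$, where $t'$ is the constant obtained by eliminating $Y_3$ between \eqref{firstY2} and \eqref{secondY2} and shifting by $d$; it depends on the coefficients there, hence a priori on $a$, and nothing in your argument makes it equal to the nilpotency value. This is fatal for literal equality of subvarieties: the ratio $\mathrm{tr}(B)/\mathrm{tr}(A^2)$ is constant along $C(\sg)\times\C^*$-orbits (conjugation preserves it, and the $\C^*$-action scales $A$ by $s^2$ and $B$ by $s^4$, hence both traces by $s^4$), so two orbit closures with different ratios are genuinely different closed subvarieties of $\so_N\times\Sigma_N$. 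If the constants disagree, your distinguished point does not lift to a nilpotent element, the inclusion ${\mathcal S}_{\0_\lambda,e}\subseteq\varphi\bigl(\overline{\0}_{[3,1^{N-2}]}\bigr)$ is false, and irreducibility plus equality of dimensions cannot manufacture an inclusion that is not there.

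The repair is small and is what the paper does implicitly by claiming only an isomorphism: compose with the $C(\sg)\times\C^*$-equivariant automorphism $(A,B)\mapsto(A,\lambda B)$ of $\so_N\times\Sigma_N$ (a companion to the shear $(A,B)\mapsto(A,B-dA^2)$ already used before Proposition \ref{prop:dense_shared_orbit}), choosing $\lambda$ to carry the slice's trace constant to the nilpotency one. After this rescaling your lifting, containment, and dimension argument go through; equivalently, one argues as the paper does that both ${\mathcal S}_{\0_\lambda,e}$ and $\varphi\bigl(\overline{\0}_{[3,1^{N-2}]}\bigr)$ are closures of $C(\sg)\times\C^*$-orbits through points of one and the same shape --- $A$ a nonzero multiple of a conjugate of $h_0$, $B$ of rank one supported on the image of $A$, with some nonzero trace constant --- and any two such orbit closures are equivariantly isomorphic.
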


\begin{proof}
By  \cite[Corollary  2,.2]{Fu-Juteau-Levy-Sommers:GeomSP}, if $Y = \overline{\0}_{[3,1^{N-2}]}$,
then $Y/\langle \theta \rangle \simeq \varphi(Y)$.
As before we can use the $N=2$ case.  
In that case, $X = \begin{pmatrix} 0 & a & b \\ -a & 0 & c \\ -b & 0 & -c \end{pmatrix}$
and 
$$\varphi(X) = \left( \left( \begin{smallmatrix} 
	0 & a  \\
	{-}a &  0 
\end{smallmatrix}\right), 
 \left(\begin{smallmatrix} 
	b^2 & bc  \\
	bc & c^2 \end{smallmatrix}\right)  \right).$$	
The condition for $X$ to be nilpotent is $a^2+b^2+c^2 =0$ and so the image 
is exactly the matrices $(A,B)$ where $\det(B)=0$ and $\tr(B + A^2)=0$.   In the general case, 
we embed $\so_3$ into the lower right corner.  It follows 
from the discussion above and the proof of Proposition \ref{prop:dense_shared_orbit}
that $\varphi(Y)$ is isomorphic to the closure of the $C(\sg)\times \C^*$-orbit through $(A,B)$ with $A \neq 0$,
and hence isomorphic to the slice by Proposition \ref{prop:dense_shared_orbit}.
\end{proof}


Let the Klein four-group $V_4$ act on $\mathfrak{so}_{N+2}$ via the pair of commuting involutions $\theta_1, \theta_2$ 
given by conjugation by $\diag(1,\ldots ,1,-1)$ and $\diag(1,\ldots ,1,-1,1)$, respectively.
Let $\overline{{\mathcal O}}_{[2^2,1^{N-2}]}$ be the minimal orbit in $\mathfrak{so}_{N+2}$ .
Then by \cite[Corollary 2.5]{Fu-Juteau-Levy-Sommers:GeomSP}, for example, it follows that
$$\overline{{\mathcal O}}_{[3,1^{N-2}]} \simeq \overline{{\mathcal O}}_{[2^2,1^{N-2}]}/\langle \theta_1 \rangle.$$

\begin{corollary}\label{Cor:h_sp_sing}
	We have the isomorphism
$${\mathcal S}_{\0_{\lambda},e} \simeq \overline{{\mathcal O}}_{[2^2,1^{N-2}]}/\langle \theta_1, \theta_2 \rangle.$$	
and hence the minimal special degeneration $h_{sp}$ in Table \ref{codim4ormore} is $d_{n+1}/V_4$.
\end{corollary}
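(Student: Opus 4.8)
The plan is to chain together the three isomorphisms already established in the immediately preceding results, treating $N$ as fixed (and keeping track of how it relates to the rank $n$). First I would invoke the previous proposition, which identifies $\mathcal{S}_{\0_\lambda,e}$ with $\overline{\0}_{[3,1^{N-2}]}/\langle\theta\rangle$ inside $\so_{N+1}$, where $\theta$ is conjugation by $\diag(1,\dots,1,-1)$. The second ingredient is the displayed isomorphism $\overline{\0}_{[3,1^{N-2}]} \simeq \overline{\0}_{[2^2,1^{N-2}]}/\langle\theta_1\rangle$ in $\so_{N+2}$, coming from \cite[Corollary 2.5]{Fu-Juteau-Levy-Sommers:GeomSP}. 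Substituting the latter into the former gives $\mathcal{S}_{\0_\lambda,e} \simeq \bigl(\overline{\0}_{[2^2,1^{N-2}]}/\langle\theta_1\rangle\bigr)/\langle\theta\rangle$, and the remaining task is to see that the two quotients combine into a single $V_4$-quotient by the commuting pair $\langle\theta_1,\theta_2\rangle$.

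The key step is therefore to reconcile the two involutions: I would identify the induced $\theta$-action on $\overline{\0}_{[2^2,1^{N-2}]}/\langle\theta_1\rangle$ with the image of $\theta_2$ (conjugation by $\diag(1,\dots,1,-1,1)$). The natural way to do this is via the coordinates $(M,uu^T)$, or rather the rank-$\leq1$ description $\so_N \times \Xi$ from the $\varphi$-isomorphism: the orbit $\overline{\0}_{[2^2,1^{N-2}]}$ in $\so_{N+2}$ can be written in block form where the last two coordinates play the roles governed by $\theta_1$ and $\theta_2$, and conjugation by $\diag(1,\dots,1,-1)$ in the penultimate slot realizes the second involution $\theta_2$. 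Because $\theta_1$ and $\theta_2$ commute and generate $V_4$, the iterated quotient $(\,\cdot\,/\langle\theta_1\rangle)/\langle\theta_2\rangle$ coincides with $\cdot/\langle\theta_1,\theta_2\rangle$, giving
\[
\mathcal{S}_{\0_\lambda,e} \simeq \overline{\0}_{[2^2,1^{N-2}]}/\langle\theta_1,\theta_2\rangle.
\]
Finally I would translate the combinatorics into the Lie-theoretic label: for $h_{sp}$ the relevant part has $m(s)$ even and $s\not\equiv\epsilon$, so $\gg^s$ is orthogonal, and with $\mu^{(s)}$ accounting for the block sizes one checks that $N+2 = 2(n+1)$, whence $\overline{\0}_{[2^2,1^{N-2}]}$ is precisely the minimal nilpotent orbit closure $d_{n+1}$ in $\so_{2n+2}$ and the singularity is $d_{n+1}/V_4$.

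The main obstacle I expect is the bookkeeping in the middle step: verifying that the involution $\theta$ descending from $\so_{N+1}$ matches $\theta_2$ on $\so_{N+2}$ under the embedding, rather than, say, coinciding with $\theta_1$ or with some inner automorphism. This requires pinning down exactly how the extra coordinate added in passing from $\so_{N+1}$ to $\so_{N+2}$ interacts with the rank-one cone $\Xi$ and the vector $u$, and confirming the two sign-flip diagonal matrices act on genuinely independent coordinates so that together they generate $V_4$ rather than a cyclic group of order four (contrast the $A_{2n}^+$ phenomenon noted in the introduction). Once the two involutions are seen to commute and act with distinct fixed loci, commutativity of quotients by a finite abelian group makes the collapse to a single $V_4$-quotient routine, and the dimension and unibranch facts from Proposition \ref{prop:dense_shared_orbit} guarantee the chain of isomorphisms is honest rather than merely birational.
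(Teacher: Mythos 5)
Your proposal is correct and follows essentially the same route as the paper: the paper likewise chains the proposition ${\mathcal S}_{\0_{\lambda},e} \simeq \overline{\0}_{[3,1^{N-2}]}/\langle \theta \rangle$ with the isomorphism $\overline{\0}_{[3,1^{N-2}]} \simeq \overline{\0}_{[2^2,1^{N-2}]}/\langle \theta_1 \rangle$ from \cite[Corollary 2.5]{Fu-Juteau-Levy-Sommers:GeomSP}, identifying $\theta$ with the involution induced by $\theta_2$ on the fixed subalgebra of $\theta_1$, and then specializes to $N=2n$ to read off $d_{n+1}/V_4$. The involution-matching you flag as the main obstacle is exactly the step the paper leaves implicit in its one-line proof, and it works out just as you describe.
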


\begin{proof}
The $h_{sp}$ degeneration is covered by the case when $N$ is even with $n=N/2$.
\end{proof}

\begin{remark}
	a) The special case $n=3$ was already observed in \cite{Fu:wreath}.
	In that case we have $\overline{\0_{\rm{ min}}(\mathfrak{so}_5)}\cong {\mathbb C}^4/\{\pm 1\}$ and so we obtain isomorphisms of $\SOe\cap\overline{{\mathcal O}'}$ with (i) ${\mathbb C}^4/W(B_2)$; (ii) ${\mathcal N}(\mathfrak{so}_4)/\mathfrak{S}_2=({\mathcal N}(\mathfrak{sl}_2)\times{\mathcal N}(\mathfrak{sl}_2))/\langle\theta\rangle$ where $\theta$ swaps the two copies of $\mathfrak{sl}_2$.
	
	b) The orbits which intersect non-trivially with 
	$\SOe$ are the nilpotent orbits lying between $\0$ and $\0'$ in the partial order.
   If $N\geq 4$ then there are five of these, as indicated by the following diagram:
$$
	\small{\xymatrix@=.2cm{
			 &&&  \0_{\rm{ min}}(\mathfrak{so}_{N+2})\ar@{-}[dl] \ar@{-}[dr] & \\
			&& \0_{\rm{ min}}(\mathfrak{so}_{N+1}^{(1)})\ar@{-}[dr] & & \0_{\rm{ min}}(\mathfrak{so}_{N+1}^{(2)})\ar@{-}[dl] \\
			&&& \0_{\rm{ min}}(\mathfrak{so}_N)\ar@{-}[d] & \\
			 &&& \{ 0\} & }}
	$$
	where $\mathfrak{so}_{N+1}^{(1)}$ and $\mathfrak{so}_{N+1}^{(2)}$ are the two fixed point subalgebras for the two proper parabolic subgroups of $\mathfrak{S}_2\mathfrak{S}_2$.
	For $N=3$ there is no orbit with partition $[3^2,2^{N-4},1^2]$, equivalently, $\mathfrak{so}_3$ contains no elements of $\0_{\rm{ min}}(\mathfrak{so}_5)$.

	a)  For $N$ odd, the singularity $\overline{{\mathcal O}}_{[2^2,1^{N-2}]}/\langle \theta_1, \theta_2 \rangle$ arises
as a slice, but never for a minimal special degeneration.  This is because the $f_{sp}$ singularities arise in this case as the minimal special degenerations.
\end{remark}

\subsection{Minimal special degenerations in the exceptional Lie algebras} \label{Section:min_special_deg}

There are three unexpected singularities that arise in the exceptional Lie algebra: (i) $\mu$ (with normalization $A_3$); (ii) $a_2/\mathfrak{S}_2$; (iii) $d_4/\mathfrak{S}_4$, which are dealt with in \cite{FJLS}, \cite{Fu-Juteau-Levy-Sommers:GeomSP}.  They appear once, once, and twice, respectively.
We will show in this subsection that all remaining singularities associated to minimal special degenerations in exceptional types are unions of simple surface singularities or minimal special singularities.

The case of $G_2$ is clear. 
Most of the minimal special degenerations are minimal degenerations and hence were dealt with in \cite{FJLS} 
or \cite{Fu-Juteau-Levy-Sommers:GeomSP}.
There are three (resp. three, eight, ten) minimal special degenerations which are not minimal degenerations in type $F_4$ (resp. $E_6$, $E_7$, $E_8$).
These cases, with two exceptions, are covered by the following proposition.

\begin{proposition}\label{nonminlem}
Let $\0'$ be a special nilpotent orbit in an exceptional Lie algebra such
that the reductive centralizer $\cg(\sg)$ contains a non-simply-laced simple component $\cg_0={\rm Lie}(C_0)$.

(a) There is a unique special orbit $\0>\0'$ such that $\codim_{\overline\0}\0'$ is equal to the dimension of the minimal special nilpotent $C_0$-orbit $\0_0$ in $\cg_0$.

(b) If $\0'=\0_{2A_2}$ in type $E_8$ then there are two such simple components $\cg_0$, both of type $G_2$, and $\SOe$ is a union of two copies of $\overline{\0_0}$.  The two copies are interchanged by $C(\sg)$.
Other than this case, there is exactly one such $\cg_0$ and $\SOe \simeq \overline{\0_0}$.
\end{proposition}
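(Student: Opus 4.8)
The plan is to realize the orbit $\0$ of part (a) explicitly and to push the minimal special $C_0$-orbit into the slice, following the template of Propositions \ref{codimension_classical} and \ref{f_cases}. Fix $e\in\0'$ with $\mathfrak{sl}_2$-triple $\sg$, let $\0_0\subset\cg_0$ be the minimal special orbit of the non-simply-laced factor, and choose $e_0\in\0_0$. Because $C(\sg)$ fixes $e$, $h$ and $f$, the set $e+\overline{C(\sg)\cdot e_0}$ is a closed, $C(\sg)$-stable subvariety of the slice $\mathcal S_e=e+\g^f$ contained in $\overline\0$, where $\0:=G\cdot(e+e_0)$. Since $e_0\neq 0$ and the slice meets $\0'$ only in $e$, we have $\0>\0'$. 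When $\cg_0$ is the unique non-simply-laced factor it is preserved by $C(\sg)$, which acts on it by Lie algebra automorphisms; as the minimal special orbit of a simple Lie algebra is characteristic, $C(\sg)\cdot e_0=\0_0$, so $\overline{\0_0}$ embeds as a closed subvariety $e+\overline{\0_0}\subseteq \mathcal S_{\0,e}$ and $\dim \mathcal S_{\0,e}=\codim_{\overline\0}\0'\geq\dim\0_0$.

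For part (a) the work is to upgrade this to an equality and to pin down $\0$ as special and unique. I would compute $\dim\0$ from the orbit of $e+e_0$, read off via its weighted Dynkin diagram (the exceptional analogue of Lemma \ref{lemma:partition_adding}, using the $\sg\oplus\sg_0$ decomposition and the associated cocharacter). The equality $\dim \mathcal S_{\0,e}=\dim\0_0$ is then equivalent to the absence of exceptional pairs beyond those accounted for by $\cg_0$, in the sense of \cite[\S 4]{FJLS}, exactly as in the two cited propositions. Specialness of $\0$, the codimension coincidence, and uniqueness of a special orbit above $\0'$ of codimension exactly $\dim\0_0$ are then a finite verification over the explicit list of special orbits $\0'$ in $F_4$, $E_6$, $E_7$, $E_8$ whose reductive centralizer carries a non-simply-laced factor, using the standard tables of reductive centralizers, the closure order, and the Springer/Lusztig criterion for specialness.

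Granting (a), part (b) is a dimension-and-branches argument. In the single-factor case the closed subvariety $e+\overline{\0_0}\subseteq \mathcal S_{\0,e}$ has, by (a), the full dimension of the slice, so it is a union of components; being irreducible it is a single component, and it is the whole slice exactly when $\overline\0$ is unibranch at $e$. I would deduce unibranch-ness from normality of $\overline\0$ at $e$, arguing as in \cite[Thm 16.2]{Kraft-Procesi:classical} and Proposition \ref{codimension_classical} (the minimal degenerations from $\0$ into this part of the slice being of types that preserve unibranch-ness), or by invoking known normality of the relevant exceptional orbit closures; this gives $\mathcal S_{\0,e}\simeq\overline{\0_0}$. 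For $\0'=\0_{2A_2}$ in $E_8$ the reductive centralizer has semisimple part $G_2\times G_2$ and the nontrivial element of $A(e)$ swaps the two factors, so $e_0$ in either factor yields the same $\0$, and $C(\sg)\cdot e_0$ meets both factors; the two closures give components $e+\overline{\0_0^{(1)}}$ and $e+\overline{\0_0^{(2)}}$ of $\mathcal S_{\0,e}$ interchanged by $C(\sg)$. Since $\overline\0$ has exactly two branches at $e$ here, the slice is precisely their union.

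The main obstacle is part (a): there is no fully uniform reason that $G\cdot(e+e_0)$ should be special, of codimension exactly $\dim\0_0$, and unique, so the crux is the case-by-case dimension and specialness bookkeeping---above all, ruling out extra exceptional pairs in the sense of \cite[\S 4]{FJLS} that would make the slice strictly larger than $\overline{\0_0}$. A secondary point needing care is checking in each case that the remaining simply-laced and toral factors of $\cg(\sg)$ do not enlarge $C(\sg)\cdot e_0$ beyond $\0_0$, and, in the $E_8$ exception, that the branch count at $e$ is exactly two.
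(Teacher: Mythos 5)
Your proposal follows essentially the same route as the paper: embed $e+\overline{C_0\cdot e_0}$ in the slice, prove $e+e_0\in\0$ by checking the criterion of \cite[Prop.~4.8]{FJLS} (no exceptional pairs, i.e.\ every irreducible $\sg_0$-summand of $\g^e(i)$ has dimension at most $i+1$), conclude $\SOe\cong\overline{\0_0}$ by unibranchness and equality of dimensions, reduce part (a) to a finite check against the tables in \cite{Carter}, and treat $\0_{2A_2}$ in $E_8$ via the two $G_2$-factors swapped by $C(\sg)$.

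There is, however, a genuine gap: the verification you single out as the crux does not in fact succeed in every case, and your proposal has no fallback for the case where it fails. For $\0'=\0_{A_2+3A_1}$ in $E_7$ (with $\0=\0_{D_4(a_1)}$), the factor $\cg_0$ is of type $G_2$ and one has $\g^f(2)=V(2\varpi_1)\oplus\C e$, $\g^f(4)=V(\varpi_1)$. The element $e_0$ of the minimal special (= subregular) orbit of $\cg_0$ acts on $V(\varpi_1)\cong\C^7$ with Jordan type $[3,2^2]$, hence on $V(2\varpi_1)\cong\Sigma_7$ with a Jordan block of size $5$ (coming from $\mathrm{Sym}^2 V(2)\supset V(4)$); since $5>i+1=3$ in degree $i=2$, the criterion of \cite[Prop.~4.8]{FJLS} fails, there is an exceptional pair, and indeed $e+e_0\notin\0_{D_4(a_1)}$. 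Consequently your orbit $G\cdot(e+e_0)$ is not the special orbit of part (a), and the density argument for $e+\overline{C_0\cdot e_0}$ in the slice breaks down, so both halves of your argument stall at this case. The paper resolves it by a separate construction: the matrix-square operation yields a $C_0$-equivariant quadratic map $\psi:\cg_0\to V(2\varpi_1)\subset\g^e(2)$, an explicit (GAP) computation shows that $e+x+\psi(x)\in\0_{D_4(a_1)}$ for $x$ subregular in $\cg_0$, and then $\SOe=e+\overline{C_0\cdot(x+\psi(x))}$, which is isomorphic to $\overline{\0_0}$ by projecting away the $\psi$-component. Any complete proof must include this correction term (or an equivalent device). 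A secondary caution: you propose deducing unibranchness from normality of $\overline{\0}$ at $e$, but normality is not known, and in some instances fails, for exceptional orbit closures; unibranchness should instead be imported from the minimal-degeneration analysis of \cite{FJLS}, which is what the paper implicitly relies on.
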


\begin{proof}
Statement (a) is a straightforward check using the tables of nilpotent orbits and Hasse diagrams in \cite{Carter}.

The singularities in (b) can be classified using the arguments in \cite[\S 4.3]{FJLS}.
Indeed, several of these are discussed there, see \cite[\S 11, Table 13]{FJLS}.
Let $e_0 \in  \0_0$.
We claim that, with the sole exception of $\0'=\0_{A_2+3A_1}$ in type $E_7$, $e+e_0 \in \0$.
By unibranchness and dimensions, it follows that 
$\SOe=\overline{f+C_0\cdot e_0}\cong\overline{\0_0}$.
By \cite[Prop. 4.8]{FJLS}, it suffices to verify the following condition: let $\langle h_0,e_0,f_0\rangle=\sg_0\subset\cg_0$ be an $\mathfrak{sl}_2$-subalgebra, then all irreducible $\sg_0$-summands in $\g^e(i)$ have dimension $\leq (i+1)$.
This can be checked by inspecting the tables in \cite{Lawther-Testerman}.
If $\cg_0$ is of type $B$, then all non-trivial simple summands for the action on the centralizer of $e$ are natural modules or spin modules; a short root element acts with Jordan blocks of size 2 on the spin module and of size $\leq 3$ on the natural module, so we only need to check that no natural modules occur in $\g^f(1)$.
When $\cg_0$ is of type $G_2$ (excluding $\0'=\0_{A_2+3A_1}$ in type $E_7$), all non-trivial summands are isomorphic to the minimal faithful representation for $\cg_0$; $e_0$ acts on the minimal faithful representation with Jordan blocks of size $\leq 3$, so we only need to check that the minimal representation doesn't appear in $\g^e(1)$.
Finally, $\cg_0$ of type $C$ occurs once, when $\0'=\0_{D_4}$ in type $E_7$ and $\cg_0=\mathfrak{sp}_6$; here one has to check that $e_0$ has no Jordan blocks of size $>7$ on $V(\varpi_2)$, hence on the alternating square of the natural module, which is straightforward.

This only leaves the case $\0'=\0_{A_2+3A_1}$ in $E_7$.
Here $\cg_0=\g^e\cap\g^h$ is simple of type $G_2$ and the positive graded parts of $\g^f$ are: $$\g^f(2)=V(2\varpi_1)\oplus\C e,\quad \g^f(4)=V_{\mini},$$
where $V_{\rm min}=V(\varpi_1)$ is the minimal faithful representation for $\cg_0$.
Note that the action of $\cg_0$ on $V_{\mini}$ induces an embedding in $\mathfrak{so}_7$, and $\mathfrak{sl}_7$ decomposes over $\cg_0\subset\mathfrak{so}_7$ as $\mathfrak{so}_7\oplus V(2\varpi_1)$.
(In the notation of \S \ref{f_cases}, $V(2\varpi_1)=\Sigma_7$.)
Furthermore, the matrix square operation on $\mathfrak{gl}_7$ determines a quadratic map $\mathfrak{so}_7\rightarrow V(2\varpi_1)$ which restricts to $\cg_0$ to give a $C_0$-equivariant map $\psi:\cg_0\rightarrow V(2\varpi_1)\subset\g^e(2)$.
In particular, if $x=e_{\beta_2}+e_{3\beta_1+\beta_2}$ then $\psi(x)$ is non-zero of weight $3\beta_1+2\beta_2$.
We checked using GAP that (with this notation) there exists an element of $\0=\0_{D_4(a_1)}$ of the form:
$e+x+\psi(x)$, where $x$ is in the subregular nilpotent orbit in $\cg_0$.
It follows that $\SOe=e+\overline{C_0\cdot (x+\psi(x))}$, hence is isomorphic to the closure of the $C_0$-orbit through $x$, which completes our proof.
\end{proof}

\begin{proposition}
All minimal special degenerations in exceptional Lie algebras are either: (1) minimal degenerations, (2) covered by the above proposition, or (3) isomorphic to $d_4/\mathfrak{S}_4$, which occurs for the two cases of $\0_{F_4(a_3)}>\0_{A_2}$ in $F_4$ and $\0_{E_8(a_7)}>\0_{D_4+A_2}$ in $E_8$.
\end{proposition}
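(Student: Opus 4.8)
The plan is to reduce the statement to a finite case-check across $F_4$, $E_6$, $E_7$, and $E_8$. Using the tables of nilpotent orbits and their closure (Hasse) diagrams in \cite{Carter}, I would first list every pair $\0' < \0$ that is adjacent in the order on $\nilcone^{sp}_o$ but not in the full order on $\nilcone_o$; these are exactly the minimal special degenerations that are not minimal degenerations --- three in $F_4$, three in $E_6$, eight in $E_7$, and ten in $E_8$. Since a two-dimensional slice is automatically a minimal degeneration, each such pair has $\dim \SOe \geq 4$. The key structural observation to record at the outset is that if $\SOe$ were isomorphic to the closure of a \emph{minimal} nilpotent orbit of some reductive Lie algebra, then --- because such a closure contains no orbit strictly between $0$ and the minimal orbit --- the degeneration $(\0',\0)$ would already be minimal; hence for a genuinely non-minimal pair the slice must be strictly larger than any minimal orbit closure.

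For each pair I would fix $e \in \0'$ and read off the reductive centralizer $\cg(\sg)$ from \cite{Carter}. The bulk of the list is dispatched by Proposition \ref{nonminlem}: whenever $\cg(\sg)$ contains a non-simply-laced simple factor $\cg_0$, that proposition produces a special orbit above $\0'$ of codimension $\dim\0_0$, with $\0_0$ the minimal special $C_0$-orbit, and identifies $\SOe$ with $\overline{\0_0}$ (or with two copies of it, in the single case $\0' = \0_{2A_2}$ in $E_8$). Since $\cg_0$ is non-simply-laced, $\0_0$ is not the minimal orbit of $\cg_0$, so $\overline{\0_0}$ is a genuine minimal special singularity. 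The one verification needed here is that $\codim_{\overline\0}\0' = \dim\0_0$, which guarantees that the orbit produced by Proposition \ref{nonminlem}(a) is the adjacent special orbit $\0$ rather than something larger; this is a routine dimension count from the tables, and it accounts for every entry but two (including the delicate $\0' = \0_{A_2+3A_1}$ subcase in $E_7$ handled inside that proposition).

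It then remains to isolate the pairs for which $\cg(\sg)$ has no non-simply-laced simple factor, so that Proposition \ref{nonminlem} does not apply, and to check that these are precisely $\0_{F_4(a_3)} > \0_{A_2}$ in $F_4$ and $\0_{E_8(a_7)} > \0_{D_4+A_2}$ in $E_8$. For these two I would invoke the explicit model of \cite{Fu-Juteau-Levy-Sommers:GeomSP}: one verifies that $\SOe$ is the quotient of the closure of the minimal nilpotent orbit of $\mathfrak{so}_8$ by the $\SG_4$-action induced from $C(\sg)$, and that no special orbit lies strictly between $\0'$ and $\0$, so that the pair is indeed a minimal special degeneration. Combining the three paragraphs yields the trichotomy.

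The main obstacle is the two $d_4/\SG_4$ cases: here the singularity cannot be read off from a simple factor of $\cg(\sg)$, and establishing $\SOe \simeq d_4/\SG_4$ genuinely requires the $\mathfrak{so}_8$-model together with the identification of the $\SG_4$-action coming from the component group, as carried out in \cite{Fu-Juteau-Levy-Sommers:GeomSP}. The remaining difficulty is purely organizational: one must be sure the enumeration from \cite{Carter} is complete and that in every non-simply-laced case the codimension genuinely matches $\dim\0_0$, since a mismatch would either signal a missed intermediate special orbit or point to a slice of a different type.
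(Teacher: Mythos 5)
Your overall architecture matches the paper's: enumerate from Carter's tables the minimal special degenerations that are not minimal degenerations (three, three, eight, ten in $F_4$, $E_6$, $E_7$, $E_8$), dispatch all but two of them via Proposition \ref{nonminlem} after checking that the codimension of $\0'$ in $\overline\0$ equals $\dim \0_0$, and treat $\0_{F_4(a_3)}>\0_{A_2}$ in $F_4$ and $\0_{E_8(a_7)}>\0_{D_4+A_2}$ in $E_8$ separately. For the $F_4$ case your citation of \cite[Theorem 4.11]{Fu-Juteau-Levy-Sommers:GeomSP} is exactly what the paper does.

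The genuine gap is your treatment of the $E_8$ case. You propose to handle both $d_4/\SG_4$ cases by ``invoking the explicit model'' of \cite{Fu-Juteau-Levy-Sommers:GeomSP}, but that reference establishes $\SOe\simeq d_4/\SG_4$ only for the degeneration $\0_{F_4(a_3)}>\0_{A_2}$ inside $F_4$; it contains no model of, and no computation for, the slice from $\0_{D_4+A_2}$ to $\0_{E_8(a_7)}$ in $E_8$, so there is nothing there to invoke, and you supply no method for carrying out such a verification directly in $E_8$. The paper closes precisely this gap with a new reduction argument: it passes to the larger orbit $\0_{D_4}$ in $E_8$, whose reductive centralizer $\cg_0$ is simple of type $F_4$; it verifies the condition of \cite[Prop. 4.8]{FJLS} to get $\mathcal{S}_{D_4}\cap\overline{\0_{E_8(a_7)}}=f+\overline{C_0\cdot e_2}$ with $e_2$ in the $F_4(a_3)$ orbit of $\cg_0$, and $\mathcal{S}_{D_4}\cap\overline{\0_{D_4+A_2}}=f+\overline{C_0\cdot e_1}$ with $e_1$ in the $A_2$ orbit of $\cg_0$; and it then uses a nesting fact (if $\dim C_0\cdot e_1$ equals the codimension of $\0'$ in $\overline{G\cdot(f+f_1)}$, then $\g^{e+e_1}=\g^e\cap\g^{e_1}$, so the Slodowy slice at $f+f_1$ is contained in the slice at $f$) to identify $\mathcal{S}_{D_4+A_2}\cap\overline{\0_{E_8(a_7)}}$ with the Slodowy slice singularity in $F_4$ from $\0_{A_2}$ to $\0_{F_4(a_3)}$, hence with $d_4/\SG_4$ by the $F_4$ result. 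Without this reduction, or some genuinely new direct computation in $E_8$, your proposal leaves the $E_8$ case of alternative (3) unproved.
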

\begin{proof}
We checked that all the minimal special degenerations, which are not minimal degenerations, are covered by
the proposition, except for the two cases listed.
The exceptional special degeneration $\0_{F_4(a_3)}>\0_{A_2}$ in type $F_4$ was dealt with in 
\cite[Theorem 4.11]{Fu-Juteau-Levy-Sommers:GeomSP}.
Hence it remains to show that the Slodowy slice singularity from $D_4+A_2$ to $E_8(a_7)$ in $E_8$ is also isomorphic to $d_4/\mathfrak{S}_4$.
To do this, we first consider $\0'=\0_{D_4}$.
Let ${\mathcal S}_{D_4}$ be the Slodowy slice at $f\in\0'$.
Repeating the calculation in the proof of Proposition \ref{nonminlem}, we see that the condition of \cite[Prop. 4.8]{FJLS} holds for an element of $\cg$ of type $F_4(a_3)$.
It follows that ${\mathcal S}_{D_4}\cap \overline{\0_{E_8(a_7)}}=f+\overline{C_0\cdot e_2}$ where $e_2$ belongs to the $F_4(a_3)$ orbit in $\cg_0$.
By the same calculation (or by direct observation), ${\mathcal S}_{D_4}\cap\overline{\0_{D_4+A_2}}=f+\overline{C_0\cdot e_1}$ where $e_1$ is in the $A_2$ orbit in $\cg_0$.
Now we use the following fact (which follows from equality of dimensions): if $\{ e_1,h_1,f_1\}$ is an $\mathfrak{sl}_2$-triple in $\cg_0$ such that $\dim C_0\cdot e_1$ equals the codimension of $\0'$ in $\overline{G\cdot (f+f_1)}$, then the centralizer of $e+e_1$ equals $\g^e\cap\g^{e_1}$.
Hence the Slodowy slice at $f+f_1$ is contained in the Slodowy slice at $f$.
It follows that ${\mathcal S}_{D_4+A_2}\cap\overline{\0_{E_8(a_7)}}$ is isomorphic to the Slodowy slice singularity in $F_4$ from $\0_{A_2}$ to $\0_{F_4(a_3)}$, hence is isomorphic to $d_4/\mathfrak{S}_4$. 
\end{proof}

The following is true in both the classical and exceptional types.
\begin{corollary}\label{Corollary:simple_factors_control_singularity}
	Let $\0'=\0'_e$ be special. 	
	The action of $C(\sg)$ on $\cg(\sg)$ induces an action of $A(e)$ on the set of simple components of $\cg(\sg)$.  
	Each $A(e)$-orbit of simple components $\cg_0$ corresponds to a unique special nilpotent orbit $\0$ in $\g$ such that $(\0,\0')$ is a minimal special degeneration. Moreover, $\SOe$ contains a subvariety isomorphic to the minimal special nilpotent orbit closure in $\cg_0$. All minimal special degenerations of codimension at least $4$ arise in this way
\end{corollary}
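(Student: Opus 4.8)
The plan is to assemble this corollary directly from the structural results already established for both the classical and exceptional cases, treating it as a synthesis rather than a fresh computation. The statement has three assertions: that $A(e)$-orbits of simple components of $\cg(\sg)$ correspond bijectively to minimal special degenerations $(\0,\0')$; that $\SOe$ contains a copy of the minimal special orbit closure in $\cg_0$; and that every codimension-$\geq 4$ minimal special degeneration arises this way. I would organize the proof around these three claims in turn.

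First I would establish the correspondence between simple components and minimal special degenerations. The key input is the combinatorial classification (the theorem of \S\ref{combinatorial_classification}) together with Propositions \ref{codimension_classical}, \ref{f_cases} and Corollary \ref{Cor:h_sp_sing} in the classical case, and Proposition \ref{nonminlem} in the exceptional case. In every one of these, the slice $\SOe$ was identified (up to the quotient subtleties) with the closure of a minimal special orbit $\overline{\0_0}$ in a simple factor $\cg_0$ of $\cg(\sg)$, and the codimension of $\0'$ in $\overline\0$ equals $\dim\0_0$. So I would argue: given a simple factor $\cg_0$ that is \emph{not} of the ``collapsing'' type (i.e. one whose minimal special orbit genuinely produces a degeneration), the element $e+e_0$ with $e_0\in\0_0$ lands in a well-defined special orbit $\0$, and the classification shows this $\0$ is adjacent to $\0'$ among special orbits. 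Conversely every codimension-$\geq 4$ minimal special degeneration was shown in those sections to be of exactly one of the tabulated types, each attached to such a factor. The $A(e)$-action enters because $C(\sg)$ permutes the simple factors of $\cg(\sg)$, and factors in the same $A(e)$-orbit give the same orbit $\0$ (their copies of $\overline{\0_0}$ inside $\SOe$ are interchanged by $C(\sg)$); this is precisely the phenomenon made explicit in Proposition \ref{nonminlem}(b) for $\0_{2A_2}$ in $E_8$ and in the $g_{sp},f_{sp},h_{sp}$ analyses. Hence the correspondence is between \emph{$A(e)$-orbits} of components and degenerations.

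For the second claim, the containment $\overline{\0_0}\hookrightarrow\SOe$ is immediate once $\0$ is defined via $e+e_0$: the subvariety $f+\overline{C(\sg)\cdot e_0}=f+e+\overline{C(\sg)\cdot e_0}$ sits inside $\SOe$, exactly as in the proof of Proposition \ref{codimension_classical}, and it is isomorphic to $\overline{\0_0}$ (for a single factor) via the Clebsch--Gordan decomposition of Lemma \ref{lemma:partition_adding}. For the third claim -- that \emph{all} codimension-$\geq 4$ minimal special degenerations arise this way -- I would simply invoke the classification theorem of \S\ref{combinatorial_classification} in the classical types and the two exceptional propositions above, which together exhaust the list: each entry of Table \ref{codim4ormore} and each exceptional case was matched to a factor $\cg_0$ and its minimal special orbit (including the quotient cases $d_{n+1}/V_4$, $a_2/\SG_2$, $d_4/\SG_4$, where the ``minimal special nilpotent orbit closure in $\cg_0$'' is interpreted after the relevant finite quotient).

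The main obstacle I anticipate is the bookkeeping around the quotient cases and the distinction between a factor \emph{having} a minimal special orbit and that orbit actually \emph{producing} a minimal special degeneration. Not every simple factor contributes: for a factor of classical type the minimal special orbit closure is only genuinely $4$-dimensional-or-more (and hence relevant) under parity conditions matching the tables, and the $h_{sp}$/quotient cases show that what appears inside $\SOe$ is a quotient $\overline{\0_0}/\Gamma$ rather than $\overline{\0_0}$ itself. So the delicate point is to phrase ``corresponds to a unique special orbit $\0$'' so that collapsing factors (those whose minimal special orbit has dimension $2$, giving a codimension-$2$ rather than codimension-$\geq 4$ degeneration) are correctly excluded, and so that the $A(e)$-action is shown to act \emph{compatibly} on both sides of the correspondence. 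I would handle this by restricting attention from the outset to factors whose minimal special orbit has dimension $\geq 4$, and by citing the unibranchness/normality facts already used in Propositions \ref{codimension_classical} and \ref{f_cases} to guarantee that the inclusion $\overline{\0_0}\subseteq\SOe$ is an equality of the relevant irreducible component, so the correspondence is genuinely bijective and not merely a containment.
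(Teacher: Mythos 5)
Your proposal is correct and takes essentially the same route as the paper, whose proof is likewise a brief synthesis: it cites the exceptional-type propositions just established (including Proposition \ref{nonminlem}), the simply-laced/minimal-degeneration and $a_2/\SG_2$, $d_4/\SG_4$ cases from \cite{FJLS}, and the classical-type computations of \S\ref{codim4_degs}, exactly as you do. The one detail the paper states that you replace with the $E_8$ example of $\0_{2A_2}$ is that in the classical types the $A(e)$-orbits on simple factors of $\cg(\sg)$ are singletons except when $\cg(\sg)$ contains a copy of $\so_4$ (the $[2A_1]^+=d_2^+$ case); otherwise your outline and the paper's argument coincide.
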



\begin{proof}
We just showed this in the exceptional types when $\cg_0$ is not simply-laced, but it also holds when $\cg_0$ is simply-laced where it gives a minimal degeneration.  It also holds in the cases of $d_4/\SG_4$ and $a_2/\SG_2$ 
from \cite{FJLS}.   In the classical types, we showed that each simple factor of $\cg(\sg)$ leads to a unique minimal special degeneration.  The $A(e)$-orbits on the simple factors 
of $\cg(\sg)$ are singletons except for the case where $\cg(\sg)$ contains a copy of $\so_4$.  This corresponds to the case of $[2A_1]^+ = d_2^+$.
\end{proof}


\section{$A(e)$-action on slices}\label{section:A_action}

In this section we compute the action of $A(e)$ on the slice $\SOe$ for both minimal degenerations and minimal special degenerations in the classical types, and determine when the action is outer. 
This was done in the exceptional groups in \cite{FJLS} for minimal degenerations.
There is  only a single case of a minimal special degeneration not covered by those results:  the case of $e \in A_2$ from Proposition \ref{nonminlem}, which we now denote as $[2g_2^{sp}]^+$.

\subsection{Union of simple surface singularities}\label{subsect:defn_A_action}
Recall that $C(\sg)$ acts on $\SOe$.   In the case of a simple surface singularity, as discussed in the introduction, we use Slodowy's notion of action, which amounts to the action on the projective lines in the exceptional fiber.
Even when $\SOe$ is not irreducible, we want to describe how 
$C(\sg)$ permutes the projective lines in the fiber, something we did in the exceptional groups.   Since $C^\circ(\sg)$ acts trivially, we get a permutation action of $A(e) \simeq C(\sg)/C^\circ(\sg)$ on the $\mathbb P^1$'s.  We call this the outer action of $A(e)$ on the slice.

To compute the action for $\dim(\SOe)=2$, 
we use \cite[Lemma 5.8]{FJLS}.   
We do not assume that the orbits are special, so the set-up is a minimal degeneration 
$(\OC_{\lambda}, \OC_{\mu})$ in the classical groups where $\dim(\SOe)=2$ for $e \in \0_\mu$, 
and where $\lambda, \mu$ are the appropriate partitions indexing the nilpotent orbits.  
Let $\nilrad_P$ denote the nilradical of the Lie algebra of  a parabolic subgroup $P$ of $G$ such that $\0_\lambda$ is Richardson for $\nilrad_P$.   Then we have the proper, surjective map 
$\pi:  G \times^P \ \nilrad_P \to \overline{\OC}_{\lambda}$, which is generically finite.   Below, we will always choose $\nilrad_P$ so that $\pi$ is birational. 

Next, assume that the reductive centralizer for an element in $\0_\lambda$ is semisimple.  
Let $\0_1, \0_2, ...\0_t$ be the maximal orbits in the complement of  $\0_\lambda$ in its closure.  
Assume that all $\0_i$ are codimension two in $\bar{\OC}_{\lambda}$.
Let $e_i \in \0_i$.  Let $r_i$ equal the number of $A(e_i)$-orbits on $\pi^{-1}(e_i)$.  Then as in \cite[Lemma 5.8]{FJLS}, if $G$ is connected, we have $\sum_i r_i$ is equal to rank of $\g$ minus the rank of the Levi subgroup of $P$.   The quantities $r_i$ will be enough to determine the outer action. 

Remarkably, in types $B$ and $C$,  the actions are large as possible 
as they were in the exceptional types (at least given the size of $A(e)$).

\begin{proposition} \label{prop:A_action_codim2}
In the classical groups $B,C,D$ (working in the full orthogonal group for $D$), 
\begin{enumerate}
\item If $\SOe$ is a simple surface singularity of type $D_{k+1}$
or  $A_{2k-1}$, then the $A(e)$-action upgrades these singularities to $C_k$ and $B_k$, respectively.
\item If $\SOe$ is a union of two branches of type $A_{2k-1}$, the $A(e)$-action 
is $[2B_k]^+$ as described in \S \ref{subsect:min_sp_intro}.
\end{enumerate}
\end{proposition}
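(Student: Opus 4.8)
The plan is to read off the $A(e)$-action from the combinatorics of the exceptional fibre, using \cite[Lemma 5.8]{FJLS} exactly as set up above. The underlying (unlabelled) surface singularity of every codimension-two minimal degeneration in $B$, $C$, $D$ is already known from \cite{Kraft-Procesi:classical}: a single $A_{2k-1}$ (types $c$, $d$), a single $D_{k+1}$ (type $b$), or two disjoint copies of $A_{2k-1}$ (type $e$). The minimal resolution of such a singularity has exceptional fibre a chain (or two chains) of $\mathbb{P}^1$'s whose number equals the number $N$ of nodes of the associated Dynkin diagram, and $C(\sg)$ permutes these lines. Since $C(\sg)$ is a product of orthogonal and symplectic groups, $A(e)$ is an elementary abelian $2$-group, so its image in the automorphism group of each diagram is generated by the folding involutions; hence the number $r$ of $A(e)$-orbits on the lines is either $N$ (trivial action) or the folded node-count, with no intermediate value possible. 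For $A_{2k-1}$ the folded count is $k$ (giving $B_k$) and for $D_{k+1}$ it is $k$ (giving $C_k$), so it suffices to compute $r$ and check it is the smaller value.

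First I would reduce to the irreducible local models of Table \ref{codim2} by Proposition \ref{slice_row_equiv}, and verify that the identification of slices there is compatible with the $C(\sg)$-actions: the simple factors of $C(\sg)$ attached to the cancelled rows centralise the local $\sl_2$ and act trivially on the two-dimensional slice, so the action factors through the component group of the factor attached to the critical part. The parity conditions on the numbers of cancelled rows and columns in Table \ref{codim2} (via the rule $\tilde\epsilon\equiv\epsilon+s$ of \S\ref{subsect:canceling_rows_cols}, together with the determinant constraints among the orthogonal factors) record precisely when this factor is disconnected, i.e.\ when the folding $\mathfrak{S}_2$ is actually present. In the irreducible models the reductive centraliser of $\0_\lambda$ is semisimple --- it is $\SL_2=\Sp(2)$ in every case --- so the hypotheses of \cite[Lemma 5.8]{FJLS} hold once a parabolic $P$ with $\0_\lambda$ Richardson and $\pi$ birational is fixed.

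For the single-branch cases I would then evaluate $\sum_i r_i=\mathrm{rank}\,\g-\mathrm{rank}(\text{Levi of }P)$. For types $b$ and $c$ the model is the regular/subregular degeneration in $\sp_{2k}$ and $\so_{2k+1}$, so there is a unique codimension-two boundary orbit and $r$ equals the rank of $\g$; this recovers Slodowy's result \cite[IV.8]{Slodowy:book} that the subregular slice is $D_{k+1}$ folded to $C_k$, resp.\ $A_{2k-1}$ folded to $B_k$. For type $d$ I would enumerate the codimension-two boundary orbits of $\0_{[2n+1,2n+1]}$ in $\sp_{4n+2}$ from the Kraft--Procesi tables and show that the resulting right-hand side equals the sum of the folded counts. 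Since each $r_i$ is at least its folded count, equality of the totals forces every action to be the full folding, and in particular our slice is $A_{2k-1}$ folded to $B_k$.

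The two-branch case, type $e$, is where the main work lies. Here $\SOe$ is a union of two $A_{2n-1}$'s and $A(e)\cong V_4$, coming from the two factors $\Or(2)\times\Or(2)$ of $C(\sg)$ attached to the parts $2n{-}1$ and $1$ of $\mu$. I would first invoke the transitivity of $C(\sg)$ on the irreducible components (established in \cite{FJLS}) to produce an element of $A(e)$ exchanging the two branches, and then apply \cite[Lemma 5.8]{FJLS} with $\0_\lambda=\0_{[2n,2n]}$ (whose reductive centraliser $\Sp(2)$ is again semisimple) to compute the total orbit count; the claim is $r=n$. A short enumeration shows that among the $V_4$-subgroups of the symmetry group of the two diagrams that contain the branch-exchange, only $\langle\,\text{exchange},\ \text{simultaneous fold}\,\rangle$ has exactly $n$ orbits on the $2(2n-1)$ lines --- the exchange alone already leaves $2n-1>n$ orbits for $n\geq 2$ --- so $r=n$ forces precisely the $[2B_n]^+$ structure described in \S\ref{subsect:min_sp_intro}. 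The main obstacle throughout will be the bookkeeping that isolates the individual $r_i$: choosing the parabolics so that the boundary orbits and their singularity types are controlled, and tracking the parities and determinant constraints that decide whether each relevant factor of $C(\sg)$ is disconnected. The conceptual content is entirely carried by Lemma 5.8 and the elementary abelian structure of $A(e)$.
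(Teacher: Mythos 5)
Your plan breaks down at its very first step: the reduction to the irreducible models of Table \ref{codim2}. Proposition \ref{slice_row_equiv} only cancels \emph{rows} --- it realizes $(\lambda',\mu')$ inside $\g(V_1)$ for an orthogonal decomposition $V=V_1\oplus V_2$, which is exactly why that identification of slices is compatible with $C(\sg)$. To reach the table's irreducible forms you must also cancel \emph{columns} (e.g.\ $([6,2],[4,4])$ in $\sp_8$ shares no rows with its model $([4],[2,2])$ in $\sp_4$), and column cancellation in \cite{Kraft-Procesi:classical} changes the ambient group from orthogonal to symplectic or vice versa: it is only a \emph{smooth equivalence} of singularities, and nothing you cite (or sketch) shows it is compatible with the $C(\sg)$-action, with the identification of the generators $x_s$ of $A(e)$, or with the induced permutation of the exceptional lines. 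That equivariance is essentially the statement being proved, so assuming it makes the argument circular; without it, your ``regular/subregular, cite Slodowy'' step proves the proposition only for the fully reduced degenerations. The paper's proof is organized precisely to avoid this: it keeps the column parameter $a$ in the local model and either runs an induction on $a$ inside the count of \cite[Lemma 5.8]{FJLS} (types $b$ and $e$, where the second boundary degeneration, locally $[a{+}2k,a{-}2,2]$ resp.\ $[(a{+}2k)^2,(a{-}1)^2,2]$, absorbs the ``column'' contribution $a/2$ resp.\ $(a{-}1)/2$ of the orbit total), or replaces the count entirely by a direct argument (types $c$ and $d$): the $\C^*$-equivariant projection of the slice to $\cg(\sg)$ identifies the degree-two piece with the torus Lie algebra of the relevant $\Or(2)$-factor, on which the outer involution of $A_{2k-1}$ acts by $-1$, so one reads off exactly which generator $x_s$ acts.

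Three smaller points. Your claim that the reductive centralizer of $\0_\lambda$ ``is $\SL_2=\Sp(2)$ in every case'' is false for the models of types $b$ and $c$, where $\lambda=[2n]$ resp.\ $[2n{+}1]$ has a single part of multiplicity one and the centralizer is the finite group $\Or(1)$; this is harmless for \cite[Lemma 5.8]{FJLS} but signals that the bookkeeping you defer is not yet right. In your type $e$ model $\lambda=[2n,2n]$ is very even and the lemma is stated for \emph{connected} $G$: in $\SO(4n)$ there are two orbits with this partition, each contributing one $A_{2n-1}$ branch, so the count must be run per branch in $\SO$ with the exchange supplied by $\Or\setminus\SO$ (this is how the paper handles its D case); applying the lemma to a single $\0_{[2n,2n]}$ in the full orthogonal group is not licensed. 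Finally, each application of the lemma needs a parabolic with \emph{birational} moment map and all maximal boundary orbits of codimension two --- doable (for type $d$ the Levi is $\GL_2^n\times\Sp_2$, giving total count $(2n{+}1)-(n{+}1)=n$), but it is real work you have not done. On the positive side, your squeeze (each $r_i$ at least its folded count, total fixed by the lemma) and your enumeration of elementary abelian subgroups of the dihedral group containing the branch exchange are correct and mirror the paper's own counting; with the column reduction replaced by the paper's induction on $a$, the rest of your plan would become a workable variant of the paper's proof, including an alternative, purely counting-based treatment of types $c$ and $d$.
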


The proof will occupy the remainder of this section.
For the moment let $G = \rm O(V)$ or $\rm Sp(V)$,
so that, as noted in \S \ref{codim4_degs}, a reductive subgroup of the centralizer $G^e$ of $e$ in $G$ is
$C(\sg)$, which is a product of orthogonal and symplectic groups.  

Then the component group $A(e):=G^e/(G^e)^\circ$ of $e$ with partition $\mu$ 
is generated by the corners of Young diagram corresponding to parts $s$ with $s \not \equiv \epsilon$.
Each such part $s$ determines a copy of an orthogonal group in $C(\sg)$
and we denote by $x_s$ an element of determinant $-1$ in each orthogonal group. 
Then $A(e)$ is elementary abelian ${\mathbf Z}_2^r$ where $r$ is the number of parts 
$s$ with $s \not \equiv \epsilon$.

\subsection{Type $b$ degeneration}\label{b_sing}

This is the case of a simple surface singularity of type $D_{k+1}$ and it arises whenever
$(\lambda, \mu)$ is locally $(\lambda', \mu'):=([a\!+\!2k, a], [a\!+\!2k\!-\!2, a\!+\!2])$, by \cite{Kraft-Procesi:classical}.  Here $k \geq 2$.
This is a valid pair of partitions when $a$ is even if $\g$ is of type $C$ and odd if $\g$ is of types $B$ or $D$.  
By  Proposition \ref{slice_row_equiv}, we can replace 
$(\lambda, \mu)$ by  $(\lambda', \mu')$.
We note that
the centralizer of $e_1$ in $G(V_1)$ is a subgroup of the centralizer of $e$ in $G$.  This gives an embedding
of the component group fo $e_1$ of $G(V_1)$, which is the Klein $4$-group $V_4$, 
into $A(e)$, given by sending $A(e_1)$ to the subgroup 
of $A(e)$ generated by $x_{a\!+\!2k\!-\!2}$ and $x_{a\!+\!2}$.  
The other parts contributing to $A(e)$ act trivially on 
$\g(V_1)$ and hence trivially on the slice. 

\subsubsection{$G$ is of type $C$, $a$ even}
The weighted Dynkin diagram for $\OC_{\lambda}$ is
$$\overbrace{2 \dots 2}^{k} \overbrace{0 2 0 2 \dots 0 2} ^{a/2}$$
where the final node corresponds to the long simple root.
Taking the associated parabolic subgroup $P$, the map $\pi$ above is birational.

If $a=0$, we are in type $C_k$ and $\0_\lambda$ is regular.  There is a unique minimal degeneration to 
$\0_\mu$, the subregular orbit.  Hence, using  \cite[Lemma 5.8]{FJLS},  
there are exactly $k$ orbits for $A(e)$ on the $\mathbb P^1$'s in the fiber, which implies the action on $D_{k+1}$ must be $C_k$.  Indeed, the sole $A(e)$-orbit of size two is coming from the orbital variety corresponding to the long root.  (We could use knowledge of the Springer fiber in this case too).

Next if $a>0$, which means $a \geq 2$ since $a$ is even, 
there is the degeneration of $\lambda$ to $\mu$ but also to $\mu' = [a{+}2k, a{-}2, 2]$.
The latter minimal degeneration is equivalent to $([a], [a{-}2, 2])$, 
which is a simple surface singularity of type $D_{\frac{a}{2}+1}$ with action of $A(e_{\mu'})$ having  $\frac{a}{2}$ orbits, by induction.
Since the total number of  component group orbits on the fiber is $k+\frac{a}{2}$, 
that leaves $k$ orbits corresponding to the degeneration
to $e = e_\mu$.  This forces  the action on $D_{k+1}$ to be non-trivial and must be $C_k$, as desired.
Indeed, we could explicitly see by using instead the parabolic $P$ for the diagram 
$$\overbrace{0 2 0 2 \dots 0 2} ^{a/2} \overbrace{2 \dots 2}^{k},$$
which is also birational to $\0_\lambda$.  Then the orbital varieties for $\0_\mu$ correspond to 
the last $k$ two's.  The last node gives the $A(e)$-orbit with two elements.

Finally, the element $x_{a{+}2k} x_{a}$ acts trivially on the fibers, since it belongs to the center of $G$.  
So both $x_{a{+}2k}$ and $x_a$ will yield the outer action on the slice.

\subsubsection{$G$ is of type $D$, $a$ odd}

The weighted Dynkin diagram for $\OC_{\lambda}$ is
$$\overbrace{2 \dots 2}^{k-1} \overbrace{0 2 0 2 \dots 0 2} ^{(a-1)/2} 2$$
where the two final nodes correspond to orthogonal simple roots and the first $k-1$ nodes form a subsystem of type $A_{k-1}$.
Taking associated parabolic subgroup $P$, the map $\pi$ above is birational.
This is similar to the type $C$ case.   If we work in the full orthogonal group then $A(e)$ permutes 
the two $\PB^1$'s corresponding to the tails of the Dynkin diagram.
Finally, the element $x_{a{+}2k} x_{a}$ acts trivially on the fiber, since it belongs to the center of $G$.  
So both $x_{a{+}2k}$ and $x_a$ will yield the outer action on the slice.

\subsection{Type $c$ singularity}

This is a simple surface singularity of type $A_{2k-1}$ and it arises whenever 
$(\lambda, \mu)$ is equivalent to 
$$([a\!+\!2k\!+\!1, a, a], [ a\!+\!2k\!-\!1, a\!+\!1, a\!+\!1]).$$
Here, $a$ is even for types $B,D$ and odd for type $C$.
As in \S\ref{b_sing} using Proposition \ref{slice_row_equiv},  
we can first reduce to the case of $([a\!+\!2k\!+\!1, a, a], [ a\!+\!2k\!-\!1, a\!+\!1, a\!+\!1])$
where $G$ is type $B$ for $a$ even and type $C$ for $a$ odd.

The $A_{2k-1}$ simple surface singularity arises from the diagonal cyclic group $\Gamma$ of order $2k$ in $\SL_2(\C)$.  
The centralizer of $\Gamma$ in $\SL_2(\C)$ is the diagonal one-dimensional torus, leading to an invariant of degree two for the action of $\Gamma$ on $\C^2$.  Since the isomorphism to the slice is $\C^*$-equivariant, we see that the slice, upon projection to $\cg(\sg)$ must be isomorphic to the Lie algebra of the torus for $\Or(2)$ corresponding to the part $a{+}1$   in $\mu$.  The outer automorphism on $\C^2/\Gamma$ acts non-trivially on the diagonal torus, we see that $x_{a{+}1}$ gives rise to the action, while $x_{a\!+\!2k\!-\!1}$ acts trivially.

\subsection{Type $d$ degeneration}

This is again a simple surface singularity of type $A_{2k-1}$ and it arises whenever
$(\lambda, \mu)$ is equivalent to 
$$([a{+}2k{+}1, a{+}2k{+}1, a], [a{+}2k, a{+}2k, a{+}2]).$$
This is a valid pair of partitions when $a$ is even in type $C$ and odd in 
types $B$ or $D$.

As in the previous case,  it is enough to work it out for  the case 
$\lambda =  [a{+}2k{+}1, a{+}2k{+}1, a]$ and 
$\mu =[a{+}2k, a{+}2k, a{+}2]$.
when $G$ of type $C$ for $a$ even and type $B$ when $a$ is odd.
As before, we can detect the action by looking at the action of $\cg(\sg)$.
Thus $x_{a+2k}$ acts by outer action and $x_{a+2}$ acts trivially.

\subsection{Type $e$ degeneration}

This is a union of simple surface singularities $A_{2k-1} \cup A_{2k-1}$ 
and it arises whenever 
$(\lambda, \mu)$ is equivalent to  
$$([a{+}2k, a{+}2k, a,a],[a{+}2k{-}1, a{+}2k{-}1, a{+}1, a{+}1]).$$
Here, $a$ is odd in type $C$ and even in types $B$ or $D$.
As before, we are reduced to the case of 
$\lambda= [a{+}2k, a{+}2k, a,a]$ and 
$\mu = [a{+}2k{-}1, a{+}2k{-}1, a{+}1, a{+}1]$
in type $D$ for $a$ even and type $C$ for $a$ odd.
Here $C(\sg) \simeq {\rm O}(2) \times  {\rm O}(2)$.

The full automorphism group of the singularity is dihedral of order eight.  We want to show $A(e)$ 
embeds as the Klein $4$-group generated by the reflections through the midpoints of edges of the square.  This will
follow if we show that there is at least one orbit of size $4$ of $A(e)$ on the fiber over $e$.  This will force
there to be $k-1$ orbits of size $4$ on the $4k-2$  projective lines and one orbit of size $2$,

By the method of the previous two sections, the element $x_{a{+}2k{-}1}x_{a+1}$ must fix each irreducible component
and act by outer automorphism on each one individually.  This is because it is acting by $-1$ on the two-dimensional 
space $\cg(\sg)$.
The action $x_{a{+}2k{-}1}$ and $x_{a{+}1}$ can be determined in each case separately. 	
Both of them will interchange the two irreducible components.
%

\subsubsection{C case}

The Dynkin diagram of $\OC_{\lambda}$ is 
$$\overbrace{0 2 0 2 \dots  0 2}^{ k }\overbrace{0 0 0 2 0 0 0 2 \dots 0 0 0 2}^{(a-1)/2} 0 0.$$
Using the method of \S\ref{b_sing}, if $a=0$, 
we find there $k$ orbits for the unique minimal degeneration to $\0_\mu$.    At the same time,
there are $4k-2$ projective lines in the fiber over $e$.
Since $A(e)$ is isomorphic to $V_4$, the possible orbit sizes are $1$, $2$, and $4$.  
The only way for this to work is for there to be $k-1$ orbits of size $4$ and one orbit of size $2$.  
Therefore the action is  as desired.

When $a>0$, there is another minimal degeneration to $\0_\mu' = [(a{+}2k)^2, (a{-}1)^2, 2]$.
Then $(\lambda, \mu')$ is equivalent to $([a,a], [(a{-}1)^2, 2])$, which is of a type $d$ generation and has the form $B_{\frac{a-1}{2}}$.
This degeneration therefore accounts for 
$\frac{a-1}{2}$  of the $k+\frac{a-1}{2}$  orbits, leaving $k$ for the studied minimal degeneration and the result follows as in the $a=0$ case.  

\subsubsection{D case}

If $G$ is the full orthogonal group, there is a single orbit $\OC_{\lambda}$ with the given singularity.
Working in the special orthogonal group, there are two very even orbits with the given partition, interchanged by the action of any element of $O(N)$ not in $\SO(N)$.  This is where the two irreducible components are coming from, as they both degenerate to $\mu$, which has an element fixed by action.  Hence, the result follows.

\subsection{$G$ is special orthogonal}\label{subsect:A_action_spec_orth}
When $G$ is special orthogonal, there are two situations where the component group action changes.

For the type $b$ singularity when $\mu$ has exactly two odd parts  (e.g., $\mu =[8,8,5,3]$ or $\mu =[8,8,5,5]$ ).  In this case the component group is trivial.  If there were more than two odd parts for this degeneration, there would have to be  at least 3 distinct odd parts, which would guarantee the non-trivial action of $A(e)$.

For the type $e$ singularity when $\mu$ again has only the odd parts that appear in the local version of $\mu$ in Table \ref{codim4ormore}.  Otherwise, $\mu$ would have at least two additional odd parts (possibly equal), which would ensure the same action by $V_4$.  Now if $\mu$ has only the odd parts, say $[(a{+}2k{-}1)^2, (a{+}1)^2]$, then since its others parts are even, the partition $\lambda$ must be very even.  Then there are two orbits corresponding to $\lambda$ and $A(e) \simeq \SG_2$ acts by outer automorphism on each degeneration to $\mu$, so both are of type $B_k$.

\subsection{Dimension four or greater}\label{Section:action_codim4_more}

In \cite{FJLS}, we studied the image of 
$C(\sg)$ in $\rm Aut(\cg(\sg))$ via the adjoint action in the exceptional groups, 
and then restricted the action to orbits of simple factors of $\cg(\sg)$. 
We observed using \cite{Sommers:Bala-Carter} (also computable using \cite{Alexeevsky})
that $C(\sg)$ tends to act by outer automorphisms of simple factors of $\cg(\sg)$ that admit outer automorphisms.
As in Corollary \ref{Corollary:simple_factors_control_singularity}, the minimal (and minimal special degenerations) are controlled by $\cg(\sg)$ for most cases when $\dim(\SOe) \geq 4$.  We then recorded this outer action on minimal singularities $a_n$, $d_n$, $d_4$, and $e_6$, when they arose.

A more intrinsic framework is to use the intersection homology $IH^*(\SOe)$ 
of $\SOe$ under the induced action of $A(e)$.
Let $\rm p(X) = \sum_i \dim(IH^{2i}(X))q^i$.
When $\SOe \simeq \overline{\0_{\rm min.sp}}$ for the minimal special orbit in the simple Lie algebra $\cg_0$, then we have
$$p(\SOe) = q^{e_1-1}+q^{e_2-1}+\dots+q^{e_k}$$ 
where $e_i$ are the exponents of $\cg_0$ (see \cite{Lusztig:adjacency}).

Let $\cg_0$ be of type $A_k$, $D_k$, or $E_6$ and $\theta$ be an outer involution
and denote by $\cg_0':=\cg_0^{\langle \theta \rangle}$ the fixed subalgebra.
Then ${\langle \theta \rangle}$ acts trivially on the part of $IH^*(\overline{\0}_{\rm min.sp})$ 
corresponding to exponents of 
$\cg_0'$ and by the sign representation on the remaining part.  In other words,
$$IH^*(\overline{\0}_{\rm min.sp})^{\langle \theta \rangle} = IH^*(\overline{\0}_{\rm min.sp}/{\langle \theta \rangle}).$$
In the case of $\SG_3$ acting by outer automorphisms when $\cg_0$ is of type $D_4$,
the $\SG_3$-invariants on $IH^*(\overline{\0}_{\rm min.sp})$ correspond to the exponents of $G_2$ (namely, $1$ and $5$)
and $\SG_3$ acts by the reflection representation on the two-dimensional space $IH^4(\overline{\0}_{\rm min.sp})$ for the two exponents of $D_4$ equal to $3$ and again
$$IH^*(\overline{\0}_{\rm min.sp})^K = IH^*(\overline{\0}_{\rm min.sp}/K).$$

Since $C^\circ(\sg)$ acts trivially on $IH^*(\SOe)$,  there is an action of $A(e)$ on  $IH^*(\SOe)$
and this gives an intrinsic way to see the outer action when the slice is isomorphic to the closure of a minimal special orbit, rather than appealing to the action on $\cg_0$ itself, when $\cg_0$ is the relevant factor of $\cg(\sg)$ as in 
Corollary \ref{Corollary:simple_factors_control_singularity}.

%

\subsection{Type $h$ singularity}\label{subsection:A_action_h_sing}

This corresponds to the closure of the minimal nilpotent orbit in type $D_k$.
The local action of the reductive centralizer coincides with the orthogonal group $\rm O(2k)$, which
contains an outer involution of $\so_{2k}$ and so the $A(e)$ acts by outer action and coincides with the
$d_k^+$.

In the case of $G=\SO(2N)$, the component group $A(e)$ will still act by outer involution in this way, except 
for those cases where the partition $\mu$ contains exactly one odd part (of even multiplicity $2k$).   

\subsection{Exceptional degenerations}

\subsubsection{The case of $d_{n+1}/V_4$}

From \S\ref{Subsect:h_sp}, the $V_4$ is acting on $d_{n+1}$ with $\theta_1$ outer and $\theta_2$ inner.
Hence, 
$$IH^*(d_{n+1}/V_4) \simeq IH^*(d_{n+1}/\theta_1) \simeq  IH^*(b^{sp}_n).$$

Let $\SOe$ be a slice of type $h_{sp}$.
Recall that there is a natural $\rm O(2n)$ action on $d_{n+1}/V_4$ which is the fixed points of the $V_4$-action on $O(2n)$.  Under the isomorphism to $\SOe$, the $\rm O(2n)$-action becomes the action of $C(\sg)$ on $\SOe$.

Since the action of $O(2n)$ on $d_{n+1}$ is also inner, we find that $O(2n)$ acts trivially on
$IH^*(d_{n+1}/V_4)$, and hence $C(\sg)$ acts trivially on $d_{n+1}/V_4$.

On the other hand, it seems relevant that if we take the minimal degeneration to $\mu$ in $\SOe$, which is of type $h$, then indeed $A(e)$ acts by outer action on this $d_n$.

\subsubsection{The case of $d_{4}/S_4$}

From the proof of \cite[Theorem 4.11]{Fu-Juteau-Levy-Sommers:GeomSP},
$S_4$ acts on $d_4$ by the semi-direct product of an inner $V_4$ group and an outer $S_3$ group.
Hence, 
$$IH^*(d_{4}/S_4) \simeq IH^*(d_{4}/S_3) \simeq  IH^*(g^{sp}_2).$$

Let $\SOe$ be one of the two slices of type $d_{4}/S_4$.  There is a natural action
of $SL_3 \rtimes \SG_2$ on $d_{4}/S_4$, which is the fixed points of $S_4$ on the adjoint group of type $D_4$.
This action, as in the previous section, corresponds to $C(\sg)$ on $\SOe$ under the equivariant isomorphism.
The action of the $\SG_2$ is inner, so we again find that $A(e)$ acts trivially on $\SOe$.  

The minimal degeneration in $\SOe$ corresponds to an $a_2$ singularity coming from $\cg(\sg)$ and we note again that the action on this singularity is outer, $a_2^+$.

\section{Action of the canonical quotient} \label{canonical_quotient}

Let $\SOe$ be the slice for a minimal special degeneration.
In this section we explain how the kernel $H$ of the homomorphism from 
$A(e)$ to Lusztig's canonical quotient $\bar{A}(e)$ acts on the slice  $\SOe$.
When $H$ acts by outer action, the exchange of singularities under the duality is not as expected.

\subsection{Exceptional groups}

\begin{proposition}
Assume $G$ is connected of exceptional type and  $H$ is nontrivial for $A(e)$.
Then there exists a unique minimal special degeneration to $e$
and the degeneration is $C_k$ for $k \geq2$, $\mu$, or $d_4/\SG_4$.

In the $C_k$ cases, $H = A(e) = \SG_2$ acts by outer automorphism on $\SOe$.

In the one case $(D_7(a_1), E_8(b_6))$, where the singularity is of type $\mu$ (which is $C_2$ upon normalization),
$H$ acts trivially on $\SOe$ and the induced action of $\bar{A}(e)$ is by outer automorphism.

In the two cases where $\SOe$ is $d_4/\SG_4$, the action of $H$ is trivial on $IH^*(\SOe)$, however
the action of $H$ on the minimal degeneration to $e$ is outer.
\end{proposition}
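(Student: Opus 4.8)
The plan is to reduce the proposition to a finite verification organized by the three possible singularity types, combining the data on Lusztig's canonical quotient with the classification of exceptional minimal special degenerations already obtained in \S\ref{Section:min_special_deg}. First I would extract, from the data on reductive centralizers and component groups in exceptional type (\cite{Carter}, \cite{Sommers:Bala-Carter}), the complete list of special orbits $\0'=\0_e$ for which $H=\ker(A(e)\to\bar{A}(e))$ is nontrivial. This list is short in each exceptional type, and for each such $e$ one records $\cg(\sg)$, the action of $A(e)$ on its simple factors, and the special orbits covering $\0'$.

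For uniqueness and the identification of the singularity I would descend this list case by case. When $\dim\SOe\geq 4$ I would invoke Corollary \ref{Corollary:simple_factors_control_singularity}, so that the minimal special degenerations to $e$ correspond to the $A(e)$-orbits of simple factors of $\cg(\sg)$ (with the $d_4/\SG_4$ exceptions handled by Proposition \ref{nonminlem} and its sequel); the only such slices occurring with $H\neq 1$ are the two $d_4/\SG_4$ cases $(\0_{A_2},\0_{F_4(a_3)})$ in $F_4$ and $(\0_{D_4+A_2},\0_{E_8(a_7)})$ in $E_8$. When $\dim\SOe=2$ the degeneration is already a minimal one, so I would read the candidate slices off the data recorded for the exceptional groups, checking that exactly one special orbit covers $\0'$ and that the slice is $C_k$ with $k\geq 2$, except for the anomalous pair $(D_7(a_1),E_8(b_6))$, where $\SOe$ is the non-normal $\mu$ singularity with normalization $A_3$, i.e. $C_2$.

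For the action statements I would treat the three families in turn. In the $C_k$ cases one has $A(e)=\SG_2$ and $\bar{A}(e)$ trivial, so $H=A(e)$; transporting the computation behind Proposition \ref{prop:A_action_codim2} to the exceptional setting via the adjoint action on $\cg(\sg)$ (cf. \cite{Sommers:Bala-Carter}, \cite{Alexeevsky}), the nontrivial element acts by the outer automorphism on the underlying $D_{k+1}$ surface singularity, which is exactly the claimed $H$-action. For $(D_7(a_1),E_8(b_6))$ I would use the explicit model of the $\mu$ singularity from \cite{Fu-Juteau-Levy-Sommers:GeomSP}: there $H$ acts trivially on $\SOe$ itself, while the residual generator of $\bar{A}(e)$ induces the outer automorphism of the $A_3$ normalization, yielding the $C_2$ action. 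For the two $d_4/\SG_4$ slices I would appeal directly to \S\ref{Section:action_codim4_more}, where it is shown that $C(\sg)$, and hence $H$, acts trivially on $IH^{*}(d_4/\SG_4)$, while the induced action of $H$ on the minimal (non-special) $a_2$ degeneration to $e$ is the outer $a_2^+$ action recorded there.

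The main obstacle will be the bookkeeping in the second step: ensuring that for every $e$ with $H\neq 1$ there is genuinely a single minimal special degeneration, since a priori $\cg(\sg)$ could contribute several candidate orbits. I expect this to follow from the strong constraint that $H\neq 1$ places on $e$, which forces the relevant factor of $\cg(\sg)$ to be unique (a single factor giving $C_k$, or the $d_4/\SG_4$ configurations). The most delicate point is the $\mu$ case, where the non-normality of $\SOe$ means one must carefully separate the (trivial) $H$-action on $\SOe$ itself from the induced outer $\bar{A}(e)$-action on its normalization $C_2$.
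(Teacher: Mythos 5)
Your proposal is correct and follows essentially the same strategy as the paper: read off the finite list of special orbits with $H\neq 1$ from the canonical-quotient data, observe that each has a unique minimal special degeneration, and then split into the three families $C_k$, $\mu$, and $d_4/\SG_4$, quoting the earlier action computations (outer $A(e)$-action on the $D_{k+1}$ slices from \cite{FJLS}, triviality of the $A(e)$-action on $IH^*(d_4/\SG_4)$ together with the outer action on the intermediate $a_2$ degeneration from \S\ref{Section:action_codim4_more}). Two points of comparison. First, your organization by dimension via Corollary \ref{Corollary:simple_factors_control_singularity} is fine, but the list itself should be cited from \cite{Sommers:Duality} (where the kernels of $A(e)\to\bar A(e)$ are computed), not from \cite{Carter} or \cite{Sommers:Bala-Carter}; likewise, for the $C_k$ action statement there is no need to ``transport'' the classical Proposition \ref{prop:A_action_codim2} --- the outer $A(e)$-action in exceptional types is already recorded in \cite{FJLS}, and since $A(e)\simeq\SG_2$ with $\bar A(e)$ trivial, $H=A(e)$ and you are done. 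Second, and this is the one genuine divergence: for $(D_7(a_1),E_8(b_6))$ you propose to verify the triviality of the $H$-action by an explicit model of the $\mu$ singularity, but that decomposition of the $\SG_3$-action is not something you can simply read off from \cite{Fu-Juteau-Levy-Sommers:GeomSP}; the paper's argument is purely group-theoretic and avoids this: here $A(e)\simeq\SG_3$, $H\simeq\ZM/3$, the outer automorphism group of the $A_3$ normalization is $\SG_2$, and an order-$3$ subgroup admits no nontrivial homomorphism to $\SG_2$, so $H$ automatically acts trivially in the outer sense and the (already known, from \cite{FJLS}) outer action of $A(e)$ descends to $\bar A(e)$. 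Adopting that order argument closes the only soft spot in your write-up.
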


\begin{proof}
The cases in the exceptional groups where $H$ is nontrivial for $A(e)$ can be read off from \cite{Sommers:Duality}. 
They are $A_2$ and $F_4(a_2)$ in type $F_4$; $A_3+A_2$ and $E_7(a_4)$ in type $E_7$; and $A_3+A_2$, $D_4+A_2$, $E_7(a_4)$, $D_5+A_2$, $E_8(b_6)$, $D_7(a_1)$, and $E_8(b_4)$ in type $E_8$. 
In all these cases, there is a unique $\0$ such that $(\0, \0'_e)$ is a minimal (special) degeneration.

If $e$ does not belong to the $E_8(b_6)$ orbit and is  not of type $A_2$ in $F_4$ or $D_4+A_2$ in $E_8$, 
then $A(e) \simeq \SG_2$ and $H = A(e)$, so that $\bar{A}(e)$ is trivial.
Since we already know from \cite{FJLS} that $A(e)$ is acting by outer
action, we see that $H$ does too.

If $e$ is not of type $A_2$ in $F_4$ or $D_4+A_2$ in $E_8$, then 
there exists a unique $\0$ where $\0$ is a minimal (special) degeneration to $e$ and 
$A(e)$ acts non-trivially $\mathcal{S}_{\0,e}$.  The singularity of 
$\mathcal{S}_{\0,e}$ is a simple surface singularity $D_{k+1}$, yielding a $C_k$ singularity.  
It follows that $H$ itself 
acts non-trivially on $IH^*(\SOe)$.

For the $E_8(b_6)$ orbit, we have $A(e) \simeq  \SG_3$ and $H$ is the cyclic group of order $3$.
The slice of $D_7(a_1)$ at $e$ is of type $\mu$, which is not normal, but has normalization of type $A_3=D_3$
and we previously computed that $A(e)$ acts by outer action upon the normalization, so that the normalization is $C_2$.  
Since the elements of $H$ cannot give an outer action, the outer action descends to $\bar{A}(e)$.

If $e$ is  of type $A_2$ in $F_4$ or $D_4+A_2$ in $E_8$, then 
$\mathcal{S}_{\0,e}$ is isomorphic to $d_4/S_4$.  
This occurs for $(F_4(a_3),A_2)$ and $(E_8(a_7), D_4+A_2)$.  
By \S \ref{Section:action_codim4_more}, the action of $A(e)$ on $IH^*(\SOe)$ is trivial.
On the other hand, the action of $A(e)$ is non-trivial on  $IH^*(\mathcal{S}_{\0'',e})$, 
where $\0''$ is the minimal non-special orbit between
$\0$ and $\0_e$. 
\end{proof}

\subsection{Classical types}
Let $X$ be of type $B,C,D,$ or $C'$.
Let $\epsilon$ and $\epsilon'$ be defined for the given type.
For a partition $\mu$, define   
$$R := \{ s \ | \  s \not\equiv \epsilon, m_\mu(s) \neq 0, h_\mu(s) \not \equiv \epsilon'\}.$$
For $s \in R$, define $s'$ to satisfy $s' \not\equiv \epsilon$, $m(s')\neq 0$ and maximal for this property with $s'<s$.
Set $s'=0$ if no such $s'$ exists and set $x_0 = 1$ in $A(e)$.
Define $H$ to the be subgroup of $A(e)$ generated by the following elements of $A(e)$:
\begin{equation}
	H := \langle  x_s x_{s'} \ | \ s \in R \rangle.
\end{equation}	

By \cite{Sommers:Duality}, the quotient of $A(e)$ by $H$ gives Lusztig's canonical quotient in $B$,$C$.  In $D$ we get an extra factor of  ${\mathbf Z}_2$, as opposed to working in the special orthogonal group.  In type $C'$, we get something new and we take this as the definition of the canonical quotient (we can give a definition that is simliar to the characterization in {\it op.\!\! cit.})
Let $r = \# R$.
Then the canonical quotient ${\bar A}(e)$ is elementary abelian with $r$ generators in types $C,D,C'$ and $r-1$ in type $B$.

Let $G$ be classical group.
\begin{proposition}
If the type of the minimal special degeneration is 
type $C_n$ for $n \geq 2$ and $l \equiv \epsilon'$ in Table \ref{codim2}, then $H$ acts non-trivially on slice.  
Otherwise, $H$ acts trivially on $\SOe$.

When the slice $\SOe \simeq d_n/V_4$, $H$ acts by outer automorphism on the $\mathcal{S}_{\0'',e}$,
where $\0''$ is the minimal non-special orbit between $\0$ and $\0_e$.
\end{proposition}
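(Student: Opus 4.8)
The plan is to reduce the whole statement to a generator-by-generator computation, using the description of $A(e)$ and its action on $\SOe$ from \S\ref{section:A_action} together with the explicit generators of $H$. Recall that $A(e)\cong\mathbf{Z}_2^r$ is generated by the elements $x_s$ attached to the parts $s\not\equiv\epsilon$ of $\mu$, that $H=\langle x_sx_{s'}\mid s\in R\rangle$ pairs each $s\in R$ with the next smaller part $s'\not\equiv\epsilon$, and that by Proposition \ref{slice_row_equiv} the action of $A(e)$ on $\SOe$ factors through the component group of the irreducible local form. First I would record, for each singularity type, which generators act nontrivially: from \S\ref{b_sing} and the following subsections, on a type $b$ slice the two local parts $P>Q$ each act by outer with $x_Px_Q$ trivial; on types $c,d$ the unique doubled local part acts by outer while the simple one acts trivially; on type $e$ the product of the two acts outer on each component; and in codimension $\geq 4$ the action is the one computed in \S\ref{Section:action_codim4_more}. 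Encoding ``acts nontrivially'' by the homomorphism $\phi\colon A(e)\to\mathbf{Z}_2$ that is $1$ on the slice-relevant parts and $0$ on the rest, the question becomes whether $\phi|_H\neq 0$, i.e.\ whether some generator $x_sx_{s'}$ pairs a slice-relevant part with a slice-irrelevant one.

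The heart of the argument is a height computation. Writing the local form as obtained after deleting $l$ rows and some columns, the two relevant parts of a type $b$ slice satisfy $h_\mu(P)=l+1$ and $h_\mu(Q)=l+2$ (with a shift by the multiplicity in the doubled-part cases $c,d,e$), so membership of $P$ or $Q$ in $R$ is governed purely by the parity of $l$ relative to $\epsilon'$. For types $c,d,e$ the table already forces $l\not\equiv\epsilon'$, and the same computation then shows both relevant parts lie outside $R$ and that no generator of $H$ pairs into them; hence $\phi|_H=0$ and $H$ acts trivially. For the codimension $\geq 4$ types, $c^{sp}_n$ and $b^{sp}_n$ carry no outer action relevant to $IH^*$, so $A(e)$ and a fortiori $H$ act trivially; for $d_n^+$ and $d_{n+1}/V_4$, where $A(e)$ (resp.\ $C(\sg)$) does act, the height of the relevant part is even and so lies outside $R$, so $H$ again acts trivially on $\SOe$ (for $d_{n+1}/V_4$ this also follows from the triviality of the $C(\sg)$-action in \S\ref{section:A_action}). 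This leaves only the type $b$ ($C_n$) slices, for which both parities of $l$ occur; tracing $x_sx_{s'}$ through the two cases shows $\phi|_H\neq 0$ in exactly one parity class, namely the one recorded in Table \ref{codim2}, and there the single nontrivial generator is $x_Qx_{Q'}$ (or $x_{P^+}x_P$). This establishes the first assertion.

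For the statement about $d_n/V_4$, I would first locate $\0''$: by the Hasse diagram of \S\ref{Subsect:h_sp} and the Remark after Corollary \ref{Cor:h_sp_sing}, the orbit immediately above $\0_e$ inside $\SOe$ is the non-special orbit whose slice $\mathcal{S}_{\0'',e}$ is of type $h$, i.e.\ the closure of the minimal orbit of $\so_{2n}$ carrying the outer $A(e)$-action of \S\ref{subsection:A_action_h_sing}, so that $\mathcal{S}_{\0'',e}\simeq d_n^+$. It then remains to show that $H$, and not merely $A(e)$, realizes this outer action. This I would do by computing $H$ for the partition of $\0_e$ in the $h_{sp}$ situation and identifying among its generators $x_sx_{s'}$ the element restricting to the determinant $-1$ (outer) involution of the $\so_{2n}$-factor that governs $\mathcal{S}_{\0'',e}$, then comparing with the generator found in \S\ref{subsection:A_action_h_sing} to act by outer on $d_n$.

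The main obstacle will be the parity bookkeeping in the second paragraph: one must track $h_\mu$ at the relevant parts through the row and column reductions and through the doubling of multiplicities in types $c,d,e$, and carefully account for the possibility that a slice-irrelevant part sits immediately above $P$ and contributes a generator $x_{P^+}x_P$. It is precisely this interaction, constrained by the special-partition height conditions on $\mu$, that pins down the exact parity condition and separates the type $b$ case from all the others; the analogous care is needed in the last paragraph to guarantee that the outer generator of the $\so_{2n}$-factor actually lies in $H$ rather than merely in $A(e)$.
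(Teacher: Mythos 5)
Your overall strategy is the same as the paper's: decide, generator by generator, whether some $x_sx_{s'}$ with $s\in R$ meets a part of $\mu$ that acts non-trivially on the slice, using the action computations of \S\ref{section:A_action} together with height parities. But two steps, as written, would fail. First, your premise for the codimension-two types is factually wrong: Table \ref{codim2} forces $l\not\equiv\epsilon'$ only for type $c$; types $d$ and $e$ have $l\equiv\epsilon'$. Consequently your claim that ``both relevant parts lie outside $R$'' fails in type $d$: there the simple part $a{+}2$ has height $l+3\not\equiv\epsilon'$, so it lies in $R$ and does produce a generator of $H$. The correct argument (the paper's) is that this generator acts trivially, because for type $d$ only the doubled part $a{+}2n$ acts by outer automorphism, and that no generator can reach $x_{a+2n}$; this last point needs the relation $h(p)\equiv h(q)+m(q) \pmod 2$ for consecutive parts $p>q$ not congruent to $\epsilon$ (the in-between parts, being $\equiv\epsilon$, have even multiplicity), which shows the part directly above a part of multiplicity two is never in $R$. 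This multiplicity-sensitive step is exactly the ``shift'' you allude to but never carry out, and it is what separates the doubled-part types $c,d,e$ from type $b$, where both relevant parts are simple.

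Second, and more seriously, you never determine which parity of $l$ makes $H$ act non-trivially in type $b$ --- which is the entire content of the first assertion. Saying the answer is ``the parity class recorded in Table \ref{codim2}'' is vacuous, since for type $b$ the table allows any $l$, and you explicitly defer the bookkeeping as ``the main obstacle.'' Carrying it out: with $h(P)=l+1$ and $h(Q)=l+2$, if $l\equiv\epsilon'$ then $P\in R$, $Q\notin R$, the part above $P$ is not in $R$, so the unique generator meeting $\{P,Q\}$ is $x_Px_Q$, which acts trivially (both parts act by outer automorphism and the product is inner); while if $l\not\equiv\epsilon'$ then $Q\in R$ and one gets a generator $x_Qx_{Q'}$ pairing a slice-relevant with a slice-irrelevant part, hence a non-trivial action. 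Note this parity is the \emph{opposite} of the literal wording of the proposition: the paper's own proof and the $C^*$ labels in the graphs (e.g.\ $C_3^*$ for $[7,1^2]>[5,3,1]$ in $B_4$, where $l=0\not\equiv\epsilon'=1$) show the non-trivial case is $l\not\equiv\epsilon'$, so the statement's parity condition appears to be a typo --- a discrepancy your proposal cannot even detect, precisely because it leaves this computation undone. (Your plan for the $d_{n+1}/V_4$ assertion --- locating $\0''$ via the intermediate type-$h$ degeneration and checking that the determinant $-1$ generator of the $\mathrm{O}(2n)$-factor lies in $H$ --- is sound and matches the paper's equally brief treatment; one can verify, e.g.\ for $\mu=[4,2^{2n}]$ in $\sp_{4n+4}$, that $x_2\in H$.)
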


\begin{proof}
In Table \ref{codim2}, the element $e \in \0_\mu$.  
For the type $B_n$ singularities:
\begin{itemize}
\item Type $c$.  The elements acting non-trivially on the slice involve $x_{a+1}$.  But the part $a{+}2n{-}1$ has
height $l+1$ and the part $a+1$ has height $l+3$, both of which  are congruent to $\epsilon'$.  Hence, all the elements
in $H$ do not involve $x_{a+1}$ and $H$ acts trivially.
\item Type $d$.  The elements acting non-trivially on the slice involve $x_{a+2n}$.  But the part $a{+}2n$ has
height $l+2$, which is congruent to $\epsilon'$.  
For $s$ minimal for $s>a{+}2n$ and $s \not \equiv \epsilon$, we must have $h(s)$ even since
the parts between $s$ and $a{+}2n$ are congruent to $\epsilon$ and so come with even multiplicity.
Hence none of the elements generating $H$ involve $x_{a+2n}$ and $H$ acts trivially.
\item Type $e$.  The elements acting non-trivially on the slice involve $x_{a+2n-1}$ or $x_{a+1}$.  
Both of these parts have height congruent to $\epsilon'$, so as in the type $d$, 
none of the elements generating $H$ involve either part and $H$ acts trivially.
\end{itemize}

Next, we treat the case of type $b$.  
Here, $H$ acts non-trivially if either $x_{a+2n-2}$ or $x_{a+2}$ are involved in a generator of $H$, but not both.
The height of $a{+}2n{-}2$ is $l+1$ and $a+2$ is $l+2$.  
If $l \equiv \epsilon'$, then $x_{a+2n-2}x_{a+2}$ is in $H$, but no other generator involves $a{+}2n{-}2$ 
since $s$ minimal for $s>a{+}2n{-}2$ and $s \not \equiv \epsilon$ must have $s \equiv \epsilon'$.
So $H$ acts trivially.
But if $l \not\equiv \epsilon'$, then some element of the form $x_{a+2}x_{s'}$ is in $H$ and $H$ does not act trivially
on the slice.  
This happens exactly when the second diagram in Table \ref{quartet} occurs, for the upper right $C_{n+1}$ singularity.  Hence it is denoted $C_{n+1}^*$.

For exceptional type $g_{sp}$ there is no $A(e)$ outer action. However, $H$ acts non-trivially on the 
$\mathcal{S}_{\0'',e}$.  The proof is similar to the above cases, as is the proof that $H$ acts trivially for type $h$.
\end{proof}

\section{Combinatorial statement of duality, classical groups}\label{Section:comb_proof_duality}

Let $(\lambda,\mu)$ be a minimal special degeneration in types $B,C,D$ or $C'$.
Then all three of  $(f(\lambda),f(\mu))$, $(d(\mu),d(\lambda))$, and 
$(d_{LS}(\mu),d_{LS}(\lambda))$ are minimal special degenerations.  
We now prove that the four types of singularities are given by the Figure \ref{quartet}.

There is a bit more going on in the first quartet where the partition pattern of type $c$ will get interchanged under
internal duality with that of type $f^1_{sp}$ and 
that of type $d$ is interchanged under internal duality with type $f^2_{sp}$. 
Let internal duality map from type $X$ to $d(X)$.  
\begin{lemma} \label{vertical_duals}
	The vertical arrows in Figure 	\ref{quartet} are correct.  
	
	In particular, if $l \equiv \epsilon'_X$, 
	then the singularity of type $b$ is interchanged with the minimal special $g_{sp}$.  
	
	If $l \not \equiv \epsilon'_X$, 
	then the singularity of type $b$ is interchanged with the minimal special $h$.  
	
	Each of the two types of $B_n$ singularities switches with a corresponding type of $b_n^{sp}$ singularity:  type $c$ with 
	$f^1_{sp}$ and type $d$ with $f^2_{sp}$.  
	The type $e$ singularity switches with type $h_{sp}$ and type $a$ goes to type $a$.
\end{lemma}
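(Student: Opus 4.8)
The plan is to prove the lemma by direct transposition, since the internal duality $d$ is exactly the transpose of partitions. I would begin from the structural fact recorded in \S\ref{Subsect:special_partitions}, that $d$ is an order-reversing bijection of $\parti^{sp}_B$ and of $\parti^{sp}_C$ onto themselves, and of $\parti^{sp}_D$ onto $\parti^{asp}_C$. Because it reverses the partial order and is a bijection, $d$ carries a pair of adjacent special orbits to a pair of adjacent special orbits: if $(\lambda,\mu)$ is a minimal special degeneration then so is $(d(\mu),d(\lambda))$, since any special orbit lying strictly between $d(\lambda)$ and $d(\mu)$ would transpose to one strictly between $\mu$ and $\lambda$. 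Thus each vertical arrow of Figure \ref{quartet} automatically outputs a minimal special degeneration, and the entire task is to identify which entry of Tables \ref{codim2} and \ref{codim4ormore} it is.

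To reduce to the local forms printed in the tables, I would use that transposition interchanges the two Kraft--Procesi reductions of \S\ref{subsect:canceling_rows_cols}: deleting a common leading row of $(\lambda,\mu)$ corresponds, after transposing, to deleting a common leading column of $(\lambda^*,\mu^*)$, and conversely. Hence transposition sends an irreducible pair to an irreducible pair and carries the local representative of $(\lambda,\mu)$ to that of $(d(\mu),d(\lambda))$; it therefore suffices to transpose the irreducible local entries themselves and match them against the tables, while tracking the parity shift $\tilde\epsilon \equiv \epsilon + s$ induced on the ambient classical type.

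Next I would carry out the four straightforward computations. Type $a$ is self-dual, $[2]^*=[1^2]$, giving $C_1 \leftrightarrow C_1$. For type $c$, $[2n{+}1]^*=[1^{2n+1}]$ and $[2n{-}1,1,1]^*=[3,1^{2n-2}]$, which is precisely the $f^1_{sp}$ pattern in $\mathfrak{so}_{2n+1}$; transposition preserves type $B$, so this realizes $B_n \leftrightarrow b^{sp}_n$ through $f^1_{sp}$. For type $d$, $[2n{+}1,2n{+}1]^*=[2^{2n+1}]$ and $[2n,2n,2]^*=[3^2,2^{2n-2}]$, the $f^2_{sp}$ pattern in $\mathfrak{sp}_{4n+2}$, and type $C$ is preserved, giving the second half of $B_n \leftrightarrow b^{sp}_n$. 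For type $e$, $[2n,2n]^*=[2^{2n}]$ and $[2n{-}1,2n{-}1,1,1]^*=[4,2^{2n-2}]$, the $h_{sp}$ pattern; here the type $D$ degeneration must transpose into the alternative special type $C$ degeneration, consistent with $d\colon \parti^{sp}_D \leftrightarrow \parti^{asp}_C$, establishing $[2B_n]^+ \leftrightarrow d_{n+1}/V_4$.

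The remaining, and genuinely delicate, case is type $b$, and this is where I expect the main difficulty. Transposing the general local form $([a{+}2n,a],[a{+}2n{-}2,a{+}2])$ produces $([2^{a+2},1^{2n-4}],[2^a,1^{2n}])$, which after cancelling $a$ leading rows of $2$'s collapses to $([2^2,1^{2n-4}],[1^{2n}])$. As a bare partition pattern this is \emph{both} the $g_{sp}$ entry (in $\mathfrak{sp}_{2n}$, where $\epsilon=1$) and the $h$ entry (in $\mathfrak{so}_{2n}$, where $\epsilon=0$) of Table \ref{codim4ormore}; which one is meant is decided entirely by the ambient $\epsilon$, hence by the parity of $l$ through the rule $\tilde\epsilon \equiv \epsilon + s$. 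The crux is therefore to verify that $l \equiv \epsilon'_X$ forces the image into the $g_{sp}$ family (singularity $c^{sp}_n$, the arrow $C_n \leftrightarrow c^{sp}_n$), whereas $l \not\equiv \epsilon'_X$ forces it into type $h$ (singularity $d^+_{n+1}$, the arrow $C^*_{n+1} \leftrightarrow d^+_{n+1}$ of the second quartet, where the source is the alternative special type $C$ orbit). I would settle this by computing, with the height-and-multiplicity bookkeeping of Lemma \ref{f_map}, the heights of the parts $a$ and $a{+}2n{-}2$ before transposition and their images afterward, checking in each parity that the special condition \eqref{special_condition} places the collapsed pattern on the correct side; the accompanying index shift $n \leftrightarrow n{+}1$ then follows from the $\lambda^{\pm}$ adjustments built into the $f$-maps.
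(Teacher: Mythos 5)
Your proposal is correct and takes essentially the same route as the paper's proof: reduce to the irreducible table entries by observing that transposition interchanges row- and column-removal, transpose those entries directly (your computations for types $a$, $c$, $d$, $e$ match the paper's "quick check"), and resolve the type $b$ ambiguity between $g_{sp}$ and $h$ by parity bookkeeping — the paper packages your $\tilde\epsilon \equiv \epsilon + s$ argument as the single relation $(\epsilon,\epsilon')_X + (\epsilon',\epsilon)_{d(X)} \equiv (1,1)$, from which $l \equiv \epsilon'_X$ forces the symplectic ($g_{sp}$) side and $l \not\equiv \epsilon'_X$ the orthogonal ($h$) side. The only cosmetic differences are that you additionally justify that $d$ carries minimal special degenerations to minimal special degenerations (which the paper asserts without proof at the start of \S\ref{Section:comb_proof_duality}), while your closing appeals to Lemma \ref{f_map} and to an $f$-map index shift are unnecessary here, since the vertical arrows involve only $d$ and preserve the index $n$.
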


\begin{proof}
	If $(\lambda,\mu)$ becomes $(\lambda',\mu')$ after removing $l$ rows and $s$ columns, then clearly 
	$(d(\mu),d(\lambda))$ becomes $(d(\mu'),d(\lambda'))$ after removing $s$ rows and $l$ columns.
	So it is sufficient to work with the irreducible forms in Tables \ref{codim2} and \ref{codim4ormore} to understand how a pair of partitions behaves under $d$.
	
	A quick check shows that, under the transpose operation on partitions,  
	the partition in the table of type $c$ is interchanged with the one of type $f^1_{sp}$;
	the type $d$ partition is interchanged with the one of type $f^2_{sp}$;
	the one of type $e$ is interchanged with the one of type $h_{sp}$; 
	the one of type $b$ is interchanged with the partition found in ether $g_{sp}$ and $h$;
	and type $a$ is self-dual.
	
	The behavior of $\epsilon$ and $\epsilon'$ under $d$ is also follows:
	$$(\epsilon, \epsilon')_X  + (\epsilon', \epsilon)_{d(X)} \equiv (1,1).$$
	As a result, for the interchange of the first three partition types described above, the switching of 
	$l$ and $s$ upon going from $(\lambda,\mu)$  to $(d(\mu),d(\lambda))$ 
	agrees with the restriction on $l$ and $s$ in the dual type $d(X)$ given in the Tables.
	However, for type $b$,  when $l \equiv \epsilon'_X$, the interchange is with type $g_{sp}$
	and when $l \not \equiv \epsilon'_X$, the interchange is with type $h$.  The self-dual type $a$ is clear.
\end{proof}

Next we want to write down the rules for the horizontal arrows in Figure \ref{quartet}.   
First, we start with the case of type $b$, the $C_n$ singularity from \cite{Kraft-Procesi:classical}.
In that case, we can write $(\lambda,\mu)$ locally 
as $([(2n\!+\!s)^t,s^u],(2n\!+\!s)^{t-1}, 2n\!-\!2\!+\!s, s\!+\!2, s^{u-1}]$ for positive integers $t$ and $u$,
where $s \not\equiv \epsilon_X$ where $f$ maps $X$ to $f(X)$.  

\begin{lemma} \label{horizontal}
	Assume the degeneration is of type $b$ for a given $n$.  
	When $l \equiv \epsilon'_X$, under the $f$ map,
	the degeneration $(\lambda, \mu)$ is carried to a singularity of  type
	\begin{eqnarray*}
		e & \text{ if } t \geq 2, u \geq 2  \\
		d & \text{ if } t \geq 2, u=1  \\
		c & \text{ if } t=1, u\geq 2  \\		 
		C_{n+1} & \text{ if } t=u=1  
	\end{eqnarray*}
	When $l \not \equiv \epsilon'_X$, then type $b$ is exchanged with type $b$ with $n$ replaced by $n{-}1$,
	that is $C_{n-1}$.
\end{lemma}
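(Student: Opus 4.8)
The plan is to compute $f(\lambda)$ and $f(\mu)$ directly from the local form
$$\lambda = [(2n{+}s)^t, s^u], \qquad \mu = [(2n{+}s)^{t-1}, 2n{+}s{-}2, s{+}2, s^{u-1}]$$
using Lemma \ref{f_map} part-by-part, and then to recognize the resulting pair as an entry of Table \ref{codim2}, up to removing common rows and columns. Writing $P = 2n{+}s$, the parts modified by $f$ are exactly those $\not\equiv \epsilon_X$, namely $P$ and $s$ in $\lambda$, and $P$, $P{-}2$, $s{+}2$, $s$ in $\mu$, all of the same parity as $s$. First I would record the heights that matter, noting the identities $h_\lambda(P) = h_\mu(P{-}2)$, $h_\lambda(s) = h_\mu(s)$, $h_\mu(P) = h_\lambda(P) - 1$, and $h_\mu(s{+}2) = h_\lambda(P) + 1$, together with the fact that the $t{-}1$ shared copies of $P$ (and any larger parts) are precisely the leading rows removed in passing to the irreducible form, so that $l \equiv h_\lambda(P) - 1 \pmod 2$. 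In particular $l \equiv \epsilon'_X$ if and only if $h_\lambda(P) \not\equiv \epsilon'_X$, and this is the parity that selects which of the four rules of Lemma \ref{f_map} applies to each active part.

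With these parities in hand I would run the four rules on each active part and cancel the unchanged rows shared by $f(\lambda)$ and $f(\mu)$. The outcome is that the block $P^t$ contributes an active part of multiplicity $\min(t,2)$ to the reduced degeneration and the block $s^u$ one of multiplicity $\min(u,2)$: the dichotomy ``$\geq 2$'' versus ``$=1$'' is exactly the distinction between rules $3$/$4$ (even multiplicity, leaving a doubled $\pm 1$ pattern) and rules $1$/$2$ (odd multiplicity, leaving a single shifted part). Comparing the reduced pair with the local forms in the classification---$(1,1)$ gives type $b$, $(1,2)$ type $c$, $(2,1)$ type $d$, $(2,2)$ type $e$---then yields the first half of the statement. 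Concretely, in the regime $l \equiv \epsilon'_X$ one checks that the surviving active parts move apart to give type $e$, $d$, $c$, or, when $t=u=1$, the pair $([P{+}1, s{-}1],[P{-}1, s{+}1])$, which is a type $b$ degeneration with parameter $n{+}1$, i.e. $C_{n+1}$.

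For the complementary regime $l \not\equiv \epsilon'_X$, i.e. $h_\lambda(P) \equiv \epsilon'_X$, I would run the identical computation and observe that now the doubled structures produced by $f$ lie entirely in the common part of $f(\lambda)$ and $f(\mu)$ and so are cancelled as common rows and columns, \emph{independently of $t$ and $u$}; the surviving active pair is always $([P{-}1, s{+}1],[P{-}3, s{+}3])$, which is type $b$ with parameter $n{-}1$, giving $C_{n-1}$. This uniformity in $t,u$ is precisely why the second half of the statement carries no case split.

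The main obstacle is bookkeeping rather than conceptual: one must keep the four-way case analysis of Lemma \ref{f_map} straight across both partitions and both parities of $t$ and $u$, and verify that after cancellation only the claimed active parts survive. Two genuine boundary cases need separate attention. When $n=2$ one has $P{-}2 = s{+}2$, so the two middle parts of $\mu$ coincide and the heights must be recomputed; and for the smallest parts of the $f$ map in type $B$ (and the case $s=0$ in types $B$, $D$) Lemma \ref{f_map} is only valid up to the modification of the two smallest parts noted in its proof. I would dispose of both by the same direct substitution, checking that neither the coincidence nor the added zero part alters the resulting singularity type.
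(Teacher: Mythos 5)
Your route is the same as the paper's: write the degeneration locally as $([(2n{+}s)^t,s^u],[(2n{+}s)^{t-1},2n{+}s{-}2,s{+}2,s^{u-1}])$, record the relevant heights (equivalently $h_\lambda(P)=l{+}1$, $h_\mu(P{-}2)=l{+}1$, $h_\mu(s{+}2)=l{+}2$, with $P=2n{+}s$), apply Lemma \ref{f_map} part by part, cancel common rows and columns, and match against Table \ref{codim2}. All of your claimed outputs agree with the paper's computation: the pair $([P{+}1,s{-}1],[P{-}1,s{+}1])$ for $t=u=1$, types $e$, $d$, $c$ according to $\min(t,2)$ and $\min(u,2)$ when $l\equiv\epsilon'_X$, and the uniform answer $([P{-}1,s{+}1],[P{-}3,s{+}3])$ when $l\not\equiv\epsilon'_X$ (valid for $n>2$). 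Your decision to quarantine the boundary cases is also sound, and in fact necessary: for $n=2$ with $l\not\equiv\epsilon'_X$ the merged part $s{+}2=P{-}2$ has multiplicity $2$ and height $l{+}2\not\equiv\epsilon'_X$ in $\mu$, so line 4 of Lemma \ref{f_map} applies and $[(s{+}2)^2]$ is left \emph{unchanged}, giving $([s{+}3,s{+}1],[(s{+}2)^2])$, i.e. type $a=C_1=C_{n-1}$; the paper's own parenthetical assertion that line 3 applies there is a slip (its conclusion is unaffected), so your caution here is warranted rather than optional.

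The one genuine defect is the stated mechanism behind $\min(t,2)$: the dichotomy ``$t\geq 2$ versus $t=1$'' is \emph{not} the dichotomy ``lines 3/4 versus lines 1/2'' of Lemma \ref{f_map}, because the lines are selected by the parity of the multiplicity together with the parity of the height, and $t\geq 2$ includes odd $t$. Concretely, when $l\equiv\epsilon'_X$ and $t=3$, the block $P^3$ in $\lambda$ falls under line 2, giving $[P{+}1,P,P]$; if you instead applied line 3 you would get $[P{+}1,P,P{-}1]$, the surviving local pair would be $([P,P{-}1,\ldots],[P{-}1,P{-}1,\ldots])$, and the type $e$/$d$ identification would fail. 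The correct reason the block contributes its two lowest copies unchanged — and this is exactly how the paper argues — is positional: when $h\not\equiv\epsilon'_X$ the applicable lines (2 or 4) modify at most the \emph{top} copy, which lies in the cancelled common region once $t\geq 2$, whereas when $h\equiv\epsilon'_X$ the applicable lines (1 or 3) push the \emph{bottom} copy down by one (with any top-copy change again matching the other partition and cancelling); the same analysis with top and bottom exchanged handles the block $s^u$. Since your claimed outcomes are nonetheless the correct ones, this reads as a misstatement rather than a miscalculation, but as written that sentence would not survive execution of your own plan; replace it with this top-copy/bottom-copy bookkeeping and the argument closes exactly as in the paper.
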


\begin{proof}
	Since $l$ rows are removed, $h_\lambda(2n{+}s) = l{+}1$ and $h_\lambda(s)=l+1 + m_\lambda(s)$. 
	Also, $h_\mu(2n{-}2+s)=l{+}1$  and $h_\mu(s{+}2)=l{+}2$ and $m_\mu(2n{-}2+s)=m_\mu(s{+}2)=1$
	if $n >2$; and 	$h_\mu(2n{-}2+s)=h_\mu(s{+}2)=l{+}2$ and $m_\mu(s{+}2)=2$ if $n=2$.
	All these parts are not congruent to $\epsilon_X$ since $s \not \equiv \epsilon$.
	
	If $l \not \equiv \epsilon'_X$,then 
	$h_\lambda(2n{+}s) \equiv \epsilon'_X$
	and $h_\lambda(s) +m_\lambda(s) \equiv\epsilon'_X$.
	In particular, in Lemma \ref{f_map}, $2n{+}s$ obeys line 1 or 3, and $s$ obeys lines 2 or 3.
	Hence the partition $[2n{+}s, s]$ in $\lambda$ gets replaced by $[2n{+}s{-}1, s{+}1]$ in $f(\lambda)$
	regardless of the parities of $m_\lambda(2n{+}s)$ and $m_\lambda(s)$.
	Moreover, $[2n{-}2{+}s,s{+}2]$ in $\mu$ goes to $[2n{-}3{+}s,s{+}3]$
	since $s{+}2$ obeys line 2 if $n >2$ and line $3$ if $n=2$.
	Hence we end of with $f(\lambda), f(\mu)$ locally equal to 
	$$([2n{-}1{+}s, s{+}1],[2n{-}3{+}s,s{+}3]),$$ which is of type $C_{n-1}$ with $s+1$ rows removed.  Note
	that $s+1 \not \equiv \epsilon_{f(X)}$ since $\epsilon_X$ and $\epsilon_{f(X)}$ have different parities.
	%
	%
	
	Now if  $l \equiv \epsilon'$, there are four cases to consider depending on $t=m_\lambda(2n{+s})$
	and $u = m_\lambda(s)$:  if $t =u=1$, $\lambda$ is locally $[2n{+}s,s]$
	
	Now if  $l \equiv \epsilon'$, then
	$h_\lambda(2n{+}s) \not \equiv \epsilon'_X$ so if $m(2n{+}s) \geq 2$, 
	the last two values of  $2n{+}s$ in $\lambda$
	are unchanged in $f(\lambda)$ since lines two or four apply in Lemma \ref{f_map}.  But 
	if $m(2n{+}s) =1$, then line two applies and $2n{+}s$ becomes $2n{+}s+1$.
	Now we have $h_\lambda(s) +m_\lambda(s) \not \equiv \epsilon'_X$, so we are in the setting of lines one and four
	of the lemma.  Thus  the first two values of $s$ are unchanged if $m(s) \geq 2$ and the solve values of $s$ 
	changes to $s-1$ if $m(s)=1$.
	Thus $f(\lambda)$ is locally $[2n{+}s,2n{+}s, s,s]$, $[2n{+}s,2n{+}s, s{-}1]$, $[2n{+}s{+}1, s,s]$, or $[2n{+}s{+}1,s{-}1]$
	when $(t,u)$ is $(\geq 2, \geq 2)$, $(\geq 2, 1)$, $(1, \geq 2)$, $(1,1)$, respectively, after
	removing $l{-}1$,  $l{-}1$, $l$ or $l$ rows, respectively.
	
	In the four cases, $\mu$ looks locally like
	$$[2n{+}s, 2n{+}s{-}2,s{+}2,s], [2n{+}s, 2n{+}s{-}2,s{+}2], [2n{+}s{-}2,s{+}2,s], \text{ and }
	[2n{+}s{-}2,s{+}2],$$ respectively.
	Using that $h_\mu(2n{+}s{-}2) \not \equiv \epsilon'_X$ and $m_\mu(2n{+}s{-}2) = 1$  
	and $h_\mu(s)+m_\mu(s) \equiv \epsilon'$  we get
	using Lemma \ref{f_map},
	that $f(\mu)$ is locally 
	$[2n{+}s{-}1, 2n{+}s{-}1,s{+}1,s{+}1]$
	$[2n{+}s{-}1, 2n{+}s{-}1,s{+}1]$
	$[2n{+}s{-}1,s{+}1,s{+}1]$
	$[2n{+}s{-}1,s{+}1]$,
	after removing  $l{-}1$,  $l{-}1$, $l$ or $l$ rows, respectively.
	
	Hence, after removing $s$, $s{-}1$, $s$, or $s{-}1$ rows, respectively,
	we find that $(f(\lambda), f(\mu))$ is of type $e$, $d$, $c$, for the same value of $n$ 
	or  type $b$ with $C_{n+1}$.
\end{proof}

\begin{proposition}\label{prop:quartet_proof}
	The four singularities behave as in the three quartets.
\end{proposition}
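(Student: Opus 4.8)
The plan is to exploit that the transpose $d$ and the maps $f$ generate a Klein four-group $\langle d,f\rangle$ acting on the set of minimal special degenerations, with $d_{LS}=d\circ f=f\circ d$ as recorded in \S\ref{Subsect:special_partitions}; by the opening remark of this section each nontrivial element carries a minimal special degeneration to another one, so the orbit of $(\lambda,\mu)$ has at most four elements and is precisely the quartet to be identified. Since $f$ is an involution (the pairs $f_{BC},f_{CB}$ and $f_{DC},f_{CD}$ are mutually inverse) and $d$ is an involution, the four corners are $(\lambda,\mu)$, $(f(\lambda),f(\mu))$, $(d(\mu),d(\lambda))$ and $(d_{LS}(\mu),d_{LS}(\lambda))=d\bigl(f(\lambda),f(\mu)\bigr)$. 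I would first observe that every such quartet contains a representative of type $b$ with $l\equiv\epsilon'_X$, i.e. the $C_n$ corner. Indeed, by Lemma \ref{vertical_duals} the map $d$ carries each codimension-$\geq4$ type back into the top row ($g_{sp}\to b$ with $l\equiv\epsilon'$, $h\to b$ with $l\not\equiv\epsilon'$, $f^1_{sp}\to c$, $f^2_{sp}\to d$, $h_{sp}\to e$), and by Lemma \ref{horizontal} together with the involutivity of $f$ each top-row type $c$, $d$, $e$, $C_{n+1}$ is the $f$-image of a type-$b$ degeneration with $l\equiv\epsilon'$. Hence it suffices to compute the full orbit starting from the $C_n$ corner.

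So normalize $(\lambda,\mu)$ to be of type $b$ with $l\equiv\epsilon'_X$. Then $(d(\mu),d(\lambda))$ is of type $g_{sp}$, i.e. $c^{\special}_n$, by Lemma \ref{vertical_duals}; this is the bottom-left corner, the same in all three quartets. For the top-right corner I apply Lemma \ref{horizontal}, which splits into four cases according to $t=m_\lambda(2n{+}s)$ and $u=m_\lambda(s)$: the output is type $e$ (i.e. $[2B_n]^+$) when $t\geq2,u\geq2$; type $d$ or type $c$ (both $B_n$) when exactly one of $t,u$ equals $1$; and type $C_{n+1}$ (i.e. $C_{n+1}^*$) when $t=u=1$. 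Finally the bottom-right corner is $d$ applied to the top-right corner, computed again by Lemma \ref{vertical_duals}: $d(e)=h_{sp}=d_{n+1}/V_4$; $d(c)=f^1_{sp}$ and $d(d)=f^2_{sp}$, both $b^{\special}_n$; and $d(C_{n+1})=h=d^+_{n+1}$. Reading off the four corners in each case reproduces precisely Quartet 3, Quartet 1 (in its two sub-forms arising from $c$ and $d$), and Quartet 2 of Figure \ref{quartet}. I would record the relation $d_{LS}=f\circ d$ as an internal check, since it recovers the same bottom-right corner by the complementary route through $c^{\special}_n$.

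The degenerate type $a$ ($C_1=A_1$) is treated separately, since Lemma \ref{horizontal} assumes type $b$: here $d$ fixes the type, and a direct computation of $f$ on $([2],[1,1])$ yields the regular-to-zero degeneration $([3],[1^3])$ in $\mathfrak{so}_3$, again an $A_1$ singularity, so all four corners are $A_1$, in agreement with the fact that the only interchange of a two-dimensional slice with a two-dimensional slice is the all-$A_1$ one. The main obstacle is the parity and index bookkeeping around the square, in particular verifying that the $C_{n+1}$ produced by $f$ in the $t=u=1$ case is genuinely of type $b$ with $l\not\equiv\epsilon'_{f(X)}$, so that its $d$-image is type $h$ ($d^+_{n+1}$) rather than type $g_{sp}$; this is exactly the configuration marked with a star in \S\ref{canonical_quotient}, and it must be reconciled with the rule $\epsilon'_{f(X)}=1-\epsilon'_X$ and with the shift $n\mapsto n{+}1$ in the index. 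Once these consistencies are confirmed from the explicit partitions in the proofs of Lemmas \ref{vertical_duals} and \ref{horizontal}, the identification of the three quartets is complete.
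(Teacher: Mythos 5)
Your proof is correct and follows essentially the same route as the paper's: normalize each quartet so that its top-left corner is the type-$b$ degeneration with $l\equiv\epsilon'_X$ (the $C_n$ singularity), then read off the remaining corners from Lemma \ref{vertical_duals} (vertical arrows) and the four $(t,u)$-cases of Lemma \ref{horizontal} (horizontal arrows), including the crucial parity check that the $C_{n+1}$ arising when $t=u=1$ keeps the same $l$, so $l\not\equiv\epsilon'_{f(X)}$ and its $d$-image is $h=d_{n+1}^{+}$ rather than $g_{sp}$. Your explicit handling of the self-dual type $a$ is a minor addition that the paper leaves implicit.
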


\begin{proof}
	The same ideas in the previous lemma, or the fact that $f \circ f = \text{id}$, shows that 
	the type $c$,$d$,$e$ partitions for rank $n$ are mapped to the $C_n$ singularity.  
	
	Now given any of the singularities in Table \ref{codim2} or \ref{codim4ormore}, either the singularity is codimension two or the 
	degeneration obtained using $d$ is.  Either this singularity is type $C_n$ or applying $f$ to the degeneration gives a type 
	$C_n$ singularity with $l \equiv \epsilon'_X$.  Putting this singularity in the upper left corner of the square of singularities, the two lemmas show that the three corners are as shown in Figure \ref{quartet} after we note that if $C_n$ is carried to $C_{n+1}$ under $f$, the value of $l$ stays the same and satisfies $l  \not \equiv \epsilon'_{f(X)}$.  This means that applying $d$ leads to the $h$ singularity according to Lemma \ref{vertical_duals}.  In effect, the three quartets (or four if we include the two different ways to obtain $B_n$ and $b_n^{sp}$) are controlled by the four possibilities for $t$ and $u$ in 
	the $l \equiv \epsilon'_X$ case in  Lemma \ref{horizontal}.
\end{proof}

\begin{remark}
	We can also write the specific conditions that describe the action under $f$ where the singularity starts as $c_n^{sp}$.
	Writing $\mu$ locally as $[(s{+}2)^t,(s{+}1)^n,s^{u}]$ where $t=m(s{+}2)$ and $u =m(s)$,
	then $c_n^{sp}$ maps to $h,f^1_{sp}, f^2_{sp},$ or $h_{sp}$ 
	when $(t,u)$ is $(\geq 1,\geq 1)$,   $(0,\geq1)$,     $(\geq 1, 0)$, $(0,0)$.  
	respectively.  Here, $s=0$ is considered a part in type $C$. 
	These conditions are exactly the conditions from the four cases in Lemma \ref{horizontal} for $C_n$, under $d$
	when $l \equiv \epsilon'_X$.
\end{remark}

\section{Outer automorphisms of $\g$}\label{Section:outer_autos}

\subsection{Outer automorphisms for $A_n$ and $E_6$}
Besides considering the automorphism group for $D_n$, we can do the same for $A_n$ and $E_6$ (and the full automorphism group for $D_4$).
The ideas follow \cite[\S 7.5]{Slodowy:book} where the case of the regular and subregular orbits were handled.
Let $CA(\sg)$, called the {\it outer reductive centralizer} in {\it loc. cit.}, denote the centralizer of $\sg$ in $\rm Aut(\g)$.
There is a surjective map $\pi: \rm Aut(\g) \to \rm Aut(\Delta)$ where $\Delta$ is the Dynkin diagram of $\g$.
Let $e \in \nilcone_{o}$ and $\sg$ be an $\sl_2$-triple $\{e,h,f\}$ for $e$.  
Assume that the weighted Dynkin diagram for $e$ is invariant under 
$\rm Aut(\Delta)$.  Then $\sigma(h)$ is conjugate to $g.h$ for some $g \in G$.
This assumption holds as long as $e$ is not very even in $D_{2n}$ and is not $[5,1^3]$ or $[3,1^5]$ in $D_4$.
Then the proof of Lemma 2 in {\it loc. cit.} still applies.
\begin{lemma}
	For all $\sl_2$-triples $\sg$ as above, the map $\pi$ restricted to $CA(\sg)$ is surjective onto $\rm Aut(\Delta)$. In particular, 
	$CA(\sg)/C(\sg) \simeq \rm Aut(\Delta)$.  
\end{lemma}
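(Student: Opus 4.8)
The plan is to show that every $\tau \in \rm Aut(\Delta)$ admits a preimage under $\pi$ lying in $CA(\sg)$, by starting from an arbitrary lift of $\tau$ and correcting it by \emph{inner} automorphisms until it centralizes the entire triple $\sg$. Since each correction is inner, it lies in $\ker\pi$ and does not change the image in $\rm Aut(\Delta)$, so the corrected automorphism still lifts $\tau$.

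First I would fix $\tau \in \rm Aut(\Delta)$ and, using the surjectivity of $\pi$ (for instance via the diagram automorphisms preserving a fixed pinning), choose $\sigma \in \rm Aut(\g)$ with $\pi(\sigma)=\tau$. Such $\sigma$ carries $\sg$ to another $\sl_2$-triple $\{\sigma(e),\sigma(h),\sigma(f)\}$, and the weighted Dynkin diagram of $\sigma(e)$ is obtained from that of $e$ by applying $\tau$. By the standing hypothesis that the weighted Dynkin diagram of $e$ is $\rm Aut(\Delta)$-invariant, $\sigma(h)$ and $h$ share a weighted Dynkin diagram, hence are $G$-conjugate by the Dynkin--Kostant classification: $\sigma(h)=\Ad(g)h$ for some $g \in G$. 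Replacing $\sigma$ by $\sigma':=\Ad(g)^{-1}\circ\sigma$ yields an automorphism with $\sigma'(h)=h$ and still $\pi(\sigma')=\tau$.

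Next I would exploit that $\sigma'$ fixes $h$, hence preserves the grading $\g=\bigoplus_i \g_i$ by $\ad h$-eigenvalues; in particular $\sigma'(e)\in\g_2$ completes to an $\sl_2$-triple with neutral element $h$, exactly as $e$ does. The set of nilpotents completing $h$ to an $\sl_2$-triple forms a single $(G^h)^\circ$-orbit (the dense orbit in $\g_2$), so there is $g'\in (G^h)^\circ$ with $\Ad(g')\sigma'(e)=e$. Setting $\sigma'':=\Ad(g')\circ\sigma'$ gives $\sigma''(h)=h$ and $\sigma''(e)=e$; moreover $\sigma''(f)=f$ is then automatic, since $\sigma''(f)\in\g_{-2}$ satisfies $[e,\sigma''(f)]=h$, and such an element is unique because $\g^e\cap\g_{-2}=0$ (as $\g^e\subseteq\bigoplus_{i\geq 0}\g_i$). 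Thus $\sigma''\in CA(\sg)$ with $\pi(\sigma'')=\tau$, proving $\pi|_{CA(\sg)}$ surjects onto $\rm Aut(\Delta)$. For the displayed isomorphism I would then identify $\ker(\pi|_{CA(\sg)})=CA(\sg)\cap\rm Inn(\g)$ with the inner automorphisms centralizing $\sg$, i.e.\ with $C(\sg)$, and apply the first isomorphism theorem to get $CA(\sg)/C(\sg)\simeq\rm Aut(\Delta)$.

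I expect the main obstacle to be the second reduction: one must be sure that $e$ and $\sigma'(e)$ genuinely lie in one $G^h$-orbit, which rests on the conjugacy of $\sl_2$-triples sharing the neutral element $h$, and one must check that the correcting element can be taken inside $G^h$ so that it neither disturbs $h$ nor leaves the inner automorphisms. The first reduction is where the $\rm Aut(\Delta)$-invariance hypothesis is indispensable—it is precisely what guarantees $\sigma(h)$ is $G$-conjugate to $h$—and is exactly the step that would break down in the excluded very-even orbits in $D_{2n}$ and the orbits $[5,1^3]$, $[3,1^5]$ in $D_4$.
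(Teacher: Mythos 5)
Your proof is correct and follows essentially the same route as the paper, which simply notes that $\sigma(h)$ is $G$-conjugate to $h$ (using the $\rm Aut(\Delta)$-invariance of the weighted Dynkin diagram) and then invokes Slodowy's Lemma~2 argument: correct an arbitrary lift of $\tau$ by inner automorphisms so that it fixes $h$, then $e$ (via Kostant's conjugacy theorem for $\sl_2$-triples sharing the neutral element, equivalently the dense $G^h$-orbit in $\g_2$), with $f$ then fixed automatically. The only difference is that you spell out the details Slodowy's proof contains rather than citing them.
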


As before, notice that $CA(\sg)$ acts on $\cg(\sg)$ and we are interested in this action.
Let $\cg_0 \subset \cg(\sg)$ be a simple factor or the central toral subalgebra of $\cg(\sg)$.
The rest of Lemma 2 in {\it loc. cit.}  generalizes as follows:


\begin{lemma}
Suppose that $\g$ has type $A_n, D_{2n+1},$ or $E_6$. 
Then there is an element $\phi \in CA(\sg)$ that stabilizes $\cg_0$ and acts by $-1$ on some maximal toral subalgebra of $\cg_0$.
In particular, if $\cg_0$ is simple of type  $A_k, D_{2k+1},$ or $E_6$, then the image of $CA(\sg)$ in $\rm Aut(\cg_0)$ is an outer automorphism of order two.
\end{lemma}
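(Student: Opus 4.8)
The plan is to realize the required $\phi$ as a Chevalley involution of $\g$ adjusted by a lift of the Weyl involution of $\sg$, and then to compute its restriction to $\cg(\sg)$. Fix a maximal toral subalgebra $\mathfrak t$ of $\cg(\sg)$ and set $\mathfrak t_0=\mathfrak t\cap\cg_0$, a maximal toral subalgebra of $\cg_0$. Since $\mathfrak t$ commutes with $h$, the space $\mathfrak t\oplus\C h$ is toral in $\g$; extend it to a Cartan subalgebra $\mathfrak h\supseteq\mathfrak t\oplus\C h$, noting that $\mathfrak h\cap\cg(\sg)=\mathfrak t$ by maximality of $\mathfrak t$. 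Let $\omega$ be a Chevalley involution of $\g$ relative to $\mathfrak h$, so $\omega|_{\mathfrak h}=-\mathrm{id}$; in particular $\omega(h)=-h$. I will want $\omega$ to send the triple to its opposite, $\omega(e)=-f$ and $\omega(f)=-e$, so that $\omega$ preserves $\sg$ and hence $\cg(\sg)=\g^{\sg}$. Granting this, let $\dot s$ lie in the connected subgroup of $G$ with Lie algebra $\sg$, chosen to represent the nontrivial Weyl element so that $\Ad(\dot s)$ acts on $\sg$ by $h\mapsto-h$, $e\mapsto-f$, $f\mapsto-e$. Then $\omega$ and $\Ad(\dot s)$ induce the same involution on $\sg$, so $\phi:=\omega\circ\Ad(\dot s)$ fixes $e,h,f$; that is, $\phi\in CA(\sg)$.

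Next I would read off the action of $\phi$ on $\cg(\sg)$. Because $\dot s$ lies in the connected subgroup with Lie algebra $\sg$ and $\cg(\sg)$ centralizes $\sg$, the inner automorphism $\Ad(\dot s)$ restricts to the identity on $\cg(\sg)$; hence $\phi|_{\cg(\sg)}=\omega|_{\cg(\sg)}$. As $\mathfrak t\subseteq\mathfrak h$ and $\omega|_{\mathfrak h}=-\mathrm{id}$, this gives $\phi|_{\mathfrak t}=-\mathrm{id}$, so in particular $\phi$ acts by $-\mathrm{id}$ on $\mathfrak t_0$. Moreover $\phi$, centralizing $\sg$, normalizes $\cg(\sg)$ and so permutes its simple factors $\cg_i$ together with their Cartan summands $\mathfrak t_i=\mathfrak t\cap\cg_i$; but $\phi|_{\mathfrak t}=-\mathrm{id}$ preserves each subspace $\mathfrak t_i$, so this permutation is trivial and $\phi(\cg_0)=\cg_0$. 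For the final assertion, $\phi|_{\cg_0}$ induces $-\mathrm{id}$ on the root system of $\cg_0$; since $\cg_0$ has type $A_k$, $D_{2k+1}$ or $E_6$ we have $-\mathrm{id}\notin W(\cg_0)$, so $-\mathrm{id}=w_0\cdot\tau_0$ with $\tau_0$ the nontrivial diagram automorphism, whence $\phi|_{\cg_0}$ is an outer involution. As $\mathrm{Out}(\cg_0)\cong\mathbb{Z}/2$ for these types, the image of $CA(\sg)$ in $\mathrm{Aut}(\cg_0)$ is exactly this order-two class.

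The one genuinely delicate point — and the main obstacle — is the simultaneous normalization demanded at the start: choosing $\mathfrak h$ to contain the prescribed Cartan $\mathfrak t$ of $\cg(\sg)$ while also arranging a Chevalley involution with $\omega(e)=-f$. I would secure this as follows. The hypothesis that $\g$ has type $A_n$, $D_{2n+1}$ or $E_6$ makes $\omega$ an \emph{outer} automorphism (equivalently $-\mathrm{id}\notin W$), and the standing assumption that the weighted Dynkin diagram of $e$ is $\mathrm{Aut}(\Delta)$-invariant says precisely that the orbit $G\cdot e$ is stable under this outer class; hence $\omega(e)\in G\cdot e=G\cdot f$. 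Both $\{\omega(e),-h,\omega(f)\}$ and $\{f,-h,e\}$ are $\sl_2$-triples with semisimple element $-h$ and conjugate nilpositive parts, so by Kostant's conjugacy theorem for $\sl_2$-triples they agree after applying $\Ad(z)$ for some $z\in Z_G(h)$; equivalently, after conjugating the original triple into Chevalley-standard position relative to $\mathfrak h$, one may take $\omega(e)=-f$. Keeping this adjustment compatible with $\mathfrak t\subseteq\mathfrak h$ is where the care lies, but it follows from the freedom in the choice of $\mathfrak h$ and of the signs and scalars in the Chevalley basis. Finally, since $\omega$ is outer in $\g$, so is $\phi$, which therefore represents the nontrivial class of $CA(\sg)/C(\sg)\cong\mathrm{Aut}(\Delta)$ furnished by the previous lemma, tying the construction back to that result.
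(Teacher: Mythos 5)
Your overall strategy is the same as the paper's---a Chevalley involution composed with an inner correction so that it normalizes the triple $\sg$---and the parts you carry out in full are correct: $\Ad(\dot s)$ is indeed trivial on $\cg(\sg)$, an automorphism acting by $-\mathrm{id}$ on a maximal toral subalgebra of $\cg_0$ preserves each simple factor, and outerness follows from $-\mathrm{id}\notin W(\cg_0)$ in types $A_k$, $D_{2k+1}$, $E_6$. But the step you yourself flag as delicate is a genuine gap, and the repair you sketch does not close it. Kostant's theorem applied in $G$ gives $z\in Z_G(h)$ with $\Ad(z)\bigl(\omega(e),-h,\omega(f)\bigr)=(-f,-h,-e)$, but the corrected map $\omega'=\Ad(z)\circ\omega$ then acts on $x\in\hg$ by $\omega'(x)=-\Ad(z)x$; since $z$ is only required to centralize $h$, not $\tg$, the property $\omega'|_{\tg}=-\mathrm{id}$---which is the entire content of the lemma---is destroyed. (Conjugating instead gives an involution acting by $-\mathrm{id}$ on $\Ad(z)\hg$, but $\Ad(z)\tg$ need no longer lie in $\cg(\sg)$.) Nor can the defect be absorbed into ``signs and scalars of the Chevalley basis'': that freedom only replaces $\omega$ by $\Ad(t)\circ\omega$ with $t$ in the maximal torus $T$ having Lie algebra $\hg$, and when $e$ is not regular in its minimal Levi the $T$-orbits in the $(-2)$-weight space are of dimension at most $\mathrm{rank}\,\g$, strictly smaller than the single dense $Z_G(h)^\circ$-orbit containing both $\omega(e)$ and $-f$ (already so for $e$ of type $D_4(a_1)$ in $E_6$); so no torus correction achieves $\omega(e)=-f$ in general.

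What is needed is a correcting element that fixes $\tg$ pointwise, i.e.\ Kostant conjugacy inside $L:=Z_G(\tg)$, and this requires $\omega(e)\in L\cdot e$ rather than merely $\omega(e)\in G\cdot e$ ($G$-conjugacy of two elements of a Levi does not imply $L$-conjugacy). This is exactly where the paper's Bala--Carter input enters, and it is the idea missing from your argument: since $\tg$ is a maximal toral subalgebra of $\cg(\sg)$, the Levi $\mathfrak{l}=\zg_{\g}(\tg)$ is the minimal Levi containing $e$, the element $e$ is \emph{distinguished} in $\mathfrak{l}$, and $\tg=\zg(\mathfrak{l})$. Your $\omega$ preserves $\tg\subset\hg$, hence preserves $\mathfrak{l}$, and because weighted Dynkin diagrams of distinguished orbits are invariant under all diagram automorphisms, $\omega(e)\in L\cdot e$; Kostant conjugacy inside $L$ then yields $g\in L$ correcting $\omega$, and since the connected group $L$ acts trivially on its center $\tg$, the corrected involution still acts by $-\mathrm{id}$ on $\tg$. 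The paper runs this in the opposite order, which avoids your circularity entirely: fix $\hg$ and the Chevalley involution $\sigma$ first, choose a standard Levi $\mg$ with $e$ distinguished and $\sg\subset\mg$, and find $g\in M$ with $\mathrm{Int}(g)\circ\sigma$ equal to the identity on $\sg$ (so the extra factor $\Ad(\dot s)$ is not even needed); the conclusion follows because $M$ acts trivially on $\tg=\zg(\mg)$. With the distinguishedness ingredient supplied, your version of the argument goes through as well.
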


\begin{proof}
Fix a toral subalgebra $\hg$ of $\g$. Let $\sigma \in \rm Aut(\g)$ be an automorphism of $\g$ that acts by $-1$ on $\hg$.
Choose $\mg$ to be a standard Levi subalgebra of $\g$ relative to $\hg$ so that $e$ is distinguished in $\mg$ \cite{Carter}.  
Pick $\sg$ so that  $\sg \subset \mg$.
Then $\sigma(\mg) = \mg$ and since $e \in \mg$ is distinguished (and so satisfies the assumption above), there exists 
$g \in G$ (specifically in the subgroup $M$ with Lie algebra $\mg$) such that $\rm Int(g)\circ \sigma$ is the identity on $\sg$.
That is, $\phi:=\rm Int(g)\circ \sigma \in CA(\sg)$.  

Next, by \cite{Carter} the center $\tg$ of $\mg$, which lies in $\hg$, is a maximal toral subalgebra of $\cg(\sg)$.  
Since $M$ acts trivially on $\tg$, it follows that $\phi$ acts by $-1$ on $\tg$.  
In particular, $\phi(\cg_0) = \cg_0$ and $\phi$ acts by $-1$ on its maximal toral subalgebra $\cg_0 \cap \tg$.   Since 
$-1$ is not in the Weyl groups of $A_k, D_{2k+1},$ or $E_6$, the induced automorphism of $\cg_0$ must by outer (since $-w_0$ i
s then clearly outer).
\end{proof}

\begin{remark}
	We want to apply this when $-1$ is not in the Weyl group of $\g$, but it is also applies when $-1$ is in the Weyl group and $\cg(\sg)$ has simple factors of type $a_k$, where it forces $C(\sg)$ to contain an element of order two.  So in $F_4, E_7 $ and $E_8$, this explains why
	the action is upgraded to $a_k^+$ always.  There are no type $D_k$ simple factors of $\cg(\sg)$ for $k>4$ 
	in the exceptional groups, except for the sole case of $d_6$ the minimal orbit in $E_7$.  It remains (along with its dual), the only case where a natural outer action exists on the slice, but the induced action of $CA(\sg)$ does not realize it.  
\end{remark}

\begin{corollary}
Let $G = \rm Aut(\g)$.

In type $A_n$, any singularity for a minimal degeneration, which are not type $A_1$, 
acquires an outer action (that is, becomes $A_k^+$ or $a_k^+$, when $k \geq2$).

In type $E_6$, the singularities with no outer action using $C(\sg)$, all acquire the natural outer action.
\end{corollary}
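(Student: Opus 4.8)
The plan is to treat the two kinds of minimal singularities separately and, in each case, to feed the relevant simple or toral factor of $\cg(\sg)$ into the previous lemma. Recall first that by Kraft--Procesi \cite{Kraft-Procesi:GLn} every minimal degeneration in type $A_n$ has singularity either a simple surface singularity $A_k$ (when $\dim\SOe=2$) or the closure $a_k$ of a minimal orbit in $\sl_{k+1}$ (when $\dim\SOe\geq4$); when $k=1$ neither the diagram $A_1$ nor the Lie algebra $\sl_2$ admits an outer automorphism, which is exactly why these are excluded.

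For the minimal-orbit case $a_k$ with $k\geq 2$, I would invoke Corollary \ref{Corollary:simple_factors_control_singularity}: the slice is the closure of the minimal orbit in a simple factor $\cg_0\subset\cg(\sg)$, necessarily of type $A_k$. Since $\g$ is of type $A_n$, the previous lemma produces $\phi\in CA(\sg)$ stabilizing $\cg_0$ and acting by $-1$ on a maximal toral subalgebra of $\cg_0$; as $-w_0$ is outer for $A_k$, $\phi$ induces an outer involution of $\cg_0$ and hence the $+$-action on the slice, giving $a_k^+$. The identical argument applies in $E_6$ to any factor $\cg_0$ of type $A_k$, $D_{2k+1}$, or $E_6$, producing $a_k^+$, $d_{2k+1}^+$, or $e_6^+$.

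For the surface-singularity case $A_k$ with $k\geq2$, the slice is a cyclic quotient $\C^2/\ZM_{k+1}$, and I would reuse the projection argument of \S\ref{section:A_action} (the type $c$ computation): projecting $\SOe$ to $\cg(\sg)$ identifies a one-dimensional torus $\mathfrak t$, lying in the central toral subalgebra $\cg_0$ of $\cg(\sg)$, on which the diagram flip of $A_k$ acts nontrivially (by $-1$). The previous lemma, applied with $\cg_0$ the central torus, again gives $\phi\in CA(\sg)$ acting by $-1$ on $\cg_0$, hence nontrivially on $\mathfrak t$; this realizes the flip and upgrades the singularity to $A_k^+$.

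Finally, for $E_6$ one first lists, using the tables in \cite{Carter}, those minimal degenerations whose singularity admits an outer automorphism (types $A_k$, $a_k$, $D_{2k+1}$, $d_{2k+1}$, $E_6$, $e_6$) but for which $C(\sg)$ acts trivially, and then applies the two cases above. I expect the main obstacle to be precisely this bookkeeping step: one must verify that no factor $\cg_0$ of even type $D_{2k}$ (for which $-1\in W(\cg_0)$, so that the lemma yields only an inner automorphism) is responsible for any such singularity in $E_6$, which is a finite check against the list of reductive centralizers. The surface-singularity reduction---identifying the flip with $-1$ on $\mathfrak t$---is the only other place requiring care, and it is handled exactly as in the classical computation of \S\ref{section:A_action}.
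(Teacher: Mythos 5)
Your proposal is correct and follows essentially the same route as the paper: the minimal-orbit singularities ($a_k$, $e_6$) are handled by applying the preceding lemma to the relevant simple factor of $\cg(\sg)$, and the surface singularities $A_k$ are handled by the projection-to-the-center argument of \S\ref{section:A_action}, with the lemma applied to the central toral subalgebra. The only point the paper makes explicitly that you leave implicit is the case $[2a_2]^+$ in $E_6$, where one must note that $\phi$ preserves (rather than permutes) the two $A_2$ factors of $\cg(\sg)$ so that the singularity becomes $[2a_2^+]^+$; this is already guaranteed by the stabilization clause in the lemma you invoke.
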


\begin{proof}
	For the minimal orbit types, we can use the previous lemma since the simple factors 
	of $\cg(\sg)$ are all of type $A$ or $E_6$.  
	
	For the simple surface singularities, we can use the action on the center of $\cg(\sg)$ as in \S \ref{section:A_action}  (the subregular case already follows from Slodowy), since they are simple surface singularities of type $A_k$.  
	
	Finally, the case of $[2a_2]^+$ becomes $[2a^+_2]^+$ since the outer automorphism preserves each simple factor of $\cg(\sg)$.
\end{proof}

In \S \ref{Section:graphs} we include the diagram for the $D_4$ case with the $\SG_3$-action.

\section{A conjecture of Lusztig}\label{lusztig}

In \cite[\S 0.4]{Lusztig:adjacency}, Lusztig associated to each minimal special degeneration in $\g$ a certain 
Weyl group $W'$.   We describe it in the classical groups.

Let $h:=\dim(\SOe)/2+1$ for the slice $\SOe$ associated to the degeneration.
In types $B$,$C$,$D$, the slice $\SOe$ is either dimension two or the dimension of the minimal special orbit for a Lie algebra $\cg_0$ of type $B$, $C$, $D$.  The dimension of the latter is $2h'-2$ where $h'$ is the Coxeter number of $\cg_0$.  
Since $h'$ is even for types  $B_k$, $C_k$, $D_k$, respectively equal to $2k$, $2k$, $2k-2$, 
the number $h$ is even when $\g$ has type $B,C,D$.

Here is the assignment of $W'$ to the degeneration, which varies in type $D$ from Lusztig's assignment.
For $\g$ of type $A$, the Weyl group $W'$ associated to the degeneration is the Weyl group of type $A_{h-1}$; 
for $\g$ of types $B/C$, it is of type $B_{h/2}$;
and for $\g$ of type $D$, it is of type $D_{h/2}+1$ when $\SOe$ is of type $d_k$ when $G = \SO(2N)$, and it is 
of type $B_{h/2}$, otherwise.  

Lusztig worked with the associated special representations of $W$, the Weyl group of $\g$.  
Our definition of $W'$ is slightly different in type $D$.  It is necessary to include more special representations than
were given in \cite{Lusztig:adjacency} for the conjecture to hold for the case of associating $W'=W(D_{h/2}+1)$, as the
following lemma shows.  In op. cit., only the case of $s=0$ and $\nu$ with parts all equal to $1$ was considered for associating 
 $W'=W(D_{h/2}+1)$.

\begin{lemma}
Let $(\lambda, \mu)$ be a minimal special degeneration of type $d_k$ in $\g$ for $G = \SO(2N)$. 
By \S \ref{subsection:A_action_h_sing}, this occurs precisely when 
$\mu$ has a single odd part equal to $2s+1$ with even multiplicity $2j$.

Then the Springer representations of $W(D_N)$ 
attached to $\0_\lambda$ and $\0_\mu$ with the trivial local systems on $\0_\lambda$ and $\0_\mu$ 
are given respectively by the bipartitions
$$([\nu, s{+}1,s^{j}, \nu'], [\nu, (s{+}1)^{j-1}, \nu'])$$ 
and 
$$([\nu, s^{j}, \nu'], [\nu, (s{+}1)^j, \nu']),$$ 
where $\nu$ is a partition with smallest part at least $s+1$ and $\nu'$ is 
a partition with largest part at most $s$, and such that $2|\nu|+2|\nu'|+j(2s+1) = N$.

Moreover, any such bipartition corresponds to a minimal special degeneration with one odd part in $\mu$ (necessarily of even multiplicity).
\end{lemma}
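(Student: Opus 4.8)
The plan is to compute both Springer representations directly from the combinatorial algorithm for the Springer correspondence of $W(D_N)$ attached to a special orbit with the trivial local system (see \cite{Carter}, \cite{C-M}, or \cite{Spaltenstein}), which passes from a partition in $\parti^{sp}_D(2N)$ to its defect-zero symbol and reads off the associated (unordered) pair $(\alpha,\beta)$ with $|\alpha|+|\beta|=N$. Both $\0_\lambda$ and $\0_\mu$ are special, being the two ends of a special degeneration, and since each has an odd part neither is very even, so in each case the Springer representation for the trivial local system is a genuine bipartition and no doubling ambiguity occurs. By \S\ref{subsection:A_action_h_sing}, the hypothesis that $(\lambda,\mu)$ is of type $d_k$ for $G=\SO(2N)$ is precisely the statement that $\mu$ has a single odd part $2s+1$, occurring with even multiplicity $2j$, all of its remaining parts being even.

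First I would record the shape of $\mu$ in terms of $\nu,\nu',s,j$. Because the non-block parts of $\mu$ are even and occur with even multiplicity, they are exactly the parts $2\rho$ (with multiplicity $2m_{\nu\sqcup\nu'}(\rho)$) obtained by doubling the parts and multiplicities of a pair $\nu\sqcup\nu'$; the separating role of the block $(2s+1)^{2j}$ forces the parts coming from $\nu$ to satisfy $2\rho\geq 2s+2$ and those coming from $\nu'$ to satisfy $2\rho\leq 2s$, i.e. $\nu$ has smallest part $\geq s+1$ and $\nu'$ has largest part $\leq s$. The size bookkeeping $2|\nu|+2|\nu'|+j(2s+1)=N$ is then immediate. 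Feeding this partition through the symbol algorithm, the key structural point is that a part of even multiplicity contributes equally to the two rows of the symbol, so $\nu$ and $\nu'$ appear identically in both rows, whereas the odd part $2s+1$ of multiplicity $2j$ contributes asymmetrically and produces the block $s^{j}$ in one row and $(s+1)^{j}$ in the other. Reading off the bipartition gives $([\nu,s^{j},\nu'],[\nu,(s+1)^{j},\nu'])$ as claimed.

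Next I would treat $\lambda$. By the reduction in \S\ref{Subsect:h_sp} (the Kraft--Procesi type $h$ move), for $s\geq 1$ the orbit $\0_\lambda$ is obtained from $\0_\mu$ by replacing the block $(2s+1)^{2j}$ with $[\,2s{+}3,(2s{+}1)^{2j-2},2s{-}1\,]$, the even parts being unchanged, while for $s=0$ the block $1^{2j}$ is replaced by the minimal-orbit move $[\,2,2,1^{2j-4}\,]$. Applying the same algorithm, the effect on the symbol of this single minimal move is to transfer one box of value $s+1$ from the second row to the first, which changes the middle blocks from $s^{j}\,/\,(s+1)^{j}$ to $s{+}1,s^{j}\,/\,(s+1)^{j-1}$; thus the bipartition of $\0_\lambda$ is $([\nu,s{+}1,s^{j},\nu'],[\nu,(s+1)^{j-1},\nu'])$. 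As a check, $|\alpha|+|\beta|$ is preserved and equals $N$.

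For the converse, given a bipartition of the stated form I would note that the constraints that $\nu$ has smallest part $\geq s+1$ and $\nu'$ has largest part $\leq s$ pin down the data $(\nu,\nu',s,j)$ uniquely (the value $s$ and the length $j$ are read off the unique block in which the two partitions differ), and then reconstruct $\mu$ and $\lambda$ by inverting the two computations above; the forward direction then shows that $\mu\in\parti^{sp}_D(2N)$ has a single odd part $2s+1$ of even multiplicity $2j$ and that $(\lambda,\mu)$ is a minimal special degeneration of type $d_k$ with $k=j$, using the classification in \S\ref{combinatorial_classification} together with \S\ref{Subsect:h_sp}. I expect the main obstacle to be the bookkeeping inside the symbol: fixing the staircase and defect-zero normalization so that it is literally the trivial-local-system Springer representation, correctly handling the zero-padding of the partitions (and the boundary case $s=0$, where the ``$2s-1$'' part must be read as the minimal-orbit move rather than a negative part), and verifying that the parity and height constraints $(\epsilon,\epsilon')=(0,0)$ are preserved throughout, so that the computed pairs are genuinely the special Springer pairs and not pairs attached to non-trivial local systems.
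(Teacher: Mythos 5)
Your overall strategy---push both partitions through the type-$D$ symbol algorithm---is exactly the paper's proof (which consists of the single sentence ``We carried out the algorithm in \cite{Carter}''), and your treatment of $\mu$ is correct: the even parts of $\mu$ are the doubled parts of $\nu\sqcup\nu'$, and the algorithm does return $([\nu,s^{j},\nu'],[\nu,(s{+}1)^{j},\nu'])$.

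The genuine gap is your identification of $\lambda$ when $s\geq 1$. You replace the block $(2s{+}1)^{2j}$ by $[2s{+}3,(2s{+}1)^{2j-2},2s{-}1]$. That is the type $h_{sp}$ move of \S\ref{Subsect:h_sp} (a \emph{type $C$} degeneration, whose slice is $d_{n+1}/V_4$), not the type $h$ move. A degeneration of type $d_k$ is a Kraft--Procesi type $h$ degeneration: by Table \ref{codim4ormore} it is, up to adding common rows and columns, the pair $([2^2,1^{2k-4}],[1^{2k}])$, so the correct $\lambda$ replaces $(2s{+}1)^{2j}$ by $[(2s{+}2)^2,(2s{+}1)^{2j-4},(2s)^2]$; your two moves coincide with this only at $s=0$, which is why your $s=0$ case is fine. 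Concretely, for $\mu=[3^6]$ (so $N=9$, $s=1$, $j=3$) the unique special orbit covering $\mu$ is $\lambda=[4,4,3,3,2,2]$, while your recipe produces $[5,3,3,3,3,1]$, which strictly dominates it, so your pair is not a minimal special degeneration; worse, for $\mu=[4,4,3,3,3,3,2,2]$ your recipe gives $[5,4,4,3,3,2,2,1]$, which is not even special (the even part $4$ has odd height $3$).

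Because of this, your $\lambda$-side ``computation'' is an assertion pattern-matched to the statement rather than an output of the algorithm, and it does not survive contact with it. Running the algorithm on the correct $\lambda$, the passage from $\mu$ to $\lambda$ moves a \emph{single box} between the two rows---one part $s$ becomes $s{+}1$ in the row containing $s^{j}$, and one part $s{+}1$ becomes $s$ in the other row---yielding the unordered pair $\{[\nu,s{+}1,s^{j-1},\nu'],\,[\nu,(s{+}1)^{j-1},s,\nu']\}$, not a transfer of an entire part of size $s{+}1$ as you describe (the two descriptions agree precisely when $s=0$). For instance $\lambda=[4,4,3,3,2,2]$ in $D_9$ has symbol $(1,3,4\,;\,1,2,4)$, i.e.\ bipartition $\{[2,2,1],[2,1,1]\}$, whose $b$-invariant $18$ equals $\dim\mathcal{B}_e$; by contrast the whole-part-transfer answer $([2,1,1,1],[2,2])$ has $b$-invariant $20$ and fails the symbol-interlacing condition, so it is not even a special representation and cannot be the Springer representation of a special orbit. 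So for $s\geq 1$ your argument establishes neither display, and the converse (``moreover'') direction, which you build by inverting the forward computation, inherits the same defect. You should also note that for $j=2$ the correct $\lambda$ is very even, so the degeneration is excluded from the $d_k$ case; your blanket claim that neither orbit is very even is another artifact of using the wrong $\lambda$.
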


\begin{proof}
	We carried out the algorithm in \cite{Carter}.
\end{proof}

Let $\rm p'(\SOe) = \sum_i \dim(IH^{2i}(\SOe)^{A(e)})q^i$.
In classical types, \cite[Conjecture 1.4]{Lusztig:adjacency} can be interpreted as saying:
\begin{theorem}
Let $G$ be classical and assume $h>1$.
Then
$$\rm{p'}(\SOe) = q^{e_1-1}+q^{e_2-1}+\dots+q^{e_k-1}$$ 
where $e_1, e_2, \dots, e_k$ are the exponents of $W'$.
\end{theorem}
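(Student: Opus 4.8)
The plan is to verify the identity directly, case by case, by combining the classification of minimal special degenerations with the intersection-cohomology computations already assembled in \S\ref{Section:action_codim4_more}. First I would reduce to a short list of possible slices. By \S\ref{combinatorial_classification}, \S\ref{codim4_degs} and Corollary \ref{Cor:h_sp_sing}, for $G$ classical the slice $\SOe$ of a minimal special degeneration is either: (i) two-dimensional, hence a rational double point or a union of two such meeting at the cone point (the type $e$ case $[2B_n]^+$); (ii) the closure $\overline{\0}_{\rm min.sp}$ of the minimal special orbit in a simple classical $\cg_0$ of type $B_n$, $C_n$ or $D_n$ (the slices $b^{sp}_n$, $c^{sp}_n$, $d^+_n$, $d_n$); or (iii) the quotient $d_{n+1}/V_4$ coming from $h_{sp}$. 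The exceptional quotients $a_2/\SG_2$ and $d_4/\SG_4$ do not arise in classical types, so they need not be treated.

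Next I would record $\rm{p'}(\SOe)$. For a two-dimensional slice each irreducible component is a rational double point, hence a $\QM$-homology manifold whose $IH^*$ is concentrated in degree zero; as $A(e)$ permutes the (at most two) components transitively, one gets $\rm{p'}(\SOe)=1$. For the slices isomorphic to $\overline{\0}_{\rm min.sp}$ I would use that the intersection cohomology has Poincaré polynomial $\sum_i q^{e_i-1}$, the $e_i$ being the exponents of $\cg_0$ (\S\ref{Section:action_codim4_more}, following Lusztig), together with the $A(e)$-action described there: when $\cg_0$ is of type $B_n$ or $C_n$ it has no outer automorphism, $A(e)$ acts trivially, and $\rm{p'}=p$; when $\cg_0$ is of type $D_n$ and $A(e)$ acts through the outer $\SG_2$, the identity $IH^*(\overline{\0}_{\rm min.sp})^{\langle\theta\rangle}=IH^*(\overline{\0}_{\rm min.sp}/\langle\theta\rangle)$ shows the invariants select exactly the exponents of the fixed subalgebra $\cg_0'=B_{n-1}$. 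For the slice $d_{n+1}/V_4$ I would use that $C(\sg)$ acts trivially on $IH^*(d_{n+1}/V_4)$ and that $IH^*(d_{n+1}/V_4)\simeq IH^*(b^{sp}_n)$, so that $\rm{p'}$ is again the exponent sum for $B_n$.

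I would then match these against the exponents of $W'$, using that $h$ equals the Coxeter number of $\cg_0$. In the $B_n$, $C_n$ and $d_{n+1}/V_4$ cases $h=2n$ and $W'=B_{h/2}=B_n$, with exponents $1,3,\dots,2n-1$, reproducing $\rm{p'}(\SOe)=1+q^2+\dots+q^{2n-2}$; in the $d^+_n$ case $h=2n-2$ and $W'=B_{h/2}=B_{n-1}$, whose exponents $1,3,\dots,2n-3$ are exactly the $\SG_2$-invariant exponents found above; and when $G=\SO(2N)$ and the slice is of type $d_n$ with $A(e)$ acting trivially, $W'=D_{h/2+1}=D_n$ and $\rm{p'}=p$ is the full exponent sum for $D_n$. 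The two-dimensional cases give $W'=B_1$ (or $A_1$ in type $A$) with single exponent $1$, so the right-hand side is $q^0=1$. Type $A$ is entirely parallel: $\cg_0$ and $W'=A_{h-1}$ are both of type $A$ and $A(e)$ acts trivially, so $\rm{p'}=p=1+q+\dots+q^{h-2}$.

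The main obstacle is the type-$D$ bookkeeping: I must determine precisely when the slice is the folded $d^+_n$ (giving $W'=B_{n-1}$) rather than the untwisted $d_n$ (giving $W'=D_n$), and show this dichotomy is governed exactly by whether $A(e)$ acts nontrivially. This is where the preceding Lemma on Springer representations is indispensable—it pins down, by their bipartitions, the degenerations with $\mu$ having a single odd part of even multiplicity (the $d_n$-case of \S\ref{subsection:A_action_h_sing}) and justifies enlarging Lusztig's original family of special representations so that $W'=D_{h/2+1}$ is assigned to precisely these. The one genuinely geometric point underlying the whole argument is that the outer $\SG_2$ acts by the sign character on exactly the $IH$-piece indexed by the extra exponent $n-1$ of $D_n$; granting the statement of \S\ref{Section:action_codim4_more}, everything else reduces to the exponent arithmetic indicated above.
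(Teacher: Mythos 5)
Your proposal is correct and follows essentially the same route as the paper's own proof: reduce via the classification (Table \ref{codim4ormore}) to the slices $c^{sp}_{h/2}$, $b^{sp}_{h/2}$, $d^+_{h/2+1}$, $d_{h/2+1}$, $d_{h/2+1}/V_4$, compute the $A(e)$-invariant intersection cohomology using \S\ref{Section:action_codim4_more}, and match exponents of $W'$, with the type-$D$ dichotomy ($d^+_k$ versus $d_k$ for $\SO(2N)$, pinned down by the Springer-representation lemma) treated exactly as in the paper. Your write-up is merely more explicit, additionally spelling out the type-$A$ and two-dimensional cases that the paper's terse proof leaves implicit.
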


\begin{proof}
Since $h>1$, then $\dim(\SOe)\geq4$ and so by Table \ref{codim4ormore}, the singularity 
of $\SOe$ in types $B$ and $C$ is either $c^{sp}_{h/2}$, $b^{sp}_{h/2}$, $d_{{h/2}+1}^+$, or $d_{{h/2}+1}/V_4$.
Each of these satisfies
$$\rm p'(\SOe) = 1+q^{3}+\dots+q^{h-1},$$ 
by \S \ref{Section:action_codim4_more}, 
which are the exponents of $B_{h/2}$.

This also holds for $\SO(2N)$, except for the case where the singularity is $d_{{h/2}+1}$, in which
case $\rm{p'}(\SOe) = 1+q^{3}+\dots+q^{h-1}+q^{h/2},$ 
coming from the exponents will be those of $D_{{h/2}+1}$.
\end{proof}

In the exceptional groups a similar interpretation exists (except that a variation is needed for the $3$ exceptional orbits in $E_7$ and $E_8$).  That is to say, the simple factors of $\cg(\sg)$ under the action of $A(e)$ explain 
why $\rm p'(\SOe)$ sees the exponents that Lusztig observes in \cite[\S 4]{Lusztig:adjacency}.
We also point out a typo in {\it loc. cit.}:  the label of $567_{46} -- 1400_{37}$ should be $B_5$.

\section{Duality}\label{Section:duality_theorem}

We can now gather up our results to state the duality result for a minimal special degeneration $(\0,\0')$
in $\g$ and its dual minimal special degeneration $(d_{LS}(\0'),d_{LS}(\0))$ in $^L \g$, the Langlands dual Lie algebra of $\g$.

Let $X$ be the normalization of an irreducible component of the slice $\SOe$ for $(\0,\0')$, where $e \in \0'$.
Let $Y$ be an irreducible component of the slice $\mathcal S$ for  $(d_{LS}(\0'),d_{LS}(\0))$.  Let $e' \in d_{LS}(\0)$.

By \cite{Lusztig:adjacency}, we can assume that $\dim(\SOe)=2$.  This also follows
from Proposition \ref{prop:quartet_proof} and inspection of the graphs in \S \ref{Section:graphs}.
Hence, $X$ is simple surface singularity.  Denote by $\rm Out(X)$ its group of outer automorphisms, which 
are the graph automorphisms of the ADE diagram corresponding to $X$ as in \S \ref{section:A_action}.
From \cite{FJLS} and \S \ref{section:A_action}, we know that $A(e)$ acts transitively 
on the irreducible components of $\SOe$ and of $\mathcal S$.  
Let $J(e) \subset A(e)$ be the stabilizer of $X$.  Let $K(e)$ be the image of $J(e)$ in $\rm Out(X)$.

On the dual side, let $\rm Out(Y)$ be the outer automorphisms of the minimal symplectic leaf in $Y$, as discussed in \S \ref{Section:action_codim4_more}.    Let $J(e') \subset A(e')$ be the stabilizer of $Y$ and let 
$K(e')$ be the image of $J(e')$ in the $\rm Out(Y)$.

The pair of minimal degenerations falls into one of three mutually exclusive cases:
\begin{enumerate}
	\item The map from $K(e)$ to its image in $\bar{A}(e)$ is bijective and the map from 
	$K(e')$ to its image in $\bar{A}(e')$ is bijective.
	\item The map from $K(e)$ to its image in $\bar{A}(e)$ is not bijective.
	\item The map from $K(e')$ to its image in $\bar{A}(e')$ is not bijective.
\end{enumerate}
That these are mutually exclusive follows from \S \ref{canonical_quotient}.

\begin{theorem}
	 Let $G$ be of adjoint type or $G = \rm Aut(\g)$ or $G = \rm O(8)$.
	 We have the following duality of singularities under the Lusztig-Spaltenstein involution:
	 \begin{itemize}
	\item In case (1):  
	\begin{enumerate}
		\item[(a)]   If $(X, K(e))$ corresponds to a simple Lie algebra $\mg$,  in the sense of Slodowy, 
		then $Y/K(e')$ is isomorphic to the closure of the
	minimal special nilpotent orbit in the fixed subalgebra $(^L \mg)^{K(e')}$, where 
	$^L \mg$ is a simple component of the reductive centralizer of $e'$ in $^L \g$.
	\item[(b)] If the pair $(X, K(e))$ is of type $A_k^+$, then $Y/K(e')$ is isomorphic to $a_k/\SG_2$. 
	\end{enumerate}
	
	\item In case (2):  The pair $(X,K(e))$ is of type $C_{n+1}$, and $Y/K(e')$ is isomorphic to $c_n^{sp}$.
	
	\item In case (3):  The pair $(X,K(e))$ is of type $C_n$ or $G_2$, and $Y$ is 
	isomorphic to $d_{n+1}/V_4$ or $d_4/\SG_4$, respectively.  
	\end{itemize}
\end{theorem}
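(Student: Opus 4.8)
The plan is to assemble the results of the preceding sections: the trichotomy of cases is forced by the canonical-quotient analysis of \S \ref{canonical_quotient}, and the individual isomorphisms have essentially been established in \S \ref{codim4_degs}, \S \ref{section:A_action}, and \S \ref{Section:comb_proof_duality}. The hypothesis on $G$ is used to guarantee that the outer automorphisms realizing the $K(e)$-actions are present: the full orthogonal group handles type $D_n$, while $\rm Aut(\g)$ and $\rm O(8)$ supply the graph automorphisms in types $A_n$, $E_6$, and $D_4$, as in \S \ref{Section:outer_autos}. First I would reduce to the case in which $X$ is a simple surface singularity. By Lusztig's observation \cite{Lusztig:adjacency}, reproved combinatorially in Proposition \ref{prop:quartet_proof} and confirmed by the graphs of \S \ref{Section:graphs}, at least one of the two dual slices has dimension two; after interchanging $(\0,\0')$ with its dual if necessary, I may assume $\dim(\SOe)=2$, so that $X$ is a simple surface singularity and $\dim(\mathcal S)\geq 4$. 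The single exception, in which both slices have components of type $A_1$, is the self-dual case $A_1\leftrightarrow a_1$ sitting inside (1)(a) with $\mg=\mathfrak{sl}_2$. That cases (2) and (3) are mutually exclusive and each incompatible with (1) is exactly the content of \S \ref{canonical_quotient}, so it remains only to identify the singularities case by case.

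For case (1)(a), Slodowy's dictionary attaches to $(X,K(e))$ a simple Lie algebra $\mg$, as used in \S \ref{section:A_action}, while Corollary \ref{Corollary:simple_factors_control_singularity} identifies $Y$ with the closure of the minimal special orbit in a simple factor $\gg_0$ of the reductive centralizer $\cg(\sg')$ of $e'$; the theorem denotes $\gg_0$ by ${}^L\mg$. I would then check two things: that $\gg_0$ is the Langlands dual of $\mg$, and that passing to $Y/K(e')$, where $K(e')$ may act through outer automorphisms when $\gg_0$ is of type $A$, $D$, or $E_6$, realizes the minimal special orbit of the fixed subalgebra $({}^L\mg)^{K(e')}$. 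In the classical groups both points come from the quartet computation of \S \ref{Section:comb_proof_duality}: the explicit maps $f$, $d$, $d_{LS}$ there determine the dual simple factor and its type (Langlands duality exchanging $B$ and $C$ and fixing $A$ and $D$), while the dimension bookkeeping of Lemmas \ref{vertical_duals} and \ref{horizontal} matches the two sides. The quotient identification then follows from the explicit isomorphisms of \S \ref{codim4_degs} (for example $\overline{\0}_{[2^2,1^{N-2}]}/\langle\theta_1\rangle\simeq\overline{\0}_{[3,1^{N-2}]}$, realizing $d_{n+1}/\langle\theta_1\rangle\simeq b_n^{sp}$) together with the intersection cohomology computation of \S \ref{Section:action_codim4_more}. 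In the exceptional groups the same two points are verified case by case using Proposition \ref{nonminlem}, the tables of \cite{Carter}, and the action recorded in \S \ref{Section:action_codim4_more}. Case (1)(b), where $(X,K(e))$ is of type $A_k^+$, instead uses the outer automorphism constructed in \S \ref{Section:outer_autos} and the identification $Y/K(e')\simeq a_k/\SG_2$, whose sole occurrence is the $a_2/\SG_2$ singularity of \cite{FJLS}.

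Cases (2) and (3) are the two ways in which the outer action fails to descend to the canonical quotient, as isolated in \S \ref{canonical_quotient}. In case (2) the kernel of $A(e)\to\bar A(e)$ acts by outer automorphisms on $X$, which occurs precisely for the $C_{n+1}^{*}$ singularity; tracing this through $d_{LS}$ with the quartet of \S \ref{Section:comb_proof_duality} gives $Y\simeq c_n^{sp}$, and since $K(e')$ then acts trivially on $Y$ one has $Y/K(e')=Y=c_n^{sp}$. In case (3) it is instead $K(e')\to\bar A(e')$ that fails to be injective; by \S \ref{canonical_quotient} this forces $Y$ to be $d_{n+1}/V_4$ or $d_4/\SG_4$ with $(X,K(e))$ of type $C_n$ or $G_2$ respectively, and the canonical quotient obstructs the further quotient that would otherwise produce $b_n^{sp}$ or $g_2^{sp}$, so $Y$ appears undivided.

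I expect the main obstacle to lie in the bookkeeping of case (1): through the order-reversing involution $d_{LS}$ one must match the simple summand $\mg$ recorded by the $K(e)$-action on the two-dimensional slice with the Langlands-dual summand $\gg_0={}^L\mg$ governing the higher-dimensional dual slice, and simultaneously confirm that $K(e)$ and $K(e')$ encode the same cover-versus-quotient relationship on the two sides. In the classical types this is precisely what the three quartets of \S \ref{Section:comb_proof_duality} package, the delicate points being the parity conditions $l\equiv\epsilon'$ versus $l\not\equiv\epsilon'$ that separate the $g_{sp}$ and $h$ behaviors under $d$, and the separate treatment of $\SO(2N)$ in \S \ref{subsect:A_action_spec_orth}. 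In the exceptional types the obstacle reduces to the finiteness of the case check, dispatched using \cite{Carter} and \cite{FJLS}.
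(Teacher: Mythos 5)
Your proposal takes exactly the paper's route: the paper's own proof is a short assembly argument --- classical groups from \S \ref{Section:comb_proof_duality}, exceptional groups by inspection of the graphs in \S \ref{Section:graphs}, and $G=\mathrm{Aut}(\g)$ via \S \ref{Section:outer_autos} --- and your reduction to $\dim(\SOe)=2$ and the mutual exclusivity of cases (1)--(3) are precisely the remarks the paper records immediately before stating the theorem. So in structure your write-up is faithful, just more detailed.

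There is, however, one concrete error in your case (2). You claim that tracing $C_{n+1}^*$ through $d_{LS}$ ``gives $Y\simeq c_n^{sp}$, and since $K(e')$ then acts trivially on $Y$ one has $Y/K(e')=Y=c_n^{sp}$.'' That is what the second quartet of Figure \ref{quartet} gives, but the quartets are a statement about the \emph{classical} groups only, while case (2) also occurs in the exceptional groups (the $C^*$ labels in the $F_4$, $E_7$, $E_8$ graphs). There the dual slice is typically $Y\simeq a_{2n-1}$ with $K(e')$ acting by the \emph{nontrivial} outer involution: for instance in $F_4$ the degeneration $(F_4(a_1),F_4(a_2))$ of type $C_3^*$ is dual to $(A_1+\tilde A_1,\tilde A_1)$, whose singularity is $a_3^+$. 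This is exactly exception (1) of the introduction, ``$C_{n+1}\leftrightarrow c_n^{sp}$ or $a_{2n-1}^+$.'' The conclusion $Y/K(e')\simeq c_n^{sp}$ survives because $a_{2n-1}/\SG_2\simeq c_n^{sp}$, but your stated justification (trivial $K(e')$-action) is false in these cases, and the exceptional instances of case (2) must instead be read off the graphs, as the paper does. A smaller slip of the same kind: you attribute case (1)(b) to the outer automorphisms of \S \ref{Section:outer_autos}, but for the three exceptional special orbits in $E_7$ and $E_8$ the $A_k^+$-action on $X$ comes from $C(\sg)$ itself (an order-four representative, per \cite{FJLS}), not from a graph automorphism of $\g$; \S \ref{Section:outer_autos} is only what supplies this action in type $A_n$ with $G=\mathrm{Aut}(\g)$.
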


\begin{proof}
	This amounts to gathering up our results.  For the classical groups, the duality statements follow from  \S \ref{Section:comb_proof_duality}.  For the exceptional groups, it is by inspection of the graphs in 
	\S \ref{Section:graphs}.  When $G = \rm Aut(\g)$, we make use of \S \ref{Section:outer_autos}.
\end{proof}

\begin{remark}
We noticed in case (2) that the simple surface singularity $B_n$ is a two-fold cover of the simple surface singularity $C_{n+1}$.  
In case (3)  we observe that $b_n^{sp}$ is a two-fold cover of $d_{n+1}/V_4$ (as in \S \ref{Subsect:h_sp}) and that 
$g_2^{sp}$ is a four-fold cover of $d_4/\SG_4$.   
Accessing these covers would allow cases (2) and (3) to behave like the more well-behaved duality  in case (1).
\end{remark}

As we were finishing this paper, the preprint \cite{MMY:Extended_Sommers_Duality} appeared.  We expect there is some overlap
with our results, but we have not had a chance yet to understand the connection.

\newpage
\section{Graphs}\label{Section:graphs}

We include here the Hasse diagrams of the minimal special degenerations for the exceptional Lie algebras, except for the straightforward $G_2$, as well as several examples in the classical types.
We write $(Y)$ when we only know that the normalization of the Slodowy slice singularity is isomorphic to $Y$.
See \cite[\S 6.2]{FJLS} for a discussion of the component group action and branching in the exceptional types for the more complicated cases
in the graphs.  
We write $C^*_{n+1}$ to indicate when the kernel of the map to Lusztig's canonical quotient acts by outer action on $\SOe$ as in \S\ref{canonical_quotient}.   We use the notation $A_1$ in the exceptional groups, but 
in the classical groups  we use $B_1$ or $C_1$ to be consistent with Table \ref{codim2}; these are all the same singularity.

\begin{tikzpicture}
	\begin{scope}[shift={(-4,0)}]
		\matrix (B4special)
		[matrix of math nodes,
		nodes in empty cells,
		nodes={outer sep=0pt,minimum size=0pt},
		column sep={1.7cm,between origins},
		row sep={1.5cm,between origins}]
		{
			&&& | (a) | [9]  \\
			&&& | (b) | [7,1^2] \\
			&&& | (c) | [5,3,1]  \\
			&&& | (d) | [5,2^2]  \\
			&& | (e) | [5,1^4] &&  | (f) |  [3^3] \\
			&&&  | (g) | [3^2,1^3]  \\
			&&&  | (h) | [3,2^2,1^2] \\
			&&&  | (i) | [3,1^6] \\
			&&&  | (j) | [1^9]\\
		};

		\draw (a) -- (b) node[midway, right] {${\color{blue}B_4}$};
		\draw (b) -- (c) node[midway, right] {${\color{blue}C_3^*}$};
		\draw (c) -- (d) node[midway, right]{${\color{blue}C_1}$};  
		\draw (d) -- (e) node[midway, left] {${\color{blue}d_2^+}$}; 
		\draw (d) -- (f) node[midway, right=1pt]{${\color{blue}B_1}$}; 
		\draw (f) -- (g) node[midway,right=.5pt] {${\color{blue}B_1}$}; 
		\draw (e) -- (g) node[midway, below left=-3pt] {${\color{blue}C_2^*}$};
		\draw (g) -- (h) node[midway, right] {${\color{blue}C_1}$};
		\draw (h) -- (i) node[midway, right] {${\color{blue}d^+_3}$};  
		\draw (i) -- (j) node[midway, right] {${\color{blue}b^{\special}_4}$};
		
	\end{scope}
	
	\begin{scope}[shift={(4,0)}]
		\matrix (C4special)
		[matrix of math nodes,
		nodes in empty cells,
		nodes={outer sep=0pt,minimum size=0pt},
		column sep={1.7cm,between origins},
		row sep={1.5cm,between origins}]
		{
			&&& | (a) | [8]  \\
			&&& | (b) |[6,2] \\
			&&& | (c) | [4^2] \\
			&&& | (d) | [4,2^2] \\
			&& | (e) | [4,2,1^2]   && | (f) | [3^2,2]   \\
			&&& | (g) | [3^2,1^2] \\
			&&&  | (h) |  [2^4]  \\
			&&&  | (i) |  [2^2,1^4] \\
			&&&  | (j) | [1^8] \\
		};
		
		\draw (a) -- (b) node[midway, right] {${\color{blue}C_4}$};
		\draw (b) -- (c) node[midway, right] {${\color{blue}C_2}$};
		\draw (c) -- (d) node[midway, right]{${\color{blue}C_2^*}$};  
		\draw (d) -- (e) node[midway, left] {${\color{blue}C_1}$}; 
		\draw (d) -- (f) node[midway, right=1pt]{${\color{blue}C_1}$}; 
		\draw (f) -- (g) node[midway,right=.5pt] {${\color{blue}C_1}$}; 
		\draw (e) -- (g) node[midway, below left=-3pt] {${\color{blue}C_1}$};
		\draw (g) -- (h) node[midway, right] {${\color{blue}d_2^+}$};
		\draw (h) -- (i) node[midway, right] {${\color{blue}c^{\special}_2}$};  
		\draw (i) -- (j) node[midway, right] {${\color{blue}c^{\special}_4}$};

		\end{scope}
\end{tikzpicture}

\begin{tikzpicture}
	\begin{scope}[shift={(-8,0)}]
	\matrix (C6special)
	[matrix of math nodes,
	nodes in empty cells,
	nodes={outer sep=0pt,minimum size=0pt},
	column sep={1.7cm,between origins},
	row sep={1cm,between origins}]
	{
		&&& | (1) | [12]  \\
		&&& | (2) |[10,2] \\
		&&& | (3) |[8,4] \\
		&& | (4) |[8,2^2]  && | (6) |[6^2] \\
		&& | (5) |[8,2, 1^2] && | (7) |[6,4,2] \\		
		&& | (8) |[6,4,1^2] && | (11) |[5^2,2] \\
		&&  | (9) |[6,2^3] & | (12) |[5^2,1^2] &| (13) |[4^3] \\
		&& | (10) |[6,2,1^4] && | (14) |[4^2,2^2] \\		
		&& | (15) |[4^2,1^4]  & | (16) |[4,2^4] 	& | (19) |[3^4] \\		
		&& | (17) |[4,2^3,1^2]&& | (20) |[3^2,2^3] \\		
		&& | (18) |[4,2,1^6] && | (21) |[3^2,2^2,1^2] \\		
		&& | (22) |[3^2,1^6]  && | (23) |[2^6] \\		
		&&& | (24) |[2^4,1^4] \\		
		&&& | (25) |[2^2,1^8] \\	
		&&& | (26) | [1^{12}] \\	
};
	
	\draw (1) -- (2) node[midway, right] {${\color{blue}C_6}$};
	\draw (2) -- (3) node[midway, right] {${\color{blue}C_4}$};
	\draw (3) -- (4) node[inner sep=2.5pt,midway, above,scale=.95]{${\color{blue}C_2^*}$};  
	\draw (3) -- (6) node[midway, right]{${\color{blue}C_2}$};
	\draw (4) -- (5) node[midway, right]{${\color{blue}C_1}$};	  
	\draw (4) -- (7) node[midway, above]{${\color{blue}C_3}$};	  
	\draw (6) -- (7) node[midway, right]{${\color{blue}C_3^*}$};	  
	\draw (5) -- (8) node[midway, right]{${\color{blue}C_3}$};	  
	\draw (7) -- (8) node[inner sep=1.5pt,,midway, above]{${\color{blue}C_1}$};	  
	\draw (7) -- (11) node[midway, right]{${\color{blue}C_1}$};
	\draw (8) -- (12) node[midway, right]{${\color{blue}C_1}$};
	\draw (8) -- (9) node[midway, right]{${\color{blue}B_1}$};
	\draw (11) -- (12) node[midway, right]{${\color{blue}C_1}$};
	\draw (11) -- (13) node[midway, right]{${\color{blue}B_1}$};

	 \draw (9) -- (10) node[midway, right]{${\color{blue}c^{\special}_2}$};
\draw (9) -- (14) node[midway, right,scale=.8]{${\color{blue}C_2}$};
\draw (12) -- (14) node[midway, right,scale=.8]{${\color{blue}[2B_2]^+}$};
\draw (13) -- (14) node[midway, right,scale=.8]{${\color{blue}C_2}$};

	 \draw (10) -- (15) node[midway, right]{${\color{blue}C_2}$};
\draw (14) -- (15) node[inner sep=1pt, midway, above,scale=.9]{$\small{\color{blue}c^{\special}_2}$};
\draw (14) -- (16) node[inner sep=6pt,midway, right,scale=.7]{${\color{blue}d_3/V_4}$};
\draw (14) -- (19) node[midway, right,scale=.8]{${\color{blue}c^{\special}_2}$};

	 \draw (15) -- (17) node[midway, right]{${\color{blue}B_1}$};
\draw (16) -- (17) node[midway, right]{${\color{blue}C_1}$};
\draw (16) -- (20) node[midway, right]{${\color{blue}C_1}$};
\draw (19) -- (20) node[midway, right]{${\color{blue}B_1}$};

 \draw (17) -- (18) node[midway, right]{${\color{blue}c^{\special}_3}$};
\draw (20) -- (21) node[midway, right]{${\color{blue}C_1}$};
\draw (17) -- (21) node[midway, right]{${\color{blue}C_1}$};

 \draw (21) -- (23) node[midway, right]{${\color{blue}d_3^+}$};
  \draw (21) -- (22) node[inner sep=1pt,midway, above,scale=.95]{${\color{blue}c^{\special}_3}$};
  \draw (18) -- (22) node[midway, right]{${\color{blue}C_1}$};

 \draw (22) -- (24) node[midway, right]{${\color{blue}d_2^+}$};   
 \draw (23) -- (24) node[midway, right]{${\color{blue}c^{\special}_2}$};
 \draw (24) -- (25) node[midway, right]{${\color{blue}c^{\special}_4}$};
	\draw (25) -- (26) node[midway, right]{${\color{blue}c^{\special}_6}$};
	
\end{scope}
	\begin{scope}[shift={(0,0)}]
		\matrix (B6special)
		[matrix of math nodes,
		nodes in empty cells,
		nodes={outer sep=0pt,minimum size=0pt},
		column sep={1.7cm,between origins},
		row sep={1cm,between origins}]
		{
			&&& | (1) | [13]  \\
			&&& | (2) |[11,1^2] \\
			&&& | (3) |[9,3,1] \\
			&& | (4) |[9,2^2]  && | (6) |[7,5,1] \\
			&& | (5) |[9, 1^4] && | (7) |[7,3^2] \\		
			&& | (8) |[7,3,1^3] && | (11) |[5^2,3] \\
			&&  | (9) |[7,2^2,1^2] & | (12) |[5^2,1^3] &| (13) |[5,4^2] \\
			&& | (10) |[7,1^6] && | (14) |[5,3^2,1^2] \\		
			&& | (15) |[5,3,1^5]  & | (16) |[5,2^4] 	& | (19) |[3^4,1] \\		
			&& | (17) |[5,2^2,1^4]&& | (20) |[3^3,2^2] \\		
			&& | (18) |[5,1^8] && | (21) |[3^3,1^4] \\		
			&& | (22) |[3^2,1^7]  && | (23) |[3,2^4,1^2] \\		
			&&& | (24) |[3,2^2,1^6] \\		
			&&& | (25) |[3,1^{10}] \\	
			&&& | (26) | [1^{13}] \\	
		};
		
			\draw (1) -- (2) node[midway, right] {${\color{blue}B_6}$};
		\draw (2) -- (3) node[midway, right] {${\color{blue}C_5^*}$};
		\draw (3) -- (4) node[midway, right]{${\color{blue}C_1}$};  
		\draw (3) -- (6) node[midway, right]{${\color{blue}C_3}^*$};
		
			\draw (4) -- (5) node[midway, right]{${\color{blue}d_2^+}$};	  
		\draw (4) -- (7) node[midway, right]{${\color{blue}B_3}$};	  
		\draw (6) -- (7) node[midway, right]{${\color{blue}C_2}$};	  
		
			\draw (5) -- (8) node[midway, right]{${\color{blue}C_4^*}$};	  
		\draw (7) -- (8) node[midway, above]{${\color{blue}B_1}$};	  
		\draw (7) -- (11) node[midway, right]{${\color{blue}C_2^*}$};
		\draw (8) -- (12) node[midway, right]{${\color{blue}C_2^*}$};
		\draw (8) -- (9) node[midway, right]{${\color{blue}C_1}$};
		\draw (11) -- (12) node[midway, right]{${\color{blue}B_1}$};
		\draw (11) -- (13) node[midway, right]{${\color{blue}C_1}$};
		
		 \draw (9) -- (10) node[midway, right]{${\color{blue}d_3^+}$};
		 	 \draw (9) -- (14) node[midway, right]{${\color{blue}B_2}$};  
		\draw (12) -- (14) node[midway, right]{${\color{blue}C_2}$};
		\draw (13) -- (14) node[midway, right]{${\color{blue}[2B_2]^+}$};
		
		 \draw (10) -- (15) node[midway, right]{${\color{blue}C_3^*}$};
		 \draw (14) -- (15) node[midway, above, inner sep=1pt, scale=.95]{${\color{blue}b^{\special}_2}$};
		  \draw (14) -- (16) node[inner sep=2pt,midway, below,scale=.95]{${\color{blue}c^{\special}_2}$};
		  	  \draw (14) -- (19) node[midway, right,scale=.9]{${\color{blue}d_3/V_4}$};
		 
			 \draw (15) -- (17) node[midway, right]{${\color{blue}C_1}$};
		\draw (16) -- (17) node[midway, right]{${\color{blue}d_2^+}$};
			\draw (16) -- (20) node[midway, right]{${\color{blue}B_1}$};
		\draw (19) -- (20) node[midway, right]{${\color{blue}C_1}$};
		
		 \draw (17) -- (18) node[midway, right]{${\color{blue}d_4^+}$};
		\draw (20) -- (21) node[midway, right]{${\color{blue}d_2^+}$};
		\draw (17) -- (21) node[midway, right]{${\color{blue}B_1}$};

		 \draw (18) -- (22) node[midway, right]{${\color{blue}C_2^*}$};
		 \draw (21) -- (22) node[inner sep=1pt,midway, above,scale=.95]{${\color{blue}b^{\special}_3}$};
		 \draw (21) -- (23) node[midway, right]{${\color{blue}c^{\special}_2}$};
		 \draw (22) -- (24) node[midway, right]{${\color{blue}C_1}$};
		\draw (23) -- (24) node[midway, right]{${\color{blue}d^{+}_3}$};
		\draw (24) -- (25) node[midway, right]{${\color{blue}d^{+}_5}$};
		\draw (25) -- (26) node[midway, right]{${\color{blue}b^{\special}_6}$};
		
	\end{scope}
\end{tikzpicture}

\begin{figure}[H]\caption{Duality between $\parti_D^{sp}(10)$ and $\parti_C^{asp}(10)$ }
\begin{tikzpicture}
		\hspace*{-0.15\linewidth}
	\begin{scope}[shift={(-4,0)}]
		\matrix (D5special)
		[matrix of math nodes,
		nodes in empty cells,
		nodes={outer sep=0pt,minimum size=0pt},
		column sep={1.8cm,between origins},
		row sep={1cm,between origins}]
		{
			&&& | (1) | [9,1]  \\
			&&& | (2) |[7,3] \\
			&& | (3) |[7,1^3]&& | (4) |[5^2] \\
			&&& | (5) |[5,3,1^2]  \\
		&&& | (7) |[4^2,1^2] \\
			&& | (6) |[5,1^5] && | (8) |[3^3,1] \\
			 &&& | (9) |[3^2,2^2] \\
			&&& | (10) |[3^2,1^4] \\		
			&& | (11) |[3,1^7]&& | (12) |[2^4,1^2] \\
			&&& | (13) |[2^2,1^6] \\	
			&&& | (14) | [1^{10}] \\	
		};
		
		\draw (1) -- (2) node[midway, right] {${\color{blue}C_4}$};
		\draw (2) -- (3) node[midway, right] {${\color{blue}B_1}$};
		\draw (2) -- (4) node[midway, right]{${\color{blue}C_2}$};  
		\draw (3) -- (5) node[midway, right]{${\color{blue}C_3}$};
		\draw (4) -- (5) node[midway, right]{${\color{blue}B_2}$};	 
		\draw (5) -- (6) node[midway, left]{${\color{blue}b^{\special}_2}$};	  
		\draw (5) -- (7) node[midway, right]{${\color{blue}C_1}$};	  
		\draw (6) -- (10) node[midway, left]{${\color{blue}C_2}$};	  
		\draw (7) -- (8) node[midway, right]{${\color{blue}B_1}$};	  
		\draw (8) -- (9) node[midway, right]{${\color{blue}C_1}$};
		\draw (9) -- (10) node[midway, right]{${\color{blue}d_2^+}$};
		\draw (10) -- (11) node[midway, right,scale=.9]{${\color{blue}b^{\special}_3}$};
		\draw (10) -- (12) node[midway, above]{${\color{blue}c^{\special}_2}$};
		\draw (11) -- (13) node[midway, right]{${\color{blue}C_1}$};
		\draw (12) -- (13) node[midway, right]{${\color{blue}d^+_3}$};
		\draw (13) -- (14) node[midway, right]{${\color{blue}d^+_5}$};
		
		 \node [below=1cm, align=flush center,text width=8cm] at (14)
		{
			$D_5$ Minimal Special Degenerations  
		};

	\end{scope}
	
		\begin{scope}[shift={(4,0)}]
		\matrix (C5antispecial)
		[matrix of math nodes,
		nodes in empty cells,
		nodes={outer sep=0pt,minimum size=0pt},
		column sep={1.8cm,between origins},
		row sep={1cm,between origins}]
		{
			&&& | (1) | [10]  \\
			&&& | (2) |[8,2] \\
			&& | (3) |[8,1^2]&& | (4) |[6,4] \\
			&&& | (5) |[6,2^2]  \\
			&&& | (7) |[4^2,2] \\
			&& | (6) |[6,1^4] && | (8) |[4,3^2] \\
			&&& | (9) |[4,2^3] \\
			&&& | (10) |[4,2^2,1^2] \\		
			&& | (11) |[4,1^6]&& | (12) |[2^5] \\
			&&& | (13) |[2^3,1^4] \\	
			&&& | (14) | [2,1^{8}] \\	
		};
		
		\draw (1) -- (2) node[midway, right] {${\color{blue}C^*_5}$};
		\draw (2) -- (3) node[midway, right] {${\color{blue}C_1}$};
		\draw (2) -- (4) node[midway, right]{${\color{blue}C^*_3}$};  
		\draw (3) -- (5) node[midway, right]{${\color{blue}B_3}$};
		\draw (4) -- (5) node[midway, right]{${\color{blue}C_2}$};	 
		\draw (5) -- (6) node[midway, left]{${\color{blue}c^{\special}_2}$};	  
		\draw (5) -- (7) node[midway, right]{${\color{blue}C^*_2}$};	  
		\draw (6) -- (10) node[midway, left]{${\color{blue}B_2}$};	  
		\draw (7) -- (8) node[midway, right]{${\color{blue}C_1}$};	  
		\draw (8) -- (9) node[midway, right]{${\color{blue}B_1}$};
		\draw (9) -- (10) node[midway, right]{${\color{blue}C_1}$};
		\draw (10) -- (11) node[midway, right]{${\color{blue}c^{\special}_3}$};
		\draw (10) -- (12) node[midway, above]{${\color{blue}b^{\special}_2}$};
		\draw (11) -- (13) node[midway, right]{${\color{blue}B_1}$};
		\draw (12) -- (13) node[midway, right]{${\color{blue}c^{\special}_2}$};
		\draw (13) -- (14) node[midway, right]{${\color{blue}c^{\special}_4}$};
		
			 \node [below=1cm, align=flush center,text width=8cm] at (14)
		{
			$C_5$ {\bf Alternative} Minimal Special Degenerations  
		};
	\end{scope}
	
\end{tikzpicture}
\end{figure}

\begin{figure}[H]\caption{$F_4$ and inner $E_6$}
\begin{tikzpicture}
	\begin{scope}[shift={(-7,0)}]
		\matrix (F4special)
		[matrix of math nodes,
		nodes in empty cells,
		nodes={outer sep=0pt,minimum size=0pt},
		column sep={1.7cm,between origins},
		row sep={1.5cm,between origins}]
		{
			&&& | (F4) | F_4  \\
			&&& | (F4a1) | F_4(a_1) \\
			&&& | (F4a2) | F_4(a_2) \\
			&& | (B3) | B_3 && | (C3) | C_3  \\
			&&& | (F4a3) | F_4(a_3) \\
			&&  | (tA2) | \tilde{A_2}  && | (A2) | A_2  \\
			&&&  | (A1+tA1) | A_1 \! +\! \tilde{A_1} \\
			&&&  | (tA1) |  \tilde{A_1} \\
			&&&  | (0) | 0 \\
		};

		\draw (F4) -- (F4a1) node[midway, right] {${\color{blue}F_4}$};
		\draw (F4a1) -- (F4a2) node[midway, right] {${\color{blue}C_3^*}$};
		\draw (F4a2) -- (B3) node[midway, above left]{${\color{blue}A_1}$};  
		\draw (F4a2) -- (C3) node[midway, above right] {${\color{blue}A_1}$}; 
		\draw (C3) -- (F4a3) node[midway, below right]{${\color{blue}4G_2}$}; 
		\draw (B3) -- (F4a3) node[midway, below left] {${\color{blue}G_2}$}; 
		\draw (F4a3) -- (tA2) node[midway, above left=-3pt] {${\color{blue}g^{\special}_2}$};
		\draw (F4a3) -- (A2) node[midway, above right=-5pt] {${\color{blue}d_4/\mathfrak{S}_4}$};

		\draw (tA2) -- (A1+tA1) node[midway, below left] {${\color{blue}A_1}$};  
		\draw (A2) -- (A1+tA1) node[midway, below right] {${\color{blue}A_1}$};
		\draw (A1+tA1) -- (tA1)  node[midway, right] {${\color{blue}a^{+}_3}$};
		\draw (tA1) -- (0)  node[midway, right] {${\color{blue}f^{\special}_4}$};
		
	\end{scope}

	
	\begin{scope}[shift={(1,0)}]
		\matrix (E6special)
		[matrix of math nodes,
		nodes in empty cells,
		nodes={outer sep=0pt,minimum size=0pt},
		column sep={2cm,between origins},
		row sep={1cm,between origins}]
		{
			&&& | (E6) | E_6  \\
			&&& | (E6a1) | E_6(a_1) \\
			&&& | (D5) | D_5 \\
			&&& | (E6a3) | E_6(a_3)  \\
			&&& | (D5a1) | D_5(a_1) \\
			&&  | (A4+A1) | A_4 \! +\!A_1 \\
			&&&& | (D4) | D_4 \\
			&& | (A4) | A_4 \\
			&&& | (D4a1) | D_4(a_1)  \\
			&& | (A3) | A_3  \\
			&&&& | (2A2) | 2A_2 \\
			&& | (A2+2A1) | A_2+2A_1 \\
			&&&  | (A2+A1) | A_2+A_1 \\
			&&&   | (A2) | A_2 \\
			&&& | (2A1) | 2A_1 \\
			&&& | (A1) | A_1 \\
			&&&| (0) | 0 \\
		};
		\begin{scope}[]
			
			\draw (E6) -- (E6a1) node[midway, right] {${\color{blue}E_6}$};
			\draw (E6a1)--(D5) node[midway, right] {${\color{blue}A_5}$};    
			\draw (D5) --   (E6a3) node[midway,right] {${\color{blue}C_3}$}; 
			\draw (E6a3)  --  (D5a1) node[midway,right] {${\color{blue}A_2}$};  
			
			\draw (D5a1) -- (A4+A1) node[midway, above] {${\color{blue}A_2}$};
			\draw (D5a1) -- (D4) node[midway,above=2pt, right=1pt] {${\color{blue}a_2}$};  
			\draw (A4+A1) -- (A4) node[midway,left] {${\color{blue}A_1}$};
			\draw (D4) -- (D4a1) node[midway, above left=-3pt] {${\color{blue}G_2}$};
			\draw (A4) -- (D4a1) node[near start, above right=-3pt] {${\color{blue}3C_2}$};
			\draw (D4a1)  -- (A3) node[midway, above=-1pt] {${\color{blue}b^{\special}_2}$};
			\draw (D4a1)  -- (2A2) node[midway, above right=-3pt] {${\color{blue}g^{\special}_2}$};
			
			\draw (A3) -- (A2+2A1) node[midway,left] {${\color{blue}A_1}$};    
			
			\draw (2A2) -- (A2+A1) node[midway, below right=-3pt] {${\color{blue}A_2}$};   
			\draw (A2+2A1) -- (A2+A1) node[midway, below left] {${\color{blue}a_2}$};   
			\draw (A2+A1) -- (A2) node[midway,right] {${\color{blue}[2a_2]^+}$}; 
			\draw (A2) -- (2A1) node[midway, right] {${\color{blue}b^{\special}_3}$};
			\draw (2A1) -- (A1) node[midway,right] {${\color{blue}a_5}$};
			\draw (A1) -- (0) node[midway,right] {${\color{blue}e_6}$};
			
		\end{scope}
	\end{scope}
\end{tikzpicture}
\end{figure}
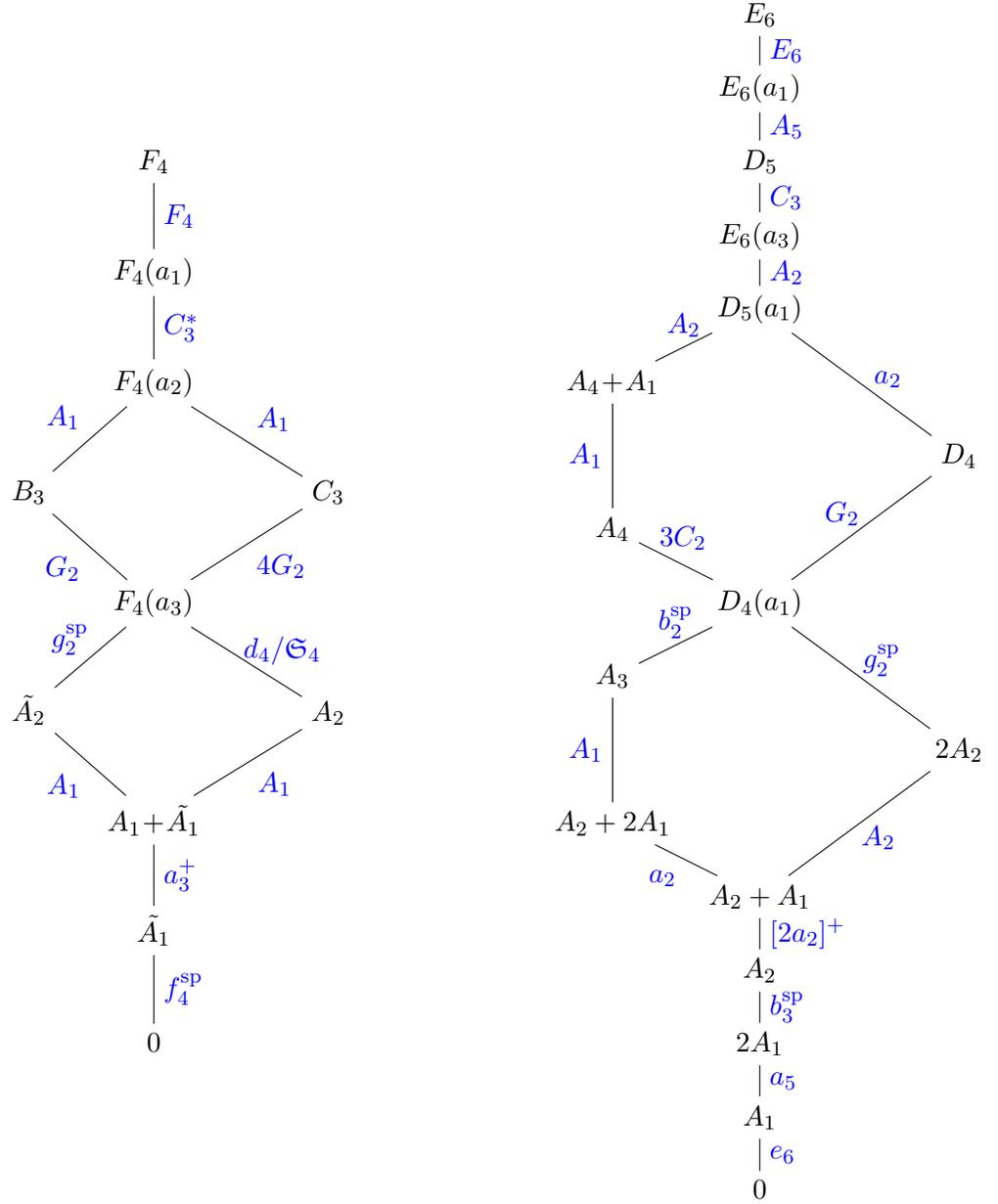

\begin{tikzpicture}
	\matrix (E7special)
	[matrix of math nodes,
	nodes in empty cells,
	nodes={outer sep=0pt,minimum size=0pt},
	column sep={2cm,between origins},
	row sep={1cm,between origins}]
	{
		&&&| (E7) |E_7 \\
		&&& | (E7a1) | E_7(a_1)  \\
		&&& | (E7a2) | E_7(a_2) \\
		&& | (E6) | E_6 &&  | (E7a3) | E_7(a_3) \\
		&&&| (E6a1) | E_6(a_1) \\ 
		&&& | (E7a4) | E_7(a_4)  \\
		& | (A6) | A_6  &&  | (D5+A1) | D_5+A_1  && | (D6a1) | D_6(a_1)\\
		& | (E7a5) | E_7(a_5) &&&& | (D5) | D_5 \\
		&&&&& | (E6a3) | E_6(a_3)  \\
		&&&&& | (D5a1+A1) | D_5(a_1)\! +\!A_1  \\
		& | (A5'') | A^{''}_5 && | (D5a1) | D_5(a_1) && | (A4+A2) | A_4 \! +\!A_2 \\
		&&&&&  | (A4+A1) | A_4 \! +\!A_1 \\
		& | (D4) | D_4 && | (A4) | A_4 && | (A3+A2+A1) | A_3 \! +\!A_2 \! +\!A_1 \\
		&&&&& | (A3+A2) | A_3 \! +\!A_2\\
		&&&&& | (D4a1+A1) | D_4(a_1) \! +\!A_1 \\
		& | (D4a1) | D_4(a_1) 
		&&&& | (A3+A1'') | (A_3+A_1)''  \\
		& | (A2+3A1) | A_2+3A_1 && | (2A2) | 2A_2  && | (A3) | A_3\\
		&&& | (A2+2A1) | A_2+2A_1 \\
		&&& | (A2+A1) | A_2+A_1 \\
		&& | (3A1'') | (3A_1)'' && | (A2) | A_2 \\
		&&& | (2A1) | 2A_1 \\
		&&& | (A1) | A_1 \\
		&&&| (0) | 0 \\
	};
	
	\begin{scope}[]

		\draw (E7) -- (E7a1) node[midway, right] {${\color{blue}E_7}$};
		\draw (E7a1) -- (E7a2) node[midway,right] {${\color{blue}D_6}$};
		\draw (E7a2) -- (E6) node[midway,above] {${\color{blue}A_1}$};  
		\draw (E7a2) -- (E7a3) node[midway, above] {${\color{blue}C_4}$};
		\draw (E6) -- (E6a1) node[midway,below] {${\color{blue}F_4}$}; 
		\draw (E7a3) -- (E6a1) node[midway,below] {${\color{blue}B_3}$};
		\draw (E6a1)--(E7a4) node[midway,right] {${\color{blue}C^*_3}$}; 
		\draw (E7a4) --(A6)  node[midway,above] {${\color{blue}A_1}$}; 
		\draw (E7a4) --   (D5+A1) node[midway,right] {${\color{blue}A_1}$}; 
		\draw (E7a4) --   (D6a1) node[midway,above right]{${\color{blue}A_1}$};
		\draw (D6a1) --   (D5) node[midway, right] {${\color{blue}A_1}$}; 
		\draw (D6a1) --   (E7a5) node[midway, below] {${\color{blue}3C_3}$};
		\draw (A6) --   (E7a5) node[midway,left]{${\color{blue}G_2}$}; 
		\draw (D5+A1) --  (D5) node[midway,below right]{${\color{blue}A_1}$}; 
		\draw (D5+A1) -- (E7a5)  node[midway,above] {${\color{blue}G_2}$};

		\draw (E7a5) --  (E6a3) node[midway,below] {${\color{blue}A_1}$};
		\draw (E7a5) --  ( A5'') node[midway,left] {${\color{blue}g^{\special}_2}$};
		
		\draw (D5) --   (E6a3) node[midway,right] {${\color{blue}C_3}$};  
		
		\draw (E6a3)  --  (D5a1+A1) node[midway,right] {${\color{blue}A_1}$};
		
		\draw (A5'') -- (A4) node[near start,above] {${\color{blue}B_3}$};
		
		\draw (D5a1+A1) -- (A4+A2) node[midway,right] {${\color{blue}A_1}$};
		\draw (D5a1+A1) -- (D5a1) node[midway,above left] {${\color{blue}A_1}$};
		\draw (D5a1) -- (D4) node[near end, below] {${\color{blue}c^{\special}_3}$}; \draw
		(D5a1) -- (A4+A1) node[midway, above] {${\color{blue}A^{+}_2}$}; \draw (A4+A2)
		-- (A4+A1) node[midway,right] {${\color{blue}A^{+}_2}$};
		
		\draw (A4+A1) -- (A3+A2+A1) node[midway,right] {${\color{blue}a_2/\mathfrak{S}_2}$};
		\draw (A4+A1) -- (A4) node[near end, above] {${\color{blue}a_2^+}$};
		
		\draw (A3+A2+A1)  -- (A3+A2) node[midway, right] {${\color{blue}A_1}$}; \draw
		(A4) -- (A3+A2) node[midway, below] {${\color{blue}C^*_2}$}; \draw (A3+A2)  --
		(D4a1+A1) node[midway,right] {${\color{blue}2A_1}$};
		
		\draw (D4) -- (D4a1) node[midway,left] {${\color{blue}G_2}$};  
		
		\draw (D4a1+A1) -- (D4a1) node[midway,above] {${\color{blue}[3A_1]^{++}}$};
		\draw (D4a1+A1) -- (A3+A1'') node[midway, right] {${\color{blue}b^{\special}_3}$};
		
		\draw (D4a1)  -- (A3) node[midway, above] {${\color{blue}b^{\special}_3}$};
		
		\draw (A3+A1'')  -- (A3) node[midway,right] {${\color{blue}A_1}$};
		\draw (A3+A1'')  -- (2A2) node[near end,below] {${\color{blue}A_1}$};
		
		\draw (D4a1) -- (2A2) node[midway, below] {${\color{blue}g^{\special}_2}$};
		\draw (D4a1) -- (A2+3A1) node[midway, left ] {${\color{blue}g^{\special}_2}$};
		
		\draw (2A2) -- (A2+2A1) node[midway, left] {${\color{blue}A_1}$};
		\draw (A3) -- (A2+2A1) node[midway,below] {${\color{blue}A_1}$};
		\draw (A2+3A1) -- (A2+2A1) node[midway,below] {${\color{blue}A_1}$};
		
		\draw (A2+2A1) -- (A2+A1) node[midway,right] {${\color{blue}a^{+}_3}$};
		\draw (A2+A1) -- (A2) node[midway,above] {${\color{blue}a^{+}_5}$};
		\draw (A2+A1) -- (3A1'') node[midway,above left] {${\color{blue}f^{\special}_4}$};
		\draw (3A1'') -- (2A1) node[midway,below] {${\color{blue}A_1}$};
		\draw (A2) -- (2A1) node[midway,below] {${\color{blue}b^{\special}_4}$};
		
		\draw (2A1) -- (A1) node[midway,right] {${\color{blue}d_6}$};
		\draw (A1) -- (0) node[midway,right] {${\color{blue}e_7}$};
		
	\end{scope}
\end{tikzpicture}

\begin{tikzpicture}
	\matrix (specialE8)
	[matrix of math nodes,
	nodes in empty cells,
	nodes={outer sep=0pt,minimum size=0pt},
	column sep={2cm,between origins},
	row sep={.75cm,between origins}]
	{
		&&&&   | (E8) | E_8 &&&\\
		&&&&  | (E8a1) | E_8(a_1) & & &\\
		&&&& | (E8a2) | E_8(a_2) \\
		&&&& |  (E8a3) | E_8(a_3) \\
		&&&& | (E8a4) | E_8(a_4) \\
		&&&& | (E8b4) | E_8(b_4)  \\
		&&& | (E7a1) | E_7(a_1) && | (E8a5) | E_8(a_5) \\
		&&&&| (E8b5) | E_8(b_5)  \\ 
		&&&& | (E8a6) | E_8(a_6) \\
		&&&&| (D7a1) | D_7(a_1) \\
		& | (E6) | E_6 &&  | (E7a3) | E_7(a_3) && | (E8b6) | E_8(b_6) \\
		&&&& | (E6a1+A1) | E_6(a_1)\!+\! A_1 \\
		&&&&& | (D7a2) | D_7(a_2) \\
		&&& | (E6a1) | E_6(a_1) && | (D5+A2) | D_5+A_2\\
		&&&& | (E7a4) | E_7(a_4) && | (A6+A1) | A_6+A_1 \\
		&&& | (D6a1) | D_6(a_1) &&| (A6) | A_6 \\
		&&| (D5) | D_5 && | (E8a7) | E_8(a_7) \\ 
		&&& | (E6a3) | E_6(a_3)   
		&& | (D4+A2) | D_4 \! + \! A_2  \\
		&&&&| (D5a1+A1) | D_5(a_1)\! +\!A_1  && | (A4+A2+A1) | A_4\! +\!A_2\! +\!A_1 \\
		&&& | (D5a1) | D_5(a_1) && | (A4+A2) | A_4 \! +\!A_2 \\
		&&&&& | (A4+2A1) | A_4 \! +\!2A_1 \\
		&&&& | (A4+A1) | A_4 \! +\!A_1 \\
		& | (D4) | D_4 &&  | (A4) | A_4 && | (D4a1+A2) | D_4(a_1) \! +\! A_2 \\
		&&&& | (A3+A2) | A_3 \! +\!A_2 \\
		&&&& | (D4a1+A1) | D_4(a_1) \! +\!A_1 \\
		&&&& | (D4a1) | D_4(a_1) \\ 
		&&& | (A3) | A_3  
		&& | (2A2) | 2A_2 \\
		&&&& | (A2+2A1) | A_2+2A_1 \\
		&&&& | (A2+A1) | A_2+A_1 \\
		&&&& | (A2) | A_2 \\
		&&&& | (2A1) | 2A_1 \\
		&&&& | (A1) | A_1 \\
		&&&& | (0) | 0 \\
	};
	\begin{scope}[]
		\draw (E8) -- (E8a1) node[midway,right] {${\color{blue}E_8}$};
		\draw (E8a1) -- (E8a2) node[midway,right] {${\color{blue}E_7}$};
		\draw (E8a2) -- (E8a3) node[midway,right] {${\color{blue}C_6}$};
		\draw (E8a3) -- (E8a4) node[midway,right] {${\color{blue}F_4}$};
		\draw (E8a4) -- (E8b4) node[midway,right,scale=.92] {${\color{blue}C_4^*}$};
		\draw (E8b4) -- (E7a1) node[midway, above left=-3pt] {${\color{blue}A_1}$};   
		\draw (E8b4) -- (E8a5) node[midway,above right=-3pt] {${\color{blue}C_3}$};  
		\draw (E7a1) -- (E8b5) node[midway, below left=-3pt] {${\color{blue}3(C_5)}$};  
		\draw (E8a5) -- (E8b5) node[midway,below right=-3pt] {${\color{blue}G_2}$};
		\draw (E8b5) -- (E6) node[midway, above] {${\color{blue}g^{\special}_2}$};   
		\draw (E8b5) -- (E8a6) node[midway,right] {${\color{blue}(G_2)}$};
		
		\draw (E8a6) -- (D7a1) node[midway,right,scale=.9] {${\color{blue}C_2^*}$};
		
		\draw (D7a1) -- (E7a3) node[midway,above left=-3pt] {${\color{blue}A_1}$};  
		\draw (D7a1) -- (E8b6) node[midway,above right=-3pt] {${\color{blue}\mu}$};
		
		\draw (E6) -- (E6a1) node[midway, above right=-3pt] {${\color{blue}F_4}$};
		\draw (E7a3) -- (E6a1+A1) node[midway,above right=-4pt] {${\color{blue}(A^{+}_4)}$}; 
		\draw (E8b6) -- (E6a1+A1) node[midway,above left=-4pt] {${\color{blue}A^{+}_2}$};  
		
		\draw (E6a1+A1)--(E6a1) node[midway,above left=-4pt] {${\color{blue}a^{+}_2}$}; 
		\draw (E6a1+A1)--(D7a2) node[midway,below left=-3pt] {${\color{blue}(A_2^+)}$};
		\draw (D7a2) --  (D5+A2) node[midway,right,scale=.9] {${\color{blue}(C_2)^*}$};
		
		\draw (E6a1)--(E7a4) node[midway,above right=-3pt,scale=.92] {${\color{blue}C_3^*}$};  
		\draw (D5+A2)--(E7a4) node[midway, above left=-4pt] {${\color{blue}A_1}$};   
		\draw (D5+A2)--(A6+A1) node[midway, above right=-4pt] {${\color{blue}A_1}$}; 
		
		\draw (E7a4) --   (D6a1)   node[midway, above left=-4pt] {${\color{blue}[2A_1]^+}$}; 
		\draw (E7a4) --   (A6)  node[midway, above right=-4pt] {${\color{blue}A_1}$}; 
		\draw (A6+A1) --  (A6) node[midway, below right=-3pt] {${\color{blue}A_1}$}; 
		\draw (D6a1) --   (D5) node[midway, above left=-4pt] {${\color{blue}b^{\special}_3}$};  
		\draw (D6a1) --   (E8a7) node[midway, above right=-4pt] {${\color{blue}10G_2}$};
		\draw (A6) --   (E8a7) node[midway, above left=-4pt] {${\color{blue}5G_2}$};
		\draw (E8a7) --  (E6a3) node[midway, above left=-3pt] {${\color{blue}g^{\special}_2}$}; 
		\draw (E8a7) --  (D4+A2) node[midway, above right=-5pt] {${\color{blue}d_4/\mathfrak{S}_4}$};
		
		\draw (D5) --   (E6a3) node[midway, above right=-3pt] {${\color{blue}C_3}$};

		\draw (E6a3)  --  (D5a1+A1) node[midway, above right=-3pt] {${\color{blue}A_1}$};

		\draw (D4+A2) -- (A4+A2+A1)node[midway, above right=-3pt] {${\color{blue}A_1}$};  
		\draw (D4+A2) -- (D5a1+A1) node[midway, above left=-3pt] {${\color{blue}A_1}$};
		
		\draw (A4+A2+A1) -- (A4+A2) node[midway, below right=-3pt] {${\color{blue}A_1}$}; 
		\draw (D5a1+A1) -- (A4+A2) node[midway, below left=-3pt] {${\color{blue}A_1}$};  
		\draw (D5a1+A1) -- (D5a1) node[midway, below right=-4pt] {${\color{blue}a^{+}_3}$};
		\draw (D5a1) -- (D4) node[midway,above] {${\color{blue}f^{\special}_4}$};
		\draw (D5a1) -- (A4+A1) node[midway, above right=-4pt] {${\color{blue}A^{+}_2}$}; 
		\draw (A4+A2) -- (A4+2A1) node[midway,right] {${\color{blue}A_1}$};
		
		\draw (A4+2A1) -- (A4+A1) node[midway, above left=-3pt] {${\color{blue}a^{+}_2}$};
		
		\draw (A4+A1) -- (D4a1+A2) node[midway, above right=-3pt] {${\color{blue}a^{+}_2}$};
		\draw (A4+A1) -- (A4) node[midway, above left=-3pt] {${\color{blue}a^{+}_4}$};
		\draw (D4a1+A2) -- (A3+A2) node[midway,below right=-3pt] {${\color{blue}b^{\special}_2}$};
		
		\draw (A4) -- (A3+A2) node[midway,below] {${\color{blue}C_2^*}$};  
		
		\draw (A3+A2)  -- (D4a1+A1) node[midway,right,scale=.9] {${\color{blue}[3A_1]^{++}}$};
		\draw (D4) -- (D4a1) node[midway,right] {${\color{blue}G_2}$};  
		
		\draw (D4a1+A1) -- (D4a1) node[midway,right=-1pt,scale=.89] {${\color{blue}d^{++}_4}$};

		\draw (D4a1)  -- (2A2) node[midway, above right=-3pt] {${\color{blue}[2g^{\special}_2}]^+$};
		\draw (D4a1)  -- (A3) node[midway, above left=-4pt] {${\color{blue}b^{\special}_5}$};
		
		\draw (2A2) -- (A2+2A1) node[midway, below right=-3pt] {${\color{blue}b^{\special}_3}$};
		\draw (A3) -- (A2+2A1) node[midway,below left] {${\color{blue}A_1}$}; 
		\draw (A2+2A1) -- (A2+A1) node[midway,right,scale=.89] {${\color{blue}a^{+}_5}$};
		\draw (A2+A1) -- (A2) node[midway,right] {${\color{blue}e^{+}_6}$};
		\draw (A2) -- (2A1) node[midway,right] {${\color{blue}b^{\special}_6}$};
		\draw (2A1) -- (A1) node[midway,right] {${\color{blue}e_7}$};
		\draw (A1) -- (0) node[midway,right] {${\color{blue}e_8}$};
		
	\end{scope}
\end{tikzpicture}

\begin{figure}[H]\caption{Minimal Special Degenerations  in $D_4$ with $S_3$ outer action} \label{D4_nilcone_with_S3_action}
	\begin{tikzpicture}
		\hspace*{-0.15\linewidth}
		\begin{scope}[shift={(-4,0)}]
			\matrix (D4special)
			[matrix of math nodes,
			nodes in empty cells,
			nodes={outer sep=0pt,minimum size=0pt},
			column sep={1.8cm,between origins},
			row sep={1cm,between origins}]
			{
				&&& | (1) | [7,1]  \\
				&&& | (2) |[5,3] \\
				&&& | (3) | [5,1^3] \\
				&&& | (5) |[3^2,1^2]  \\
				&&&  | (6) |[3,1^5] \\
				&&& | (8) |[2^2,1^4] \\
				&&& | (9) | [1^{8}] \\	
			};
			

			\draw (1) -- (2) node[midway, right] {${\color{blue}G_2}$};
			\draw (2) -- (3) node[midway, right] {${\color{blue}A_1}$};
			\draw (3) -- (5) node[midway, right, scale=.85]{${\color{blue}[3C_2]^{++}}$};
			\draw (5) -- (6) node[midway, right, scale=.85]{${\color{blue}c^{\special}_2}$};	  
			\draw (6) -- (8) node[midway, right,scale=.85]{${\color{blue}[3A_1]^{++}}$};	  
			\draw (8) -- (9) node[midway, right]{${\color{blue}d_4^{++}}$};
			
		\end{scope}

	\end{tikzpicture}
\end{figure}
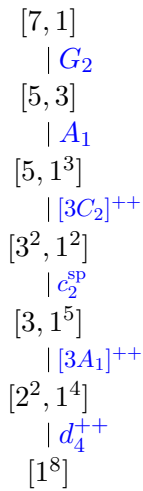

\bibliographystyle{myalpha}
\bibliography{fjls}

\def\cprime{$'$}
\begin{thebibliography}{MBMY23}

\bibitem[Ale79]{Alexeevsky}
A.~V. Alekseevski{\u\i}.
\newblock Component groups of centralizers of unipotent elements in semisimple
  algebraic groups.
\newblock {\em Akad. Nauk Gruzin. SSR Trudy Tbiliss. Mat. Inst. Razmadze},
  62:5--27, 1979.
\newblock Collection of articles on algebra, 2.

\bibitem[Car93]{Carter}
R.~W. Carter.
\newblock {\em Finite groups of {L}ie type}.
\newblock Wiley Classics Library. John Wiley \& Sons Ltd., Chichester, 1993.
\newblock Conjugacy classes and complex characters, Reprint of the 1985
  original, A Wiley-Interscience Publication.

\bibitem[CM93]{C-M}
D.~H. Collingwood and W.~M. McGovern.
\newblock {\em Nilpotent Orbits in Semisimple Lie Algebras}.
\newblock Van Nostrand Reinhold Co, New York, 1993.

\bibitem[FJLS17]{FJLS}
B.~Fu, D.~Juteau, P.~Levy, and E.~Sommers.
\newblock Generic singularities of nilpotent orbit closures.
\newblock {\em Adv. Math.}, 305:1--77, 2017.

\bibitem[FJLS23]{Fu-Juteau-Levy-Sommers:GeomSP}
B.~Fu, D.~Juteau, P.~Levy, and E.~Sommers.
\newblock Local geometry of special pieces of nilpotent orbits.
\newblock arXiv:2308.07398, 2023.

\bibitem[Fu07]{Fu:wreath}
B.~Fu.
\newblock Wreath products, nilpotent orbits and symplectic deformations.
\newblock {\em Internat. J. Math.}, 18(5):473--481, 2007.

\bibitem[HKK23]{Hanany:Coulomb_branches}
A.~Hanany, R.~Kalveks, and G.~Kumaran.
\newblock {$SU(n)$} hyper-{K}\"{a}hler quotients of {$3d$}{ $\mathcal N = 4$}
  {C}oulomb branches and quiver subtraction.
\newblock 2023.
\newblock https://doi.org/10.48550/arXiv.2308.05853.

\bibitem[KP81]{Kraft-Procesi:GLn}
H.~Kraft and C.~Procesi.
\newblock Minimal singularities in {${\rm GL}_{n}$}.
\newblock {\em Invent. Math.}, 62(3):503--515, 1981.

\bibitem[KP82]{Kraft-Procesi:classical}
H.~Kraft and C.~Procesi.
\newblock On the geometry of conjugacy classes in classical groups.
\newblock {\em Comment. Math. Helv.}, 57(4):539--602, 1982.

\bibitem[KP89]{Kraft-Procesi:special}
H.~Kraft and C.~Procesi.
\newblock A special decomposition of the nilpotent cone of a classical lie
  algebra.
\newblock {\em Ast\'erisque}, 173-174:271--279, 1989.

\bibitem[LT11]{Lawther-Testerman}
R.~Lawther and D.~M. Testerman.
\newblock Centres of centralizers of unipotent elements in simple algebraic
  groups.
\newblock {\em Mem. Amer. Math. Soc.}, 210(988):188, 2011.

\bibitem[Lus79]{Lusztig:special}
G.~Lusztig.
\newblock A class of irreducible representations of a {W}eyl group.
\newblock {\em Indag. Math.}, 41:323--335, 1979.

\bibitem[Lus22]{Lusztig:adjacency}
G.~Lusztig.
\newblock Adjacency for special representations of a {W}eyl group.
\newblock {\em Bull. Inst. Math. Acad. Sin. (N.S.)}, 17(2):125--141, 2022.

\bibitem[MBMY23]{MMY:Extended_Sommers_Duality}
L.~Mason-Brown, D.~Matvieievskyi, and S.~Yu.
\newblock Unipotent representations of complex groups and extended sommers
  duality.
\newblock arXiv:2309.14853, 2023.

\bibitem[Slo80]{Slodowy:book}
P.~Slodowy.
\newblock {\em Simple singularities and simple algebraic groups}, volume 815 of
  {\em Lecture Notes in Mathematics}.
\newblock Springer, Berlin, 1980.

\bibitem[Som98]{Sommers:Bala-Carter}
E.~Sommers.
\newblock A generalization of the {B}ala-{C}arter theorem for nilpotent orbits.
\newblock {\em Internat. Math. Res. Notices}, 1998(11):539--562, 1998.

\bibitem[Som01]{Sommers:Duality}
E.~Sommers.
\newblock Lusztig's canonical quotient and generalized duality.
\newblock {\em J. Algebra}, 243:790--812, 2001.

\bibitem[Spa82]{Spaltenstein}
N.~Spaltenstein.
\newblock {\em Classes unipotentes et sous-groupes de {B}orel}, volume 946 of
  {\em Lecture Notes in Mathematics}.
\newblock Springer-Verlag, Berlin, 1982.

\end{thebibliography}

\end{document}